\documentclass[]{amsart}
\usepackage[utf8]{inputenc}
\usepackage{graphicx}

\DeclareUnicodeCharacter{2212}{-}

\usepackage{amsmath, amssymb}
\usepackage{amsthm}
\usepackage{enumerate}
\usepackage{mathtools}
\usepackage{tikz-cd}
\usepackage{comment}
\usepackage{color}
\usepackage{hyperref}

\newcommand{\ZZZ}{\operatorname{Z}}

\newcommand{\GL}{\operatorname{GL}}

\newcommand{\NS}{\operatorname{NS}}

\newcommand{\p}{\operatorname{\mathbb{P}}}
\newcommand{\C}{\operatorname{\mathbb{C}}}
\newcommand{\Z}{\operatorname{\mathbb{Z}}}
\newcommand{\h}{\operatorname{\mathbb{H}^{\infty}}}

\newcommand{\s}{\mathcal{S}}

\newcommand{\A}{\operatorname{\mathbb{A}}}

\newcommand{\R}{\operatorname{\mathbb{R}}}
\newcommand{\F}{\operatorname{\mathbb{F}}}
\newcommand{\kk}{{\bf k}}
\newcommand{\out}{\operatorname{Out}}

\newcommand{\ZZ}{\mathcal{Z}}
\newcommand{\id}{\operatorname{id}}

\newcommand{\car}{\operatorname{char}}
\newcommand{\cent}{\operatorname{Cent}}

\newcommand{\norm}{\operatorname{Norm}}

\newcommand{\aut}{\operatorname{Aut}}

\newcommand{\Bir}{\operatorname{Bir}}
\newcommand{\Cr}{\operatorname{Cr}}
\newcommand{\PGL}{\operatorname{PGL}}
\newcommand{\SL}{\operatorname{SL}}

\newcommand{\J}{\mathcal{J}}

\newcommand{\B}{\mathcal{B}}

\newcommand{\arccosh}{\operatorname{arccosh}}

\def\dashmapsto{\mapstochar\dashrightarrow}
\newcommand{\Ax}{\operatorname{Ax}}

\def\dashmapsto{\mapstochar\dashrightarrow}



\usepackage[shortlabels]{enumitem}
\setlist[enumerate]{label=\rm{(\arabic*)}}
\setlist[enumerate,2]{label=\rm({\it\roman*})}
\setlist[itemize]{label=\raisebox{0.25ex}{\tiny$\bullet$}}

\theoremstyle{plain}
\newtheorem{theorem}{Theorem}[section]
\newtheorem{lemma}[theorem]{Lemma}

\newtheorem{corollary}[theorem]{Corollary}

\theoremstyle{definition}
\newtheorem{definition}{Definition}[section]

\newtheorem{example}[theorem]{Example}

\newtheorem{remark}{Remark}[section]

%


\begin{document}

\author{Christian Urech}
\subjclass[2010]{14E07; 14L30; 32M05} 
\keywords{Cremona group, Tits alternative, solvable groups}


\address{Office 682, Huxley Building\\
	Mathematics Department\\
	Imperial College London\\
	180 Queen's Gate\\
	London SW7 2AZ, UK }
\email{christian.urech@gmail.com}
\thanks{The author gratefully acknowledges support by the Swiss National Science Foundation Grant "Birational Geometry"  PP00P2 128422 /1 as well as by the Geldner-Stiftung, the FAG Basel, the Janggen P\"ohn-Stiftung and the State Secretariat for Education,
	Research and Innovation of Switzerland}

\title{Subgroups of elliptic elements of the Cremona group}
\date{February 2018}

\maketitle
\begin{abstract}
	The Cremona group is the group of birational transformations of the complex projective plane. In this paper we classify its subgroups that consist only of elliptic elements using elementary model theory. This yields in particular a description of the structure of torsion subgroups. As an appliction, we prove the Tits alternative for arbitrary subgroups of the Cremona group, generalizing a result of Cantat. We also describe solvable subgroups of the Cremona group and their derived length, refining results from D\'eserti.
\end{abstract}
\tableofcontents

\section{Introduction and results}
\subsection{Subgroups of elliptic elements}
To a complex projective surface $S$ one can associate its group of birational transformations $\Bir(S)$. This group is particularly rich and interesting when $S$ is rational. In this case it is isomorphic to the {\it Cremona group} $\Cr_2(\C)\coloneqq \Bir(\p^2_{\C})$. 

Groups of birational transformations of surfaces have been studied for more than 150 years and much progress has been achieved in the last decades. One of the key techniques for studying their group theoretical properties has been an action by isometries on an infinite dimensional hyperbolic space $\h(S)$, which we can associate to every projective surface $S$. Recall that there are three types of isometries of hyperbolic spaces:
\begin{itemize}
	\item {\it elliptic isometries}, which are the isometries that fix a point in $\h(S)$,
	\item {\it parabolic isometries}, which are the isometries that do not fix any point in $\h(S)$, but fix exactly one point in the boundary $\partial\h(S)$,
	\item {\it loxodromic isometries}, which are the isometries that do not fix any point in $\h(S)$, but fix exactly two points in $\partial\h(S)$.
\end{itemize}
We call an element $f\in\Bir(S)$ {\it elliptic, parabolic} or {\it loxodromic}, if the isometry of $\h(S)$ induced by $f$ is elliptic, parabolic or loxodromic respectively. This notion is linked to the dynamical behavior of $f$. Let $H$ be an ample divisor on $S$, the {\it degree}  $\deg_H(f)\in\Z_+$ of $f$ with respect to $H$ is defined by
\[
\deg_H(f)=f^*H\cdot H,
\]
where $f^*H$ is the total transform of $H$ under $f$. An element $f\in\Bir(S)$ is elliptic if and only if there exists an ample divisor $H$ on $S$ such that the sequence $\{\deg_H(f^n)\}_{n\in\Z_+}$ is bounded and in this case the boundedness also holds for any other ample divisor (see Theorem \ref{dim2}).  By $\deg(f)$ we denote the degree of $f$ with respect to the class of a line in $\p^2$.

In this paper we consider subgroups consisting only of elliptic elements - a class of subgroups that has not been understood very well so far.

\begin{definition}
	A group $G\subset\Bir(S)$ is a {\it group of elliptic elements} if every element in $G$ is elliptic.
\end{definition}

A particular case of subgroups of elliptic elements are bounded subgroups:

\begin{definition}
	A group $G\subset \Bir(S)$ is {\it bounded} if there exists a constant $K$ such that $\deg(g)\leq K$ for all elements $g\in G$. 
\end{definition}

In \cite{MR2504924}, Blanc showed that every bounded subgroup is contained in a maximal bounded subgroup and gave a full classification of maximal bounded subgroups of $\Cr_2(\C)$ (these are exactly the maximal {\it algebraic subgroups}, see Section \ref{maxalgsubgroups}). However, not all groups of elliptic elements in $\Cr_2(\C)$ need to be bounded, as the following two examples show:

\begin{example}\label{presfibration}
	Let $G\subset\Cr_2(\C)$ be the group of elements of the form $(x, y+p(x))$, where $p(x)\in\C(x)$ is a rational function. Then every element in $G$ is elliptic, but $G$ contains elements of arbitrarily high degrees. 
\end{example}

\begin{example}\label{wright}
	In \cite{MR540400}, Wright constructs examples of torsion subgroups of $\aut(\A^2)$, and hence in particular of $\Cr_2(\C)$, that contain elements of arbitrarily high degree. In fact, he shows that there is a subgroup $G$ of $\Cr_2(\C)$ that is isomorphic to the subgroup of roots of unity in $\C^*$ but that is not bounded. In \cite{MR1860869}, Lamy shows that some of the examples of Wright do not preserve any fibration. 
\end{example}

The group in Example \ref{presfibration} preserves a rational fibration and the group in Example~\ref{wright} is a  torsion group. Our main result shows that every subgroup of elliptic elements of $\Cr_2(\C)$ that is not bounded has one of these properties:

\begin{theorem}\label{main}
	Let $G\subset\Cr_2(\C)$ be a subgroup of elliptic elements. Then one of the following is true:
	\begin{enumerate}
		\item $G$ is a bounded subgroup;
		\item $G$ preserves a rational fibration;
		\item $G$ is a torsion subgroup.
	\end{enumerate}
\end{theorem}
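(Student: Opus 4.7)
The plan is to study the isometric action of $G$ on the infinite-dimensional hyperbolic Picard--Manin space $\h = \h(\p^2)$. Since every element of $G$ is elliptic, $G$ contains no parabolic or loxodromic isometries. A standard dichotomy for isometric actions on a hyperbolic space in the absence of loxodromic elements then produces two cases: either $G$ has a global fixed point in $\h$, or $G$ fixes a unique point on the ideal boundary $\partial \h$.

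If $G$ admits a global fixed point $v \in \h$, I would argue that $G$ is bounded. Up to passing to a birational model, $v$ is represented by a nef class on some rational surface $S$, and $G$ preserves the integer bilinear form restricted to the orthogonal complement of $v$. Combined with Blanc's classification of maximal bounded (equivalently, algebraic) subgroups of $\Cr_2(\C)$ recalled in Section \ref{maxalgsubgroups}, this forces $\deg(g)$ to be uniformly bounded for $g \in G$, yielding alternative (1).

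If instead $G$ fixes a boundary point $\xi \in \partial \h$, the outcome depends on whether $\xi$ is \emph{rational}---that is, whether $\xi$ is proportional to the class of a fibre of some pencil of rational curves on a birational model of $\p^2$---or irrational. In the rational case the associated pencil is $G$-invariant, so $G$ preserves a rational fibration, giving alternative (2).

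The main obstacle is the case where the fixed boundary point $\xi$ is irrational. Here the plan is to deploy the elementary model-theoretic tools advertised in the abstract. The idea is that an element $g \in G$ of infinite order fixing an irrational $\xi$ should, via a compactness or ultraproduct argument applied to the first-order theory of $\Cr_2$ over algebraically closed fields, produce either a $G$-invariant refinement of $\xi$ to a rational direction (contradicting irrationality) or a forced growth of $\deg(g^n)$ (contradicting ellipticity of $g$). Extracting such a torsion witness uniformly across $G$ from the $G$-fixed irrational boundary point is the technical heart of the proof; once this is achieved, $G$ is a torsion subgroup and we land in alternative (3).
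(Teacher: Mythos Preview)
Your reduction to the irrational-boundary-point case is correct in outline, though the fixed-point result you call ``standard'' is not a general fact about hyperbolic spaces: in the paper it is Theorem~\ref{fixnotlox}, and its proof already uses Cantat's result on finitely generated elliptic subgroups (Theorem~\ref{cantat}). The bounded case is also simpler than you suggest --- Lemma~\ref{fixelliptic} is a two-line degree estimate, with no appeal to nef classes or to Blanc's classification. The genuine gap is your treatment of the irrational case. You have misread the abstract: the model theory (Malcev's compactness theorem) is used to prove Theorem~\ref{torsionthm}, not Theorem~\ref{main}. The proof of Theorem~\ref{main} contains no compactness or ultraproduct argument, and your sketch --- invoking ``the first-order theory of $\Cr_2$'' to force either a rational refinement of $\xi$ or degree growth of $g^n$ --- does not describe any recognizable mechanism. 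There is no evident first-order language in which ellipticity of an element or irrationality of a boundary class is expressible, and no compactness statement that would link an individual infinite-order element to a $G$-invariant rational fibration.

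What the paper actually does in this case is entirely algebro-geometric. From Lemma~\ref{boundedorfib} one knows that every finitely generated $\Gamma\subset G$ is bounded, hence its Zariski closure $\overline{\Gamma}$ is an algebraic subgroup. Set $n=\sup_\Gamma\dim\overline{\Gamma}$. If $n=0$ then $G$ is torsion. If $n=\infty$, a $\Gamma$ with $\dim\overline{\Gamma}\ge 9$ preserves a \emph{unique} rational fibration (Lemma~\ref{uniquefib}), which is then forced to be $G$-invariant. If $0<n<\infty$, maximality of $n$ shows that $G$ normalizes the connected component $\overline{\Gamma}^0$ of a suitable $\overline{\Gamma}$; one then runs a structure analysis on this normalized algebraic group: if it is semisimple, $G$ is bounded (Lemma~\ref{semisimple}); otherwise the last term $R_0$ of the derived series of its radical is a $G$-normalized connected abelian algebraic group, and the orbit structure of $R_0$ on a regularizing surface forces $G$ to be bounded or to preserve a rational fibration (using Lemmas~\ref{centofc} and~\ref{monomialbounded}). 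Note in particular that the irrational-$\xi$ case does \emph{not} always terminate in alternative~(3); the argument can circle back to (1) or (2). The key idea you are missing is this passage through Zariski closures to produce a positive-dimensional algebraic subgroup normalized by all of $G$.
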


In the case of non-rational surfaces, an analogue to Theorem \ref{main} is not hard to prove. We will show:

\begin{theorem}\label{nonrational}
	Let $S$ be a non-rational complex projective surface and $G\subset\Bir(S)$ a subgroup of elliptic elements. Then $G$ is bounded or $G$ preserves a rational fibration.
\end{theorem}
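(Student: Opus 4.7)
The plan is to split on the Kodaira dimension $\kappa(S)$, treating $\kappa(S)=-\infty$ and $\kappa(S)\geq 0$ separately, and in each case to use structural facts from the Enriques--Kodaira classification to reduce the problem to a finite-dimensional group-theoretic statement.

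If $\kappa(S)=-\infty$ and $S$ is non-rational, then $S$ is birationally ruled over a smooth curve $C$ of genus $g(C)\geq 1$, and I would argue that the ruling $\pi\colon S\dashrightarrow C$ is the unique rational fibration on $S$ up to an automorphism of the base. Indeed, any rational fibration $\phi\colon S\dashrightarrow B$ has general fiber $\p^1$, and the restriction of $\pi$ to such a fiber is constant because every morphism $\p^1\to C$ is constant when $g(C)\geq 1$; hence $\pi$ factors through $\phi$, and connectedness of a general fiber of $\pi$ forces the induced map $B\dashrightarrow C$ to be birational. Applying this observation to $\pi\circ f$ for any $f\in\Bir(S)$ gives $\pi\circ f=\alpha\circ\pi$ for some $\alpha\in\aut(C)$, so $\Bir(S)$ preserves $\pi$ and in particular $G$ preserves a rational fibration.

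If $\kappa(S)\geq 0$, then $S$ admits a unique minimal model $S_{\min}$ and $\Bir(S)=\Bir(S_{\min})=\aut(S_{\min})$ by the classification of surfaces. After replacing $S$ by $S_{\min}$, the problem becomes showing that every subgroup $G\subset\aut(S)$ of elliptic elements is bounded. I would exploit the pullback representation $\rho\colon\aut(S)\to\GL(\NS(S))$: since $\deg_H(f)=f^*H\cdot H$ depends only on $\rho(f)$, it is enough to show that $\rho(G)$ is finite.

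The heart of the argument, which I expect to be the main obstacle, consists in two claims. First, $\rho(f)$ has finite order for each elliptic $f\in\aut(S)$: by Theorem~\ref{dim2} the sequence $\{(f^*)^nH\cdot H\}_n$ is bounded; the classes $(f^*)^nH$ are ample with common self-intersection $H^2$; by the Hodge index theorem they are confined to a bounded region of $\NS(S)\otimes\R$, and discreteness of $\NS(S)$ then forces this orbit to be finite, so some power of $f^*$ fixes $H$, and the stabilizer of an ample class in $\OG(\NS(S),\Z)$ is finite as a discrete subgroup of the compact orthogonal group of the negative-definite lattice $H^\perp$. Second, the resulting torsion subgroup $\rho(G)\subset\GL(\NS(S),\Z)$ is finite: by Minkowski's lemma the principal congruence subgroup $\Gamma(3)$ is torsion-free, so $\rho(G)$ injects into the finite quotient $\GL(\NS(S),\Z/3\Z)$. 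Combining these two claims, $\rho(G)$ is finite, so $\deg_H$ takes only finitely many values on $G$, and $G$ is bounded.
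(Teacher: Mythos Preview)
Your proof is correct and follows essentially the same approach as the paper: split on Kodaira dimension, use the ruling over a higher-genus curve when $\kappa(S)=-\infty$, and use the action on $\NS(S_{\min})$ together with a Minkowski-type finiteness result when $\kappa(S)\geq 0$. You supply more detail than the paper at two points---the uniqueness of the rational fibration in the ruled case, and the Hodge-index argument that elliptic elements act with finite order on $\NS(S)$---and your observation that $\deg_H(f)$ depends only on $\rho(f)$ lets you bypass the paper's appeal to the kernel of $\rho$ being an algebraic (hence bounded) subgroup.
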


\subsection{Torsion subgroups}
Example \ref{wright} shows that certain torsion groups can be embedded into $\Cr_2(\C)$ in such a way that they are neither bounded nor preserve any fibration. However, the group-structure of torsion subgroups is not more complicated than the group structure of bounded groups: 

\begin{theorem}\label{torsionthm}
	Let $G\subset\Cr_2(\C)$ be a torsion subgroup. Then $G$ is isomorphic to a bounded subgroup of $\Cr_2(\C)$. Moreover, $G$ is isomorphic to a subgroup of $\GL_{48}(\C)$.
\end{theorem}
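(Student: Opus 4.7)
\emph{Proof plan.}
The plan is to combine Theorem~\ref{main} with an approximation of $G$ by its finitely generated subgroups and a model-theoretic compactness step.

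First observe that every torsion element is elliptic, because $\{\deg(g^n)\}$ is periodic; hence $G$ and each of its subgroups are groups of elliptic elements, and Theorem~\ref{main} applies. The first key step is to prove that every finitely generated subgroup $H\subset G$ is finite. Applying Theorem~\ref{main} to $H$: in case~(1), $H$ sits inside a maximal bounded (hence algebraic) subgroup of $\Cr_2(\C)$ by Blanc~\cite{MR2504924}, and a finitely generated torsion subgroup of a complex linear algebraic group is finite by Schur's theorem. In case~(2), $H$ lies up to conjugation in the de~Jonqui\`eres group $\mathcal{J}$, an extension whose kernel $\PGL_2(\C(x))$ and quotient $\PGL_2(\C)$ are linear, so Schur applied twice gives $H$ finite. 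Case~(3) is the genuinely delicate one and is where I expect the main obstacle to lie: a priori, for Wright-type subgroups one only learns that the group is torsion. The idea is that finite generation combined with the degree filtration forces $H$ to actually fall into case~(1) or~(2), since a finitely generated torsion group produced by iterated composition of finitely many generators cannot realise the ``diverging orders vs.\ growing degrees'' pattern of the Wright examples.

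The second step is a uniform Jordan-type linear embedding: every finite subgroup of $\Cr_2(\C)$ embeds into $\GL_{48}(\C)$. I would obtain this by going through Blanc's classification of maximal algebraic subgroups of $\Cr_2(\C)$ — automorphism groups of $\p^2$, of Hirzebruch surfaces and of del Pezzo surfaces, together with the finite normalizers lurking in $\mathcal{J}$ — and checking that each carries a faithful linear representation of dimension at most $48$, the extremum being attained on one distinguished entry of the list.

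Finally I would assemble an embedding of $G$ itself. Write $G$ as the directed union of its finitely generated, hence finite, subgroups $H_\alpha$, each of which embeds into $\GL_{48}(\C)$ by Step~2. The statement ``there exists an embedding of $G$ into $\GL_{48}(K)$ for some algebraically closed field $K$ of characteristic zero'' is a first-order schema in the language of groups enriched with a constant $c_g$ for each $g\in G$; by Step~2 it is finitely satisfiable, so by the compactness theorem it is satisfiable, and by L\"owenheim--Skolem we may arrange $|K|\leq|\C|$, after which $K$ embeds into $\C$. This yields $G\hookrightarrow\GL_{48}(\C)$. The Zariski-closure of the image is a linear algebraic group, and a locally finite subgroup of it is conjugate (via~\cite{MR2504924}) into a bounded subgroup of $\Cr_2(\C)$, so $G$ is abstractly isomorphic to such a bounded subgroup. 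The hardest part will be Step~1, the Burnside-type finiteness of finitely generated torsion subgroups of $\Cr_2(\C)$, as it is the one place where elementary model theory cannot do the work for us.
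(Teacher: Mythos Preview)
Your overall architecture (Burnside property for finitely generated subgroups, uniform linear embedding of the pieces, compactness to assemble) is the same as the paper's. Two points deserve correction, one minor and one substantive.

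\medskip

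\textbf{Step 1 is easier than you think.} You apply Theorem~\ref{main} to a finitely generated torsion $H$ and then worry about case~(3). But for finitely generated groups of elliptic elements the paper already has Theorem~\ref{cantat} (Cantat): such a group is bounded or preserves a rational fibration. So case~(3) never occurs nontrivially for a finitely generated $H$, and your Schur argument in cases~(1) and~(2) finishes immediately. In fact the paper does not argue at all here; it simply quotes the Burnside property as Theorem~\ref{burnside} from Cantat. You should not route Step~1 through Theorem~\ref{main}.

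\medskip

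\textbf{Step 3 has a genuine gap.} Compactness does give you an abstract embedding $G\hookrightarrow \GL_{48}(\C)$, which suffices for the ``moreover'' clause. But your last sentence, deducing that $G$ is isomorphic to a \emph{bounded subgroup of $\Cr_2(\C)$} by taking the Zariski closure and invoking Blanc, does not work: that closure lives in $\GL_{48}(\C)$, and Blanc's classification says nothing about algebraic subgroups of $\GL_{48}$. (Concretely, $(\Z/2\Z)^{10}\subset\GL_{48}(\C)$ is not isomorphic to any subgroup of $\Cr_2(\C)$ at all, so there is no general passage of this kind.) What you need is that the compactness step lands $G$ inside one \emph{fixed} maximal algebraic subgroup of $\Cr_2(\C)$, not merely inside $\GL_{48}$.

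The paper secures this by a pigeonhole argument you are missing. Either every finitely generated $\Gamma\subset G$ embeds into $\aut(\p^2)$ (respectively $\aut(\p^1\times\p^1)$, $\aut(S_6)$, $\aut(\F_{2n})$), in which case compactness, run with that fixed algebraic target, puts $G$ itself there; or one can find a single finite $\Gamma\subset G$ that fails to embed in \emph{any} of these. Then every finitely generated subgroup of $G$ containing $\Gamma$ (and of order $>648$) must, by Theorem~\ref{maxsubgroups}, fall into the remaining cases (exceptional conic bundles, $(\Z/2\Z)^2$-conic bundles, odd Hirzebruch surfaces), and Lemmas~\ref{GLmodmu}, \ref{semidirectexcept}, \ref{z2conic} show these all sit in $\PGL_2(\C)\times\PGL_2(\C)$. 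Now compactness with target $\PGL_2(\C)\times\PGL_2(\C)\subset\aut(\p^1\times\p^1)$ finishes. This reduction to a single target from Blanc's list is the missing idea; once you have it, both conclusions of the theorem follow at once.
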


The main tool to prove Theorem \ref{torsionthm} will be the compactness theorem of Malcev from model theory (Section \ref{compactnesssection}). 

The following corollary is a direct consequence of Theorem \ref{torsionthm}. It can be seen as an analogue of the Theorem of Jordan and Schur:

\begin{corollary}\label{jordanschur}
	There exists a constant $K$ such that every torsion subgroup of $\Cr_2(\C)$ contains a commutative normal subgroup of index $\leq K$.
\end{corollary}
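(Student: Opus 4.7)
The plan is to reduce immediately to a classical Jordan-type statement inside a finite-dimensional linear group, using Theorem~\ref{torsionthm} as a black box. Given a torsion subgroup $G\subset\Cr_2(\C)$, Theorem~\ref{torsionthm} produces an injective homomorphism $\iota\colon G\hookrightarrow \GL_{48}(\C)$. Thus it suffices to find, uniformly in $n$, a constant $J=J(n)$ such that every torsion subgroup of $\GL_n(\C)$ admits a normal abelian subgroup of index at most $J$; then pulling back such a subgroup of $\iota(G)$ under $\iota$ yields the desired normal abelian subgroup of $G$ of index $\leq J(48)$, and taking $K \coloneqq J(48)$ finishes the proof.

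For the linear statement I would appeal to the Jordan--Schur theorem: Jordan's classical theorem gives, for each $n$, a constant $J(n)$ such that every \emph{finite} subgroup of $\GL_n(\C)$ contains a normal abelian subgroup of index $\leq J(n)$, and Schur extended this to arbitrary torsion subgroups of $\GL_n(\C)$ (using that a finitely generated torsion subgroup of $\GL_n(\C)$ is finite, by Schur's theorem on periodic linear groups). Applying this with $n=48$ gives the constant $K$.

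The first step is therefore just to invoke Theorem~\ref{torsionthm}; the second is to quote the Jordan--Schur theorem for $\GL_{48}(\C)$; the third is to transport the normal abelian subgroup back along $\iota$, observing that injectivity preserves both normality in $G$ and the index, as well as commutativity.

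I do not expect any genuine obstacle here: once Theorem~\ref{torsionthm} is granted, the corollary is a formal consequence. The only delicate point is ensuring one cites a version of Jordan's theorem valid for all torsion (not only finite) linear groups, which is precisely Schur's extension; this is what allows the conclusion to hold without the a priori assumption that $G$ be finite.
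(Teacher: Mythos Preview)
Your argument is correct and matches the paper's own justification: the corollary is stated as a direct consequence of Theorem~\ref{torsionthm}, with the subsequent remark pointing to Schur's theorem to make the Jordan bound explicit. There is nothing to add.
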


\begin{remark}
	The constant $K$ in Corollary \ref{jordanschur}  can be bounded explicitely,  for instance with the help of a theorem of Schur (\cite[p.258]{curtis1962representation}).
\end{remark}

\subsection{The Tits alternative}
In a next step we will illustrate how Theorem \ref{main} and Theorem \ref{torsionthm} can be used to prove structure theorems on general subgroups of $\Cr_2(\C)$. Recall the following definition:

\begin{definition}
	\item
	\begin{enumerate}
		\item A group $G$ satisfies the {\it Tits alternative} if every subgroup of $G$ is either virtually solvable or contains a non-abelian free subgroup.
		\item A group $G$ satisfies the {\it Tits alternative for finitely generated subgroups} if every finitely generated subgroup of $G$ either is virtually solvable or contains a non-abelian free subgroup.
	\end{enumerate}
\end{definition}

Tits showed that linear groups over fields of characteristic zero satisfy the Tits alternative and that linear groups over fields of positive characteristic satisfy the Tits alternative for finitely generated subgroups (\cite{MR0286898}). Other well-known examples of groups that satisfy the Tits alternative include mapping class groups of surfaces (\cite{MR745513}), $\out(\F_n)$, the outer automorphism group of the free group of finite rank $n$ (\cite{MR1765705}), or hyperbolic groups in the sense of Gromov (\cite{MR919829}). Lamy showed that the Tits alternative holds for $\aut(\A_{\C}^2)$ using its amalgamated product structure that is given by the Theorem of Jung and van der Kulk  and Bass-Serre theory  (\cite{MR1832900}). In \cite{MR2811600}, Cantat established the Tits alternative for finitely generated subgroups of $\Cr_2(\C)$ as part of a series of profound results about the group structure of the plane Cremona group, which he deduces from the action of $\Cr_2(\C)$ by isometries on $\h(\p^2)$. The main obstacle to generalize the theorem of Cantat to arbitrary subgroups was caused by unbounded groups of elliptic elements that do not preserve any fibration. At this point, Theorem \ref{main} steps in. It turns out that it yields the results needed to generalize the result of Cantat:

\begin{theorem}\label{titscremona}
	Let $S$ be a complex compact K\"ahler surface. Then $\Bir(S)$ satisfies the Tits alternative.
\end{theorem}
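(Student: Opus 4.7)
The plan is to extend Cantat's proof of the Tits alternative for finitely generated subgroups of $\Cr_2(\C)$ \cite{MR2811600} to arbitrary subgroups, with Theorem \ref{main} and Corollary \ref{jordanschur} supplying the key missing ingredient. First I would reduce to the rational case. If $S$ is non-rational, classical descriptions of $\Bir(S)$ (either virtually contained in $\aut(S)$ for non-ruled $S$, or fitting into an extension $1 \to \PGL_2(\C(B)) \to \Bir(S) \to \aut(B)$ for a non-rational ruled $S$ over a base curve $B$) together with Theorem \ref{nonrational} show that every subgroup of $\Bir(S)$ is linear or an extension of linear groups, and the Tits alternative follows from Tits' original theorem for linear groups in characteristic zero.

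Given $G \subset \Cr_2(\C)$, I would then trichotomize according to the isometries $G$ induces on $\h(\p^2)$:
(I) $G$ contains a loxodromic element $f$ with attracting/repelling pair $\{\alpha^+,\alpha^-\} \subset \partial\h(\p^2)$. Either $G$ virtually preserves this pair, in which case Cantat's analysis of normalizers of cyclic loxodromic subgroups in $\Cr_2(\C)$ shows that $G$ is virtually solvable; or some $g \in G$ moves $\alpha^+$ or $\alpha^-$, and the Ping-Pong Lemma applied to large powers of $f$ and $gfg^{-1}$ produces a non-abelian free subgroup.
(II) $G$ contains a parabolic element but no loxodromic one. By the results of Diller--Favre and Cantat, the common fixed point at infinity of the parabolics forces $G$ to preserve a rational or an elliptic fibration, placing $G$ inside an exact sequence $1 \to K \to G \to Q \to 1$ with $K$ linear (a subgroup of $\PGL_2(\C(B))$ or of an elliptic curve over $\C(B)$) and $Q$ linear (a subgroup of $\aut(B)$), so Tits applies.
(III) Every element of $G$ is elliptic. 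Here Theorem \ref{main} splits the analysis into three subcases: $G$ bounded, whence linear via Blanc's classification of maximal algebraic subgroups; $G$ preserving a rational fibration, handled exactly as in (II); or $G$ torsion, hence virtually abelian by Corollary \ref{jordanschur}.

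The ping-pong step in (I) and the fibration analysis in (II) are insensitive to whether $G$ is finitely generated (ping-pong uses only two elements, and the induced action on a fibration sees only the abstract extension structure), so they transfer essentially verbatim from Cantat's argument. The hard part, and the precise reason the theorem had remained restricted to the finitely generated setting, is case (III): without Theorem \ref{main} there was no way to control unbounded groups of elliptic elements that preserve no fibration, as illustrated by Wright's construction recalled in Example \ref{wright}. Theorem \ref{main} closes exactly this gap, reducing case (III) to the already-linear or virtually abelian alternatives and thus completing the generalization of Cantat's theorem to arbitrary subgroups of $\Bir(S)$.
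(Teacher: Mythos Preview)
Your overall architecture matches the paper's: reduce to $\Cr_2(\C)$, then trichotomize by isometry type, and use Theorem~\ref{main} (together with the torsion analysis) to close the elliptic case. The parabolic and elliptic cases are handled essentially as in the paper, and your use of Corollary~\ref{jordanschur} in place of the paper's appeal to Theorem~\ref{torsionthm} is harmless since the former is a consequence of the latter.

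There is, however, a genuine gap in your loxodromic case~(I). You write that if some $g\in G$ moves $\alpha^+$ or $\alpha^-$, then ping-pong on large powers of $f$ and $gfg^{-1}$ yields a free subgroup. But this fails precisely when $g$ moves \emph{one} endpoint and fixes the other: then $f$ and $gfg^{-1}$ are loxodromic with distinct axes sharing a common boundary fixed point, and the standard ping-pong neighbourhoods cannot be chosen disjoint. This is not a hypothetical worry; the paper devotes Lemma~\ref{nocommonfix} and Lemma~\ref{nocommonfixgroup} to ruling it out, via Shepherd-Barron's tightness criterion (Theorem~\ref{sheprel}) and the monomial analysis, and Remark~\ref{solvablegap} notes that exactly this oversight occurs in \cite{MR3499516}. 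Your sketch, as written, reproduces that oversight. You would either need to invoke Lemma~\ref{nocommonfixgroup} explicitly, or else spell out whichever characteristic-zero argument from \cite{MR2811600} you have in mind---but ``ping-pong on $f$ and $gfg^{-1}$'' alone is not it.

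A smaller point: in case~(II) for an elliptic fibration, the kernel $K$ is not literally ``an elliptic curve over $\C(B)$''; the paper instead passes to a Halphen surface and uses the structure result Theorem~\ref{halphenstructure} to conclude virtual abelianness directly. Your formulation can be made to work, but it needs more care than the exact-sequence-with-linear-pieces slogan suggests.
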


\subsection{Solvable subgroups}

In \cite{MR3499516}, D\'eserti gives a description of solvable subgroups of $\Cr_2(\C)$. Theorem \ref{main} and \ref{torsionthm}  refine her results. We will moreover complement them with the following observation:

\begin{theorem}\label{derivedlength}
	The derived length of a solvable subgroup of $\Cr_2(\C)$ is at most $ 8$.
\end{theorem}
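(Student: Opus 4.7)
The plan is to combine the classification of solvable subgroups of $\Cr_2(\C)$ arising from the isometric action on $\h(\p^2)$ with the main results of this paper, and then to estimate the derived length in each case. Since a solvable group contains no non-abelian free subgroup, a solvable $G\subset\Cr_2(\C)$ falls, by the standard trichotomy coming from Cantat's work, into one of three types according to the isometries of $\h(\p^2)$ it produces: (a) $G$ contains a loxodromic element; (b) $G$ contains a parabolic element but no loxodromic element; (c) every element of $G$ is elliptic.

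In case (a), the two boundary fixed points of any loxodromic in $G$ must be globally preserved (otherwise a ping-pong argument would produce a non-abelian free subgroup), so a subgroup of index at most $2$ fixes both; the resulting structure, combined with Cantat's analysis, forces $G$ to be virtually cyclic, giving a small derived-length bound. In case (b), the common boundary fixed point corresponds to an invariant pencil that is rational or elliptic (Diller--Favre / Gizatullin), so $G$ is contained in the subgroup of $\Cr_2(\C)$ preserving this fibration; the exact sequence $1\to K\to G\to H\to 1$, with $K$ acting on a generic fiber and $H$ on the base, together with the fact that $K$ and $H$ sit inside well-understood linear groups, yields a small bound. In case (c), I would apply Theorem \ref{main} and split again: if $G$ is bounded, $G$ sits inside one of Blanc's maximal algebraic subgroups of $\Cr_2(\C)$, each an algebraic group of small dimension whose solvable subgroups have explicit derived-length bounds coming from Lie--Kolchin applied to the connected part and from the classification of finite solvable subgroups of $\PGL_n(\C)$ for small $n$; if $G$ preserves a rational fibration, then up to conjugation $G\subset\J=\PGL_2(\C(x))\rtimes\PGL_2(\C)$, and the derived length is controlled using the bound $3$ on the derived length of solvable subgroups of $\PGL_2$ over any field applied to both factors; if $G$ is torsion, Theorem \ref{torsionthm} embeds $G$ in $\GL_{48}(\C)$, and Zassenhaus' bound on the derived length of solvable linear groups (combined with Jordan's theorem) yields an explicit constant.

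The main obstacle is pinning down the sharp value $8$. This requires a careful computation in each case and identification of the extremal configuration, which will likely occur in the bounded subcase of (c) realized inside $\PGL_3(\C)$ or in the Jonquières case with maximal solvable subgroups (such as copies of $S_4$) in both $\PGL_2$ components. After verifying that every case yields a bound of at most $8$, the theorem follows by taking the maximum over the finite list of cases produced by the classification.
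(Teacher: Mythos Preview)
Your trichotomy (loxodromic / parabolic / elliptic) and the subsequent split of the elliptic case via Theorem~\ref{main} is exactly the paper's strategy, and your treatment of the bounded subcase through Blanc's list is what the paper does in Lemma~\ref{boundedsolv}. But two of your branches do not close.

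\textbf{Loxodromic case.} The conclusion ``$G$ is virtually cyclic'' is incomplete. When the axis is globally preserved, the kernel of the translation-length homomorphism is a bounded normal subgroup; if it is infinite, Theorem~\ref{cantatappendix} forces $G$ into $\GL_2(\Z)\ltimes D_2$ rather than into a virtually cyclic group. This monomial subcase has to be listed and bounded separately (the paper gets $\leq 5$ there).

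\textbf{Torsion subcase.} This is the real gap. You invoke only the second conclusion of Theorem~\ref{torsionthm}, the embedding $G\hookrightarrow\GL_{48}(\C)$, and then appeal to the Zassenhaus/Newman bound. That bound for $\GL_{48}(\C)$ is well above $8$, so this route cannot establish the theorem. The paper instead uses the \emph{first} conclusion of Theorem~\ref{torsionthm}: a torsion subgroup is isomorphic to a \emph{bounded} subgroup of $\Cr_2(\C)$. This folds the torsion subcase back into the bounded subcase you already handled via Blanc's classification, yielding derived length $\leq 5$ (Lemma~\ref{boundedsolv}). Without this reduction your maximum over cases exceeds $8$.

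Two smaller points. The bound for solvable subgroups of $\PGL_2(k)$ that Newman's paper gives is $4$, not $3$; since $\C(t)$ is not algebraically closed, the sharper bound $3$ is not available off the shelf for $\PGL_2(\C(t))$. With $4$ on each factor one gets $4+4=8$ in the de Jonqui\`eres case, and this is precisely where the paper's bound of $8$ comes from. Finally, the theorem only asserts $\leq 8$; the paper explicitly remarks that sharpness is unknown, so there is nothing to ``pin down'' beyond checking that every case stays at or below $8$.
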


For each $n\in\Z_+$ there exists an $N\in\Z_+$ such that every solvable subgroup of $\GL_n(\C)$ has derived length $\leq N$. This result seems to go back to Zassenhaus. Theorem \ref{derivedlength} can thus be seen as an analogue to this result.

\begin{remark}
	In \cite{MR0302781} Newman shows that every solvable subgroup of $\GL_2(\C)$ and hence of $\PGL_2(\C)$ is of derived length $\leq 4$ and that this bound is sharp. It follows that every solvable subgroup of the {\it de Jonqui\`eres subgroup} (see Section \ref{dejonq}), which is isomorphic to $\PGL_2(\C)\ltimes\PGL_2(\C(t))$,  is of derived length $\leq 8$. However, it is not clear whether this bound is sharp. We will see in the proof of Theorem \ref{derivedlength} that solvable subgroups of $\Cr_2(\C)$ that are not subgroups of $\PGL_2(\C)\ltimes\PGL_2(\C(t))$  are of derived length at most $6$.
\end{remark}

In \cite{Furter:2016kl}, Furter and Poloni show that the maximal derived length of a solvable subgroup of $\aut(\A^2_{\C})$ is $5$ (and that this bound is optimal).

\subsection{Acknowledgements}
I express my warmest thanks to my PhD-advisors J\'er\'emy Blanc and Serge Cantat for sharing their beautiful view on the Cremona group with me and for their constant support and helpful guidance. I also thank Michel Brion and Ivan Cheltsov for many helpful comments.

\section{Preliminaries}

\subsection{The plane Cremona group}
 The aim of this chapter is to gather some results we need for our purposes. Most of the times we will refer to other sources for the proofs.

\subsection{The bubble space}\label{bubblesection}
Let $S$ be a smooth projective surface. The {\it bubble space} $\B(S)$ is, roughly speaking, the set of all points that belong to $S$ or are infinitely near to $S$. More precisely, $\B(S)$ can be defined as the set of all triples $(y,S',\pi)$, where $S'$ is a smooth projective surface, $y\in S'$ and $\pi\colon S'\to S$ a birational morphism, modulo the following equivalence relation: A triple $(y, S', \pi)$ is equivalent to $(y', S'', \pi')$ if there exists a birational transformation $\varphi\colon S''\dashrightarrow S'$ that restricts to an isomorphism in a neighborhood of $y$, maps $y$ to $y'$ and satifies $\pi'\circ\varphi=\pi$. A point $p\in\B(S)$ that is equivalent to $(x,S,\id)$ is called a {\it proper point} of $S$. All points in $\B(S)$ that are not proper are called {\it infinitely near}. If there is no ambiguity, we will denote a point in the bubble space by $y$ instead of $(y, S', \pi)$.

Let $f\colon S_1\dashrightarrow S_2$ be a birational transformation. By Zariski's theorem (see \cite[Corollary II.12]{MR1406314}) we can write $f=\pi_2\circ \pi_1^{-1}$, where $\pi_1\colon S_3\to S_1, \pi_2\colon S_3\to S_1$ are finite sequences of blow ups. We may assume that there is no $(-1)$-curve in $S_3$ that is contracted by both, $\pi_1$ and $\pi_2$. The {\it base-points} $\B(f)$ of $f$ are the points in $\B(S)$ that are blown up by $\pi_1$. The proper base-points are sometimes called {\it indeterminacy points} of $f$.

A birational morphism $\pi\colon S\to S'$ induces a bijection $(\pi_1)_\bullet\colon\B(S)\to\B(S')\setminus\B(\pi^{-1})$ by sending a point represented by $(x,S,\varphi)$ to the point represented by $(x,X, \pi\circ\varphi)$. A birational transformation of smooth projective surfaces $f\colon S\dashrightarrow S'$ induces a bijection  $f_\bullet\colon\B(S)\setminus\B(f)\to\B(S')\setminus \B(f^{-1})$ by  $f_\bullet\coloneqq (\pi_2)_\bullet\circ(\pi_1)_\bullet^{-1}$, where $\pi_1\colon S''\to S$, $\pi_2\colon S''\to S'$ is a minimal resolution of $f$. 

\subsection{An infinite dimensional hyperbolic space}
Let $S$ be a smooth complex projective surface and denote by $\NS(S)$ its  N\'eron-Severi group, which is a finitely generated free abelian group. Its rank $\rho(X)$ is called the {\it Picard number}. By the Hodge index theorem, the signature of the intersection form on  $\NS(S)$ has signature $(1,\rho(X)-1)$. 

The pull-back of a birational morphism $\pi\colon S'\to S$ yields an injection of  $\NS(X)$ into $\NS(Y)$. The morphism $\pi\colon Y\to X$ can be written as a finite sequence of blow-ups. Let $e_1,\dots, e_k\subset Y$ be the classes of the irreducible components of the exceptional divisor of $\pi$, i.e.\,the classes contracted by $\pi$. Then we have a decomposition 
\begin{equation}\label{decomp}
\NS(Y)=\NS(X)\oplus\Z e_1\oplus\cdots\oplus\Z e_k,
\end{equation}
which is orthogonal with respect to the intersection form. 

Let $\pi_1\colon S_1\to 1$ and $\pi_2\colon S_2\to S$ be birational morphisms of smooth projective surfaces. We say that $\pi_1$ is {\it above} $\pi_2$ if $\pi_2^{-1}\circ\pi_1$ is a morphism. In other words, $\pi_1$ lies above $\pi_2$ if all the points that are blown up by $\pi_2$ are also blown up by $\pi_1$. For any two birational morphisms $\pi_1\colon S_1\to X$ and $\pi_2\colon S_2\to X$ there exists a birational morphism $\pi_3\colon S_3\to X$ that lies above $\pi_1$ and $\pi_2$. 
Consider the set of all birational morphisms of smooth projective surfaces $\pi\colon Y\to X$. Our remark  shows that the corresponding embeddings of the N\'eron-Severi groups $\pi\colon\NS(S')\to\NS(S)$ form a directed family, so the direct limit
\[
\ZZ(S)\coloneqq\lim_{\pi\colon S'\to S}\NS(S')
\] 
exists. It is called the {\it Picard-Manin space} of $S$. The intersection forms on the groups $\NS(S')$ induce a quadratic form on $\ZZ(X)$ of signature $(1,\infty)$. 

Let $p\in\B(S)$ be a point in the bubble space of $S$ and denote by $e_p$ the divisor class of the exceptional divisor of the blow-up of $p$ in the corresponding N\'eron-Severi group, i.e.\,$e_p$ can be seen as a point in $\ZZ(X)$. From Equation \ref{decomp} one deduces that the Picard-Manin space has the following decomposition
\[
\ZZ(S)=\NS(S)\oplus\bigoplus_{p\in\B(X)}\Z e_p.
\]
Moreover, $e_p\cdot e_p=-1$ and $e_p\cdot e_q=0$ for all $p\neq q$, as well as $e_p\cdot l=0$ for all $l\in\NS(S)$. 

 Let $\ZZZ(S)$ be the completion of the real vector space $\ZZ(S)\otimes\R$ obtained in the following way:
\[
\ZZZ(S)\coloneqq\{v+\sum_{p\in\B(S)}a_pe_p\mid v\in\NS(X)\otimes\R, a_p\in\R, \sum_{p\in\B(S)}a_p^2<\infty\}.
\] 
The intersection form extends continuously to a quadratic form on $\ZZZ(S)$ with signature $(1,\infty)$. Let $e_0\in\ZZZ(S)$ be a vector that corresponds to an ample class. We define $\h(S)$ to be the set of all vectors $v$ in $\ZZZ(S)$ such that $v\cdot v=1$ and $e_0\cdot v>0$. This yields a distance $d$ on $\h(S)$ by 
\[
d(u,v)\coloneqq \arccosh(u\cdot v).
\]
It turns out that $\h(S)$ with the metric $d$ is a complete metric space that is hyperbolic. The {\it boundary} $\partial\h(S)$ of $\h(S)$ consists of the one-dimensional vector subspaces in the {\it light cone}, i.e.\,the cone of isotropic vectors with respect to the intersection form. 

To an isometry $h$ of $\h(S)$ we associate $L(h)\coloneqq\inf \{d(h(p),p)\mid p\in\h(S)\}$. If $L(h)=0$ and the infimum is attained, i.e.\,$h$ has a fixed point in $\h(S)$, then $h$ is called {\it elliptic}. If $L(h)=0$ but the infimum is not attained, we call $h$ {\it parabolic}. It can be shown that a parabolic isometry fixes exactly one point $p$ on the border $\partial \h(S)$. If $L(h)>0$ we call $h$ {\it loxodromic}. In this case the set
\[
\{p\in\h(S)\mid d(h(p),p)=L(h)\}
\]
is a geodesic line in $\h(S)$. It is called the {\it axis} $\Ax(h)$ of $h$ and $L(h)$ is called the {\it translation length}. A loxodromic isometry has exactly two fixed points in $\partial\h$, one of them attractive and the other repulsive (see \cite{MR2811600}).

 Often, we will just write $\h$ and $\partial\h$ instead of $\h(S)$ and $\partial\h(S)$ if it is clear from the context, over which surface we are working.

A birational morphism $f\colon S'\to S$ of smooth projective surfaces induces an isomorphism $f_*\colon \ZZ(S')\to\ZZ(S)$. Let $\NS(S')=\NS(S)\oplus\Z e_{p_1}\oplus\dots\oplus \Z e_{p_n}$, where $p_1,\dots, p_n\in \B(X)$ are the points blown up by $f$ and $e_{p_i}$ is the irreducible component in the exceptional divisor that is contracted to $p_i$. The map $f_*$ is then defined by $f_*(e_p)=e_{f_\bullet(p)}$ for all $p\in\B(S')$, $f_*(e_{p_i})=e_{p_i}$ and $f_*(D)=D$ for all $D\in \NS(S)\subset \NS(S')$ (where the inclusion is given by the pull back of $f$). 
A birational map $f\colon S'\dashrightarrow S$ induces an isomorphism $f_*\colon \ZZ(S')\to\ZZ(S)$, which is defined by $f_*=(\pi_2)_*\circ (\pi_1)_*^{-1}$, where $\pi_1\colon S''\to S', \pi_2\colon S''\to S'$ are birational morphisms such that $f=\pi_2\circ\pi_1^{-1}$.  

Now assume that $f\in\Bir(S)$. Then $f_*$ yields an automorphism of $\ZZ(S)\otimes\R$, which extends to an automorphism of the completion $\ZZZ(S)$ and preserves the intersection form. This automorphism thus preserves the hyperboloid $\h(S)$ and it induces an isometry on~$\h(S)$. This gives an action by isometries of $\Bir(S)$ on~$\h(S)$. 

We refer to \cite{MR833513}, where this construction was developed for the first time and \cite{MR2811600} for details and proofs.

An element $f\in\Bir(S)$ is called  {\it elliptic}, if the corresponding isometry on $\h(S)$ is elliptic, {\it parabolic} if the corresponding isometry is parabolic and {\it loxodromic} if the corresponding isometry is loxodromic. The {\it axis} $\Ax(f)$ of a loxodromic element $f\in\Bir(S)$ is the axis in $\h(S)$ of the  isometry of $\h(S)$ corresponding to~$f$.

\subsection{Degrees and types}\label{degreesandtypes}
The importance of the action of $\Bir(S)$ by isometries on $\h(S)$ is a result of the following  correspondence between the dynamical behavior of a birational transformation $f$ of $S$, in particular its degree, and the type of the induced isometry on $\h(S)$:

\begin{theorem}[Gizatullin; Cantat; Diller and Favre]\label{dim2}
	Let $S$ be a complex smooth projective surface with a fixed polarization $H$ and $f\in\Bir(X)$. Then one of the following is true:
	\begin{enumerate}
		\item $f$ is elliptic, the sequence $\{\deg_H(f^n)\}$ is bounded  and there exists a $k\in\Z_+$ and a birational map $\varphi\colon S\dashrightarrow S'$ to a smooth projective surface $S'$ such that $\varphi f^k\varphi^{-1}$ is contained in $\aut^0(S')$, the neutral component of the automorphism group $\aut(S')$.
		\item[(2a)] $f$ is parabolic and $\deg_H(f^n)\sim cn$ for some positive constant $c$ and $f$ preserves a rational fibration, i.e.\,there exists a smooth projective surface $S'$, a birational map $\varphi\colon S\dashrightarrow S'$, a curve $B$ and a fibration $\pi\colon S'\to B$, such that a general fiber of $\pi$ is rational and such that $\varphi f\varphi^{-1}$ permutes the fibers of $\pi$.
		\item[(2b)] $f$ is parabolic and $\deg_H(f^n)\sim cn^2$ for some positive constant $c$ and $f$ preserves a fibration of genus 1 curves, i.e.\,there exists a smooth projective surface $S'$, a birational map $\varphi\colon S\dashrightarrow S'$, a curve $B$ and a fibration $\pi\colon S'\to B$, such that $\varphi f\varphi^{-1}$ permutes the fibers of $\pi$ and such that $\pi$ is an elliptic fibration.
		\item[(3)] $f$ is loxodromic and $\deg_H(f^n)=c \lambda(f)^n+O(1)$ for some positive constant $c$, where $\lambda(f)$ is the {\it dynamical degree} of $f$. In this case, $f$ does not preserve any fibration.
	\end{enumerate}
\end{theorem}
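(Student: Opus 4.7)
The strategy is to translate everything into the isometric action on $\h(S)$ and proceed by case distinction on the type of the induced isometry $f_*$. Let $e_0\in\h(S)$ be the unit vector associated with $H$. From the definitions, $\deg_H(f^n)\cdot H^2=f_*^n(e_0)\cdot e_0$, and since $d(u,v)=\arccosh(u\cdot v)$, the growth rate of $\{\deg_H(f^n)\}$ is governed by the hyperbolic displacement $d(f_*^n e_0,e_0)$. This reduces the whole theorem to orbit-growth facts for isometries of $\h(S)$, together with extra surface-geometric information about invariant isotropic classes on the boundary.

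For the elliptic case, a fixed point $q\in\h(S)$ yields $d(f_*^n e_0,e_0)\leq 2\,d(e_0,q)$, so the degrees are uniformly bounded. The regularization statement follows from Gizatullin's theorem: any birational self-map with bounded degree sequence is, after suitable birational conjugation, an automorphism of some smooth projective surface $S'$, and replacing $f$ by a finite iterate lands it in the identity component $\aut^0(S')$. For the loxodromic case, the translation length satisfies $L(f)=\log\lambda(f)$, and projecting $e_0$ orthogonally onto $\Ax(f)$ gives $\cosh(d(f_*^n e_0,e_0))\sim c\,\lambda(f)^n$, which translates into $\deg_H(f^n)=c'\lambda(f)^n+O(1)$. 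If $f$ preserved a fibration, the class of a general fiber would be an $f_*$-invariant nonzero isotropic class, hence a fixed point of $f_*$ on $\partial\h(S)$; but the only such fixed points are the two endpoints of $\Ax(f)$, which are eigenvectors for $\lambda(f)^{\pm 1}$ and, since $\lambda(f)$ is a Salem or Pisot number $>1$ in general, not representable by an integral class in any $\NS(S')$, a contradiction.

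The parabolic case is the most delicate, and I expect it to be the main obstacle. Here $f_*$ fixes a unique class $[v]\in\partial\h(S)$ up to scalar, with $v\cdot v=0$. Following Diller-Favre, one shows that after a suitable birational conjugation, $v$ is represented by a nef isotropic class in $\NS(S')$ for some smooth model $S'$. Such a class defines, via $|nv|$ for $n\gg 0$, a fibration $\pi\colon S'\to B$ whose general fiber $F$ satisfies $F^2=0$ and, by adjunction combined with the Hodge index theorem, $F\cdot K_{S'}\in\{-2,0\}$; hence the fibration is either rational or elliptic. The two subtypes are then distinguished by the Jordan form of $f_*$ on a finite-dimensional invariant subspace containing $v$: a single Jordan block of size $3$ produces quadratic degree growth (elliptic fibration), while a block of size $2$ produces linear growth (rational fibration). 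The difficulty is precisely this last step, where extracting an honest fibration from a purely isometric fixed boundary class and matching the two arithmetic subtypes to the two geometric ones requires the careful Picard-Manin lattice analysis of Diller-Favre together with Cantat's refinement.
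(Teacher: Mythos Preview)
The paper does not give its own proof of this theorem: immediately after the statement it only records the attribution, pointing to Gizatullin for the parabolic classification of automorphisms and to Diller--Favre for the degree-growth trichotomy, with Cantat's survey as a further reference. So there is no in-paper argument to compare against; the result is used as a black box throughout.

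Your sketch is a faithful outline of how the cited references actually proceed, and the elliptic and parabolic paragraphs are accurate in spirit (the elliptic bounded-degree estimate is exactly the computation the paper later reproduces in Lemma~\ref{fixelliptic}; your identification of the parabolic case as the hard one, requiring the Diller--Favre analysis to realise the isotropic boundary class as an honest nef fibration class, is correct). One small imprecision worth tightening: in the loxodromic case, the cleanest contradiction is not integrality of the eigenvector but the eigenvalue itself. If $f$ preserved a fibration with fibre class $F$, then $f^*F=F$ exactly, so $F$ is a nonzero isotropic vector with eigenvalue $1$; but a loxodromic isometry acts on its two isotropic eigenlines by the scalars $\lambda(f)^{\pm 1}\neq 1$, so no nonzero isotropic vector is fixed. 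Your irrationality argument (that $\lambda(f)>1$ forces $\lambda(f)\notin\Q$, hence the eigenline contains no rational class) is also valid, but it is a detour: you only need that the fibre class is genuinely fixed, not merely that its line is.
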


A first main step towards Theorem \ref{dim2} has been done by Gizatullin in \cite{MR563788} (see as well \cite{MR3480704}), where he classified parabolic automorphisms of surfaces. In \cite{diller2001dynamics}, Diller and Favre proved the main result about the possible degree growths (see also \cite{MR2648673}).

Let us also recall the following result, which we will use later:

\begin{theorem}[{\cite[Theorem 6.6]{MR2811600}}]\label{fixnotlox}
	Let $G\subset\Cr_2(\C)$ be a subgroup. If $G$ does not contain any loxodromic element, then $G$ fixes a point in $\h\cup\partial \h$. 
\end{theorem}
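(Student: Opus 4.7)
The plan is to combine the trichotomy for isometries of a hyperbolic space with a dichotomy on orbit boundedness. By hypothesis every $g\in G$ acts on $\h$ as an elliptic or parabolic isometry, and I split the argument according to whether some orbit of $G$ on $\h$ is bounded.

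Suppose first that there exists $x\in\h$ with $Gx$ bounded. Since $\h$ is a complete $\CAT(-1)$ space, the bounded $G$-invariant subset $Gx$ admits a unique circumcenter $c\in\h$. By uniqueness, $c$ is preserved by every isometry of $\h$ that preserves $Gx$; in particular $G$ fixes $c$, producing the desired fixed point in $\h$. Note that if one orbit is bounded then every orbit is, since for any $y\in\h$ one has $d(gy,y)\leq 2d(x,y)+d(gx,x)$, so this case really is the case of a global bounded-orbit action.

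Suppose next that no orbit of $G$ on $\h$ is bounded. I show that $G$ fixes a unique point on $\partial\h$. Fix $x_0\in\h$; since $Gx_0$ is unbounded, its accumulation set in $\partial\h$ is a nonempty, $G$-invariant limit set $\Lambda(G)\subset\partial\h$. I claim that $\Lambda(G)$ consists of a single point. Suppose toward a contradiction that $\xi_1,\xi_2\in\Lambda(G)$ are distinct, and choose sequences $(g_n),(h_n)$ in $G$ with $g_nx_0\to\xi_1$ and $h_nx_0\to\xi_2$. Passing to subsequences so that the inverses also have well-defined limit behavior on $\partial\h$, one applies the ping-pong lemma on small disjoint horoball-type neighborhoods of $\xi_1$ and $\xi_2$ in $\h\cup\partial\h$ to sufficiently high iterates of a suitable pair $g_n^N, h_n^N$: this produces a loxodromic element of $G$, contradicting the hypothesis. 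Hence $\Lambda(G)=\{\xi\}$ is a single point, and $G$ fixes $\xi\in\partial\h$.

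The main technical obstacle is the ping-pong step in paragraph three. Making it rigorous in the infinite-dimensional space $\h(S)$ requires constructing explicit disjoint neighborhoods of $\xi_1$ and $\xi_2$ in $\h\cup\partial\h$ on which high iterates of suitable isometries act as strong contractions toward their respective attracting points, as well as verifying that the minimal displacement along a candidate axis is positive. Both ingredients rely on the precise $\CAT(-1)$ geometry of $\h(S)$ and on Busemann-function estimates at the boundary; this is the geometric heart of Cantat's proof of \cite[Theorem 6.6]{MR2811600}, and I would simply follow that route.
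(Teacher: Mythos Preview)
The paper does not prove this statement; it is quoted from \cite[Theorem~6.6]{MR2811600} and used as a black box, so there is no proof in the present paper to compare against.

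Your bounded-orbit case is correct and standard: the circumcenter of a bounded orbit exists and is unique in any complete $\CAT(0)$ space, hence is $G$-fixed.

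Your unbounded-orbit case, however, has a genuine gap. You propose to apply ping-pong to powers $g_n^N,h_n^N$ of elements satisfying $g_nx_0\to\xi_1$ and $h_nx_0\to\xi_2$. But the convergence $g_nx_0\to\xi_1$ gives no information about the boundary dynamics of the individual isometry $g_n$ or of its powers. For a concrete obstruction: if each $g_n$ is the order-two elliptic rotation about a point $p_n$ with $p_n\to\xi_1$ along a geodesic ray from $x_0$, then indeed $g_nx_0\to\xi_1$, yet $g_n^2=\id$ and $g_n$ acts on $\partial\h$ as an involution with no contraction whatsoever. More generally, an elliptic isometry has no attracting dynamics on $\partial\h$, and a parabolic has only a single neutral fixed point there; neither supplies the north--south behaviour on which the ping-pong lemma relies when applied to a single element and its iterates. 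The loxodromic element one needs is produced by a different mechanism: if $G$ contains a parabolic $p$ fixing $\xi$ and some $g\in G$ does not fix $\xi$, then a suitable word in the two parabolics $p$ and $gpg^{-1}$ (which have distinct fixed points) is loxodromic, as one checks by a direct displacement computation in the totally geodesic hyperbolic plane they determine; if every element of $G$ is elliptic, one instead uses the inequality bounding the translation length of a product $gh$ from below in terms of the distance between the fixed-point sets of $g$ and $h$, which forces those fixed-point sets either to remain bounded (contradicting the unbounded orbit) or to accumulate on a single boundary point. Your closing paragraph defers to Cantat for the details, but the specific route you sketch beforehand --- ping-pong on iterates $g_n^N,h_n^N$ --- is not the right mechanism and cannot be made to work as stated.
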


\subsection{The de Jonqui\`eres subgroup}\label{dejonq}
A fibration of a surface $S$ is a rational map $\pi\colon S\dashrightarrow C$, where $C$ is a curve such that the general fibers are one-dimensional. We will identify two fibrations $\pi_1\colon S\dashrightarrow C$ and $\pi_2\colon S\dashrightarrow C'$  with each other if there exists an open dense subset $U\subset S$ that is contained in the domain of $\pi_1$ and $\pi_2$ such that the restrictions of $\pi_1$ and $\pi_2$ to $U$ define the same set of fibers. We say that a group $G\subset\Bir(S)$ {\it preserves} a fibration $\pi$ if $G$ permutes the fibers, i.e.\,there exists a rational $G$-action on $C$ such that $\pi$ is a $G$-equivariant map. 
A {\it rational fibration} of a rational surface $S$ is a rational map $\pi\colon S\dashrightarrow \p^1$ such that the general fiber is rational. Recall that, by a Theorem of Noether and Enriques, there exists a birational map $\varphi\colon C\times\p^1\dashrightarrow X$ such that $\pi\circ\varphi$ is the projection onto the first factor. In other words, up to birational transformations there exists just one rational fibration of $\p^2$. 

\begin{definition}
	The {\it de Jonqui\`eres subgroup} $\J$ of $\Cr_2(\C)$ is the subgroup of elements that preserve the pencil of lines through the point $[0:0:1]\in\p^2$. 
\end{definition}

With respect to affine coordinates $[x:y:1]$ an element of $\J$ is of the form
\[
(x,y)\dashmapsto\left(\frac{ax+b}{cx+d},\frac{\alpha(x)y+\beta(x)}{\gamma(x)y+\delta(x)}\right),
\]
where $\left( \begin{array}{cc}
a & b \\ 
c & d
\end{array} \right) \in\PGL_2(\C)$ and $\left( \begin{array}{cc}
\alpha(x) & \beta(x) \\ 
\gamma(x) & \delta(x)
\end{array} \right) \in\PGL_2(\C(x))$.
This induces an isomorphism
\[
\J\simeq \PGL2(\C)\ltimes\PGL_2(\C(x)).
\]
Every subgroup of $\Cr_2(\C)$ that preserves a rational fibration is conjugate to a subgroup of $\J$.

\subsection{Halphen surfaces}
Consider two smooth cubic curves $C$ and $D$ in $\p^2$. Then $C$ and $D$ intersect in $9$ points $p_1,\dots, p_9$ and there is a pencil of cubic curves passing through these 9 points. By blowing up $p_1,\dots, p_9$, we obtain a rational surface $X$ with a fibration $\pi\colon X\to\p^1$ whose fibers are genus 1 curves. More generally, we can consider a pencil of curves of degree $3m$ for any $m\in\Z_+$ and blow up its base-points to obtain a surface $X$. Such a pencil of genus 1 curves is called a {\it Halphen pencil} and the surface $X$ a {\it Halphen surface of index $m$}. A surface $X$ is Halphen if and only if the linear system $|-mK_X|$ is one-dimensional, has no fixed component and is base-point free.  Up to conjugacy by birational maps, every pencil of genus 1 curves of $\p^2$ is a Halphen pencil and Halphen surfaces are the only examples of rational elliptic surfaces. We refer to \cite{MR2904576} and \cite[Chapter 10]{MR1392959} for proofs and more details. A subgroup $G$ of $\Cr_2(\C)$ that preserves a pencil of genus 1 curves is therefore conjugate to a subgroup of the automorphism group of some Halphen surface by the following lemma:

\begin{lemma}\label{halphenaut}
	Let $X$ be a Halphen surface and $f\in\Bir(X)$ a birational transformation that preserves the Halphen pencil, then $f\in\aut(X)$.
\end{lemma}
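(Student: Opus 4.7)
The plan is to take a minimal resolution $\pi_1,\pi_2\colon Y\to X$ of $f$, so that $f=\pi_2\circ\pi_1^{-1}$ and no curve of $Y$ is contracted by both $\pi_1$ and $\pi_2$, and to argue that $\pi_1$ must already be an isomorphism; by symmetry (or by a dimension count of the Picard rank) the same holds for $\pi_2$, so $f$ will be an automorphism. Since $X$ is Halphen, the pencil coincides with the base-point free linear system $|-mK_X|$ and gives the elliptic fibration $\pi\colon X\to\p^1$. The hypothesis that $f$ preserves the pencil supplies a morphism $g\colon\p^1\to\p^1$ with $\pi\circ f=g\circ\pi$; pulled back to $Y$ this says $\pi\circ\pi_2=g\circ(\pi\circ\pi_1)$ as morphisms $Y\to\p^1$.

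Suppose for contradiction that $\pi_1$ is not an isomorphism. Then there is a $(-1)$-curve $E\subset Y$ contracted by $\pi_1$, and by minimality of the resolution $\pi_2$ does not contract $E$, so $C\coloneqq\pi_2(E)$ is an irreducible curve in $X$. The key observation is that $C$ is contained in a fiber of $\pi$: indeed $\pi\circ\pi_1$ kills $E$, hence so does $g\circ(\pi\circ\pi_1)=\pi\circ\pi_2$, forcing $\pi(C)$ to be a single point. As every fiber of $\pi$ is linearly equivalent to $-mK_X$, this gives $C\cdot(-mK_X)=0$, that is, $C\cdot K_X=0$.

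The rest is standard intersection theory on $Y$. Writing $K_Y=\pi_2^*K_X+E_2$ with $E_2$ the effective exceptional divisor of $\pi_2$ (with strictly positive coefficients on each irreducible component) and applying the projection formula,
\[
-1=E\cdot K_Y=(\pi_2)_*E\cdot K_X+E\cdot E_2=C\cdot K_X+E\cdot E_2=E\cdot E_2.
\]
However $E$ is not a component of $E_2$, because $\pi_2$ does not contract it, so $E\cdot E_2\geq 0$, a contradiction. Hence $\pi_1$ is an isomorphism, $f=\pi_2\circ\pi_1^{-1}$ is a birational morphism $X\to X$, and consequently an automorphism.

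The main conceptual step is translating the pencil-preservation hypothesis into the numerical identity $C\cdot K_X=0$ via the equivariance equation $\pi\circ\pi_2=g\circ(\pi\circ\pi_1)$; once this is established, the contradiction is essentially forced by the adjunction-type formula for $K_Y$ together with the fact that distinct irreducible curves on a smooth surface intersect non-negatively. I do not anticipate any serious obstacle beyond keeping the book-keeping of the two resolution morphisms straight.
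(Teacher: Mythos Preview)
Your proof is correct and follows essentially the same approach as the paper: take a minimal resolution, use that preservation of the pencil forces the relevant intersection with $K_X$ to vanish, and derive a contradiction from a $(-1)$-curve contracted by one morphism but not the other. The paper packages the key step as $\eta^*K_X=\pi^*K_X$ (whence $\sum E_i=\sum F_i$) rather than your $C\cdot K_X=0$, but the content and the final contradiction are the same.
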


\begin{proof}
	Since the Halphen pencil is defined by a multiple of $-K_X$, the class of the anticanonical divisor, every birational transformation of a Halphen surface that preserves the Halphen fibration, preserves $K_X$, the class of the canonical divisor. Assume that $f$ is not an automorphism and let $Z$ be a minimal resolution of indeterminacies of $f$ and $\pi, \eta\colon Z\to X$ such that $f=\pi\circ\eta^{-1}$. We have 
	\[
	\eta^*(K_X)+\sum E_i=K_Z=\pi^*(K_X)+\sum F_i,
	\] 
	where the $E_i$ and $F_i$ are the total pull-backs of the exceptional curves; in particular, $E_i^2=-1$, $F_i^2=-1$ and $E_iE_j=0$, $F_iF_j=0$ for $i\neq j$. Since $f$ preserves $K_X$, we have that $\eta^*(K_X)=\pi(K_X)$ and hence $\sum E_i=\sum F_i$. Note that $\sum E_i$ contains at least one $(-1)$-curve $E_k$. Hence $E_k\cdot (\sum E_i)=-1=E_k\cdot (\sum F_i)$. But this implies that $E_k$ is contained in the support of $\sum F_i$, which contradicts the minimality of the resolution.  
\end{proof}

The automorphism groups of Halphen surfaces are studied in \cite{MR563788} and in \cite{MR2904576}, see also \cite{MR3480704}. We need the following result, which can be found in \cite[Remark 2.11]{MR2904576}:

\begin{theorem}\label{halphenstructure}
	Let $X$ be a Halphen surface. Then there exists a homomorphism $\rho\colon \aut(X)\to\PGL_2(\C)$ with finite image such that $\ker(\rho)$ is an extension of an abelian group of rank $\leq 8$ by a cyclic group of order dividing $24$. 
\end{theorem}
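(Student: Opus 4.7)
The plan is to build $\rho$ from the invariance of the Halphen pencil under $\aut(X)$, bound the image via the finitely many distinguished points on the base, and describe the kernel using the structure of the generic elliptic fiber.

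First, since every automorphism $f \in \aut(X)$ preserves $K_X$, it also preserves the linear system $|-mK_X|$, which defines the Halphen pencil (by definition of a Halphen surface, this system is one-dimensional and base-point free). Hence $f$ permutes the fibers of $\pi : X \to \mathbb{P}^1$ and descends to an automorphism of $\mathbb{P}^1$, giving the homomorphism $\rho : \aut(X) \to \PGL_2(\mathbb{C})$.

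Second, to see that $\rho$ has finite image, I would exploit the fact that $X$ is a rational elliptic surface with $e(X)=12$, so $\pi$ has only finitely many singular fibers; the $m$ multiple fibers of the Halphen pencil (obtained from the base points blown up) form a second distinguished finite subset of $\mathbb{P}^1$. Elements of the image must permute each of these finite subsets and, in the non-isotrivial case, commute with the non-constant $j$-invariant map $j:\mathbb{P}^1 \to \mathbb{P}^1$, forcing the image into a finite subgroup of $\PGL_2(\mathbb{C})$. The few isotrivial configurations can be handled separately by observing that any translation one-parameter subgroup of $\PGL_2(\mathbb{C})$ would fail to permute the multiple-fiber locus.

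Third, I would analyze $\ker(\rho)$ by restricting each of its elements to the generic fiber $E_\eta$, which is an elliptic curve over $\mathbb{C}(\mathbb{P}^1)$. Such an automorphism lies in $\aut(E_\eta)$, which sits in the short exact sequence $1 \to E_\eta(\mathbb{C}(\mathbb{P}^1)) \to \aut(E_\eta) \to \mu_n \to 1$ with $n\in\{2,4,6\}$ depending on the $j$-invariant. Intersecting with $\ker(\rho)$ gives an abelian translation subgroup (which embeds into the Mordell-Weil group of the Jacobian fibration of $\pi$, of rank at most $\rho(X)-2=10-2=8$ by the Shioda-Tate formula) and a cyclic quotient inside $\mu_n$; combining the rotational factor with possible $2$-torsion translations and multi-fiber corrections yields the bound $24$ on the order of the cyclic part.

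The step I expect to be the main obstacle is the multi-fiber case $m\geq 2$, since then $X$ admits no section and the usual Mordell-Weil theorem does not apply directly. One has to descend to the associated Jacobian elliptic surface, identify the translation part of $\ker(\rho)$ inside its Mordell-Weil group, and verify that the rank bound survives this reduction. Pinning down the precise cyclic bound $24$ (rather than, say, $12$) is also delicate and requires careful bookkeeping of how automorphisms of individual fibers interact with $2$-torsion translations and the hyperelliptic involution.
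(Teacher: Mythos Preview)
The paper does not prove this theorem at all: it is quoted verbatim as a result from the literature, with the reference \cite[Remark~2.11]{MR2904576} (Cantat--Dolgachev) given in the sentence immediately preceding the statement. There is therefore no proof in the paper to compare your proposal against.

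That said, your outline follows the standard strategy one finds in the cited source and in Gizatullin's earlier work: build $\rho$ from the invariance of the anticanonical pencil, bound the image by its action on the finite set of singular fibres (and on the $j$-line in the non-isotrivial case), and analyse $\ker(\rho)$ via the generic fibre and the Mordell--Weil group, with the rank bound coming from Shioda--Tate and $\rho(X)=10$. Two small points deserve care. First, a Halphen surface of index $m\geq 2$ has a \emph{single} multiple fibre of multiplicity $m$, not ``$m$ multiple fibers''; one point on the base is not enough to force finiteness, so the argument really has to go through the honestly singular fibres (whose Euler characteristics sum to $12$) and, in the few extremal configurations with only two of them, through the $j$-map or a direct check. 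Second, your justification of the bound $24$ is hand-waving: the quotient of $\aut(E_\eta)$ by translations is cyclic of order dividing $6$, and the extra factor does not come from ``combining with $2$-torsion translations'' in any obvious way. In the reference the constant arises from a more careful bookkeeping of how $\ker(\rho)$ sits inside the group of fibrewise automorphisms when there is a multiple fibre; you should either reproduce that argument or, if you only need the qualitative statement (finite-by-abelian-by-finite) for the applications in the paper, say so explicitly.
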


We also recall the following result from \cite{MR2811600}:

\begin{lemma}\label{noloxfibration}
	Let $G\subset\Cr_2(\C)$ be a group that does not contain any loxodromic element but contains a parabolic element. Then $G$ is conjugate to a subgroup of the de Jonqui\`eres group $\J$ or to a subgroup of $\aut(Y)$, where $Y$ is a Halphen surface.
\end{lemma}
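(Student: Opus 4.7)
The strategy is to combine the fixed-point result of Theorem \ref{fixnotlox} with the dictionary between parabolic isometries and invariant fibrations from Theorem \ref{dim2}.

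First, since $G$ contains no loxodromic element, Theorem \ref{fixnotlox} produces a point $p\in\h\cup\partial\h$ that is fixed by every element of $G$. Let $f\in G$ be a parabolic element. If $p$ lay in $\h$, then $f$ would fix a point in $\h$ and therefore be elliptic, a contradiction. Hence $p\in\partial\h$. But a parabolic isometry fixes a unique point of $\partial\h$, namely its own parabolic fixed point, so $p$ must be that distinguished boundary point of $f$.

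Next I identify $p$ geometrically. By Theorem \ref{dim2}, the parabolic element $f$ preserves a fibration $\pi\colon S'\dashrightarrow B$ (rational in case (2a), elliptic in case (2b)) after conjugation by some birational map, and the isotropic ray fixed by $f$ on $\partial\h$ is the ray spanned by the class of a fiber of $\pi$ in the Picard--Manin space. Since every element of $G$ fixes this ray, every element of $G$ preserves the class of the fiber, hence permutes the fibers of $\pi$; in other words, after the same conjugation $G$ preserves the fibration $\pi$.

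It remains to translate the two cases into the statement. In case (2a) the general fiber is rational, so by the Theorem of Noether--Enriques the fibration is birationally the pencil of lines through a point of $\p^2$, and therefore $G$ is conjugate to a subgroup of the de Jonqui\`eres group $\J$ by the discussion in Section \ref{dejonq}. In case (2b) the fibration is elliptic, and after conjugation it is a Halphen pencil on a Halphen surface $Y$; Lemma \ref{halphenaut} then forces every element of $G$ that preserves this pencil to lie in $\aut(Y)$, so $G\subset\aut(Y)$.

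The only delicate point is the identification of the parabolic boundary fixed point with the class of the invariant fibration; everything else is a direct application of results already quoted. I would therefore expect the main obstacle to be phrasing this identification carefully, keeping track of the fact that the Picard--Manin space, and hence $\partial\h$, is independent of the chosen birational model, so that passing from $S$ to $S'$ via the conjugating birational map is harmless.
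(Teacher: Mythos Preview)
Your proposal is correct and follows essentially the same route as the paper's proof: apply Theorem~\ref{fixnotlox} to get a common fixed point, use the parabolic element $f$ to force this point onto $\partial\h$ and identify it with the isotropic ray of the fiber class coming from Theorem~\ref{dim2}, conclude that all of $G$ preserves the fibration, and then split into the rational (de Jonqui\`eres) and elliptic (Halphen, via Lemma~\ref{halphenaut}) cases. The ``delicate point'' you flag---matching the parabolic boundary fixed point with the fiber class---is exactly the step the paper spells out in slightly more detail.
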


\begin{proof}
	By Theorem \ref{fixnotlox}, $G$ fixes a point $q\in\h\cup\partial\h$. Let $f\in G$ be a parabolic element. By definition, $f$ has no fixed point in $\h$ and a unique fixed point $p\in\partial\h$. It follows that $p=q$. By Theorem \ref{dim2}, there exists a birational map $\varphi\colon \p^2\dashrightarrow Y$, a curve $C$ and a fibration $\pi\colon Y\to C$  such that $\varphi f\varphi^{-1}$ permutes the fibers of $\pi$. In particular, $\varphi f\varphi^{-1}$ preserves the divisor class of a fiber $F$ of $\pi$. Being the class of a fiber, $F$ has self-intersection $0$. The point $A\in\ZZZ(\p^2)$ corresponding to $F$ satisfies therefore $A\cdot A=0$ and we obtain that $p\in\partial\h$ corresponds to the line passing through the origin and $A$. It follows that every element in $G$ fixes $A$ and hence that every element in $G$ preserves the divisor class of $F$, i.e.\,every element in $\varphi G\varphi^{-1}$ permutes the fibers of the fibration $\pi\colon Y\to C$. If the fibration is rational, $G$ is conjugate to a subgroup of $\J$. If it is a fibration of genus 1 curves, there exists a Halphen surface $Y$ such that $G$  is conjugate to a subgroup of $\Bir(Y)$ and such that $G$ preserves the Halphen fibration. By Lemma \ref{halphenaut}, $G$ is therefore contained in $\aut(Y)$.
\end{proof}

\subsection{The Zariski topology and algebraic subgroups}\label{zariskitop}
Let $S$ be a complex projective variety. Then the group $\Bir(S)$ can be equipped with the {\it Zariski topology}, which we will now briefly recall.
Let $A$ be an algebraic variety and 
\[
f\colon A\times X\dashrightarrow A\times X
\] 
a birational map of the form $(a,x)\dashmapsto (a,f(a,x))$ that induces an isomorphism between open subsets $U$ and $V$ of $A\times X$ such that the projections from $U$ and from $V$ to $A$ are both surjective. In this way we obtain for each $a\in A$  an element of $\Bir(X)$ defined by $x\mapsto \pi_2(f(a,x))$, where $\pi_2$ is the second projection. Such a map $A\to \Bir(X)$ is called a {\it morphism} or {\it family of birational transformations parametrized by $A$}. The {\it Zariski topology} on $\Bir(X)$ is defined as the finest topology such that all morphisms $f\colon A\to \Bir(X)$ for all algebraic varieties $A$ are continuous with respect to the Zariski topology on $A$.

An {\it algebraic subgroup} of $\Bir(X)$ is the image of an algebraic group $G$ by a morphism $G\to \Bir(X)$ that is an injective group homomorphism. Algebraic groups are closed in the Zariski topology and of bounded degree in the case of $\Bir(X)=\Cr_n(\C)$. Conversely, closed subgroups of bounded degree in $\Cr_n(\C)$ are always algebraic subgroups with a unique algebraic group structure that is compatible with the Zariski topology (see \cite{MR3092478}). In \cite{MR3092478}, it is shown moreover, that all algebraic subgroups of $\Cr_n(\C)$ are linear. 

The sets $\Cr_n(\C)_{\leq d}\subset \Cr_n(\C)$ consisting of all birational transformations of degree $\leq d$ are closed with respect to the Zariski topology. Hence the closure of a subgroup of bounded degree in $\Cr_n(\C)$ is an algebraic subgroup. Every algebraic subgroup of $\Cr_2(\C)$ is contained in a maximal algebraic subgroups and the maximal algebraic subgroups have been classified. We will discuss this classification in detail in Section \ref{maxalgsubgroups}. From this classification one deduces in particular, that every bounded subgroup can be {\it regularized}, i.e\,every bounded subgroup is conjugate to a group of automorphisms of some projective surface. This last fact is true in all dimensions, by a Theorem of Weil (\cite{MR0074083}). 

\begin{lemma}\label{fixelliptic}
	Let $G\subset\Cr_2(\C)$ be a group that fixes a point in $\h$. Then the degree of all elements in $G$ is uniformly bounded and there exists a smooth projective variety $X$ and a birational transformation $\varphi\colon \p^2\dashrightarrow X$ such that $\varphi G\varphi^{-1}\subset \aut(X)$. 
\end{lemma}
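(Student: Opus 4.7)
The plan is to deduce the lemma in two stages: first I would bound the degrees of all elements of $G$ using the fact that $G$ acts by isometries on $\h$, and then I would invoke the regularization results already summarized in Section \ref{zariskitop}.

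For the boundedness step, let $p \in \h$ be a point fixed by $G$ and let $e_0 \in \h$ denote the vector corresponding to the class of a line in $\p^2$, which is the reference point used to pin down the distance function. Because every $g \in G$ acts as an isometry of $\h$ fixing $p$, the triangle inequality gives
\[
d(e_0, g_* e_0) \leq d(e_0, p) + d(g_*p, g_* e_0) = 2\, d(e_0, p).
\]
Translating via $d(u,v) = \arccosh(u \cdot v)$, the intersection pairing $e_0 \cdot g_*(e_0)$ is then bounded above by the constant $\cosh(2\, d(e_0, p))$, uniformly in $g$. Unwinding the definition of $g_*$ on $\ZZZ(\p^2)$ and the degree convention from Section \ref{degreesandtypes}, this pairing equals $\deg(g^{-1})$; since $G$ is closed under inversion, the same bound applies to $\deg(g)$ itself, so $G$ is a bounded subgroup of $\Cr_2(\C)$.

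The second assertion then follows immediately from material already recalled in Section \ref{zariskitop}: bounded subgroups of $\Cr_2(\C)$ have Zariski closure an algebraic subgroup, and by Blanc's classification of maximal algebraic subgroups of $\Cr_2(\C)$, or more generally by Weil's regularization theorem, every bounded subgroup of $\Cr_2(\C)$ is conjugate, via some birational map $\varphi\colon \p^2 \dashrightarrow X$, to a subgroup of $\aut(X)$ for a smooth projective surface $X$. Applied to $G$ this gives exactly the desired conclusion. The only part that requires a little care is the bookkeeping identifying $e_0 \cdot g_*(e_0)$ with $\deg(g^{\pm 1})$, i.e.\ matching the push-forward action on $\ZZZ(\p^2)$ with the chosen degree convention; once that identification is in place, both halves of the lemma fall out of standard material, and I do not foresee a serious obstacle.
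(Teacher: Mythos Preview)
Your proposal is correct and follows essentially the same line as the paper's own proof: bound $d(e_0,g_*e_0)$ by $2d(e_0,p)$ using that $g$ is an isometry fixing $p$, convert this into a uniform bound on $e_0\cdot g_*e_0=\deg(g^{\pm1})$, and then invoke the regularization of bounded subgroups recalled in Section~\ref{zariskitop}. The only cosmetic difference is that you pass through the triangle inequality and are slightly more explicit about the $\deg(g)$ versus $\deg(g^{-1})$ bookkeeping, which is harmless.
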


\begin{proof}
	Let $p\in\h$ be the fixed point of $G$ and denote by $e_0\in\h$ the class of a line in $\p^2$. Let $g\in G$ be arbitrary. Since the action of $G$ on $\h$ is isometric, $d(g(e_0), p)=d(e_0, p)$, in particular, $d(g(e_0), e_0)\leq 2 d(e_0, p)$. This implies
	$\langle g(e_0), e_0\rangle\leq \cosh(2 d(e_0, p))$ for all $g\in G$. As $\langle g(e_0), e_0\rangle=\deg(g)$, the degree of all elements in $G$ is uniformly bounded and thus can be regularized by the above observation.
\end{proof}

\subsection{Tori and monomial maps}
An algebraic torus of rank $n$ is an algebraic subgroup isomorphic to $(\C^*)^n$. The subgroup of diagonal automorphisms $D_2\subset\PGL_{3}(\C)=\aut(\p^2)$ is a torus of rank $2$. All algebraic tori in $\Cr_2(\C)$ are of rank $\leq 2$ and are conjugate in $\Cr_2(\C)$ to a subtorus of $D_2$ (\cite{bialynicki1967remarks}, \cite{MR0284446}).

An integer matrix $A=(a_{ij})\in M_2(\Z)$ determines a rational map $f_A$ of $\p^2$, which we define, with respect to local coordinates $(x,y)$, by
\[
f_A=(x^{a_{11}}y^{a_{12}},x^{a_{21}}y^{a_{22}}).
\] 
We have $f_A\circ f_B=f_{AB}$ for $A,B\in M_2(\Z)$ and $f_A$ is a birational transformation if and only if $A\in\GL_2(\Z)$. This yields an injective homomorphism $\GL_2(\Z)\to\Cr_{2}(\C)$. By abuse of notation, we will identify its image with $\GL_2(\Z)$. The normalizer of $D_2$ in $\Cr_2(\C)$ is the semidirect product 
\[
\norm_{\Cr_2(\C)}(D_2)=\GL_2(\Z)\ltimes D_2.
\]

If $M\in\GL_2(\Z)$ has spectral radius strictly larger than 1, the birational map $f_M$ is loxodromic. This yields examples of loxodromic elements that normalize an infinite elliptic subgroup. The following theorem by Cantat shows that, up to conjugacy, these are the only examples with this property:

\begin{theorem}[{\cite[Appendix]{MR2881312}}]\label{cantatappendix}
	Let $N$ be a subgroup of $\Cr_2(\C)$ containing at least one loxodromic element. Assume that there exists a short exact sequence 
	\[
	1\to A\to N\to B\to 1,
	\]
	where $A$ is infinite and of bounded degree. Then $N$ is conjugate to a subgroup of $\GL_2(\Z)\ltimes D_2$.
\end{theorem}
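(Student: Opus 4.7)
The plan is to show that, after a suitable birational conjugation in $\Cr_2(\C)$, the Zariski closure of $A$ is the maximal torus $D_2$, so that $N$ is forced into the normalizer $\norm_{\Cr_2(\C)}(D_2) = \GL_2(\Z)\ltimes D_2$ computed in Section~\ref{zariskitop}.

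Since $A$ is infinite and of bounded degree, its Zariski closure $\bar A$ in $\Cr_2(\C)$ is an algebraic subgroup of positive dimension, and its identity component $\bar A^0$ is a connected positive-dimensional algebraic subgroup. Because $N$ normalizes $A$, it normalizes $\bar A$ and hence $\bar A^0$; in particular the loxodromic element $f$ normalizes $\bar A^0$.

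The heart of the argument is to show that $\bar A^0$ is a rank-two torus, by eliminating the other possibilities for a connected algebraic subgroup of $\Cr_2(\C)$. First, if the unipotent radical $U$ of $\bar A^0$ were non-trivial, then on a smooth projective regularization the orbits of $U$ would sweep out a unique invariant rational fibration, preserved by all of $\bar A^0$ (since $U$ is normal in it) and, because $U$ is characteristic in $\bar A^0$, by all of $N$ as well; this would contradict Theorem~\ref{dim2}(3), as $f$ is loxodromic. Hence $\bar A^0$ is reductive. If $\bar A^0$ contained a non-trivial connected semisimple subgroup $S$, then $S$ would be, up to isogeny, one of $\PGL_2$, $\PGL_2\times\PGL_2$, or $\PGL_3$ acting on a rational surface; a case-by-case inspection shows that the normalizer of any such $S$ in $\Cr_2(\C)$ is a bounded subgroup and thus cannot contain the loxodromic $f$. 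Hence $\bar A^0$ is a torus. Finally, if $\bar A^0$ were a torus of rank one, then any element of $\Cr_2(\C)$ normalizing $\bar A^0$ would map its orbits to orbits and so preserve the rational orbit fibration on a regularization, once again contradicting the presence of $f$ in $N$.

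Therefore $\bar A^0$ is a rank-two torus, hence conjugate in $\Cr_2(\C)$ to $D_2$ by the classification recalled in Section~\ref{zariskitop}. Conjugating so that $\bar A^0 = D_2$, one obtains $N\subset \norm_{\Cr_2(\C)}(D_2)=\GL_2(\Z)\ltimes D_2$ as claimed. The main obstacle is justifying that the normalizer in $\Cr_2(\C)$ of a connected positive-dimensional semisimple subgroup contains no loxodromic element; this requires going through the short list of semisimple algebraic subgroups of $\Cr_2(\C)$ and exploiting the fact that each such subgroup acts with Zariski-dense orbit on a rational surface and preserves only a controlled family of divisor classes, all of which must also be respected by any element of its normalizer.
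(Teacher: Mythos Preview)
This theorem is quoted in the paper from \cite[Appendix]{MR2881312} and is not proved here; the paper only \emph{uses} it (for instance as the base case in the proof of Theorem~\ref{ellipticnorm}). So there is no proof in the present paper to compare your attempt with. That said, your outline is the standard strategy and is essentially what Cantat's original argument does: pass to the Zariski closure $\bar A$, observe that $N$ normalises the connected component $\bar A^0$, and then run through the possible structures of a positive-dimensional connected algebraic subgroup of $\Cr_2(\C)$ to force $\bar A^0$ to be a two-dimensional torus.

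Two points deserve more care than you give them. First, in the unipotent step you assert that the orbits of $U$ ``sweep out a unique invariant rational fibration''. This is false when $U$ has a two-dimensional (open) orbit: if for instance the characteristic abelian subgroup at the bottom of the central series of $U$ is isomorphic to $\C^2$ acting by translations on $\A^2$, there is no invariant fibration. You still win in that case, because the normaliser of such a $\C^2$ in $\Cr_2(\C)$ is the affine group $\GL_2(\C)\ltimes\C^2$, which is bounded and hence cannot contain $f$; but this needs to be said explicitly. The cleanest phrasing is to replace $U$ by the last non-trivial term of its lower central series, an abelian unipotent group normalised by $N$, and then split into the two cases ``generic orbits of dimension~$1$'' (fibration, contradiction) and ``open orbit'' (normaliser bounded, contradiction).

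Second, in the semisimple step you correctly identify the obstacle but do not dispatch it. The right dichotomy is again by orbit dimension: if the derived subgroup $S$ of $\bar A^0$ has one-dimensional generic orbits on a regularization, those orbits define an $N$-invariant rational fibration, contradicting the existence of $f$; if $S$ has an open orbit, then (using that $\mathrm{Out}(S)$ is finite and the centraliser of $S$ in $\Cr_2(\C)$ is finite for each of the three possible $S$) the normaliser of $S$ is a bounded group, again a contradiction. Once these two cases are handled, your reduction to a rank-two torus and the identification of its normaliser with $\GL_2(\Z)\ltimes D_2$ goes through exactly as you wrote.
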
 

In Theorem \ref{ellipticnorm}, we will generalize Theorem \ref{cantatappendix} to the case where $A$ is a group of elliptic elements.

\begin{lemma}\label{densed2}
	Let $m\in\GL_2(\Z)\subset\Cr_2(\C)$ be a loxodromic monomial map and $\Delta_2\subset D_2$ an infinite subgroup that is normalized by $m$. Then $\Delta_2$ is dense in $D_2$ with respect to the Zariski topology.
\end{lemma}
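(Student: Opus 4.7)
The plan is to analyze the Zariski closure $\overline{\Delta_2}$ of $\Delta_2$ inside $D_2\simeq(\C^*)^2$ and rule out, using the dynamics of $m$, every possibility except $\overline{\Delta_2}=D_2$.

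First I would observe that since conjugation by $m$ preserves $D_2$ (as $m$ normalizes $D_2$) and is an automorphism of the algebraic group $D_2$, it preserves the closed algebraic subgroup $\overline{\Delta_2}$ and also its neutral component $(\overline{\Delta_2})^0$. The classification of closed algebraic subgroups of $(\C^*)^2$ says that the only options for $(\overline{\Delta_2})^0$ are the trivial group, a one-dimensional subtorus of the form $T_{(a,b)}\coloneqq\{(x,y)\in(\C^*)^2\mid x^a y^b=1\}$ for some primitive $(a,b)\in\Z^2\setminus\{0\}$, and all of $D_2$. Since $\Delta_2$ is infinite, $\overline{\Delta_2}$ has positive dimension, so the neutral component is either $D_2$ (giving what we want) or $T_{(a,b)}$ for some primitive lattice vector $(a,b)$.

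Next I would translate the statement that $m$-conjugation preserves $T_{(a,b)}$ into a linear-algebraic condition. The action of $\GL_2(\Z)$ on $D_2$ by conjugation in $\Cr_2(\C)$ corresponds, on the character lattice $X^*(D_2)\cong\Z^2$, to an action by an element of $\GL_2(\Z)$ with the same characteristic polynomial as $M$ (indeed, it is $M^{-T}$ up to a convention). So $m$ preserving $T_{(a,b)}$ forces the integer vector $(a,b)$ to be an eigenvector of this matrix, and hence of $M$.

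For the final step I would use that $M\in\GL_2(\Z)$ is loxodromic as a monomial map, which is equivalent to saying that $M$ has spectral radius strictly greater than $1$. Together with $\det M=\pm1$, this implies that the two eigenvalues of $M$ are real, distinct, and of modulus different from $1$; in particular they are irrational quadratic algebraic numbers (the characteristic polynomial has integer coefficients but no rational roots). Consequently no nonzero vector in $\Q^2$, and a fortiori none in $\Z^2$, can be an eigenvector of $M$, contradicting the previous step. Hence $(\overline{\Delta_2})^0=D_2$ and a fortiori $\overline{\Delta_2}=D_2$, proving the lemma.

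The main point to get right is the bookkeeping of the preceding paragraph — making sure the $\GL_2(\Z)$-action on characters that corresponds to $m$-conjugation on $D_2$ really has the same (ir)rationality properties as $M$ itself, so that the absence of rational eigenvectors of $M$ is exactly what kills the one-dimensional case. Everything else is a short application of the structure of closed subgroups of a two-dimensional torus.
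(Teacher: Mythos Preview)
Your proof is correct but takes a different route from the paper. The paper argues geometrically: if the neutral component $\overline{\Delta}_2^0$ does not have a dense orbit on $\p^2$, then its generic orbits are one-dimensional; since $m$ normalizes $\overline{\Delta}_2^0$ it permutes these orbits, hence preserves the rational fibration they define, contradicting loxodromicity via Theorem~\ref{dim2}. You instead work directly with the character lattice: a proper positive-dimensional subtorus of $D_2$ is the kernel of a primitive character $(a,b)\in\Z^2$, and invariance under $m$-conjugation forces $(a,b)$ to be an integral eigenvector of (a matrix with the same characteristic polynomial as) $M$, which is impossible because a loxodromic $M\in\GL_2(\Z)$ has irrational eigenvalues. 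Your approach is more elementary and self-contained---it avoids invoking the degree-growth/fibration classification---while the paper's argument is shorter on the page but leans on Theorem~\ref{dim2} as a black box.
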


\begin{proof}
	Let $\overline{\Delta}_2^0$ be the neutral component of the Zariski-closure of $\Delta_2$. If $\overline{\Delta}_2^0$ has a dense orbit on $\p^2$ we are done. Otherwise, the generic orbits of $\overline{\Delta}_2^0$ are of dimension $1$. Since $m$ normalizes $\overline{\Delta}_2^0$ , it preserves its orbits. This implies in particular that $m$ preserves a rational fibration, which is a contradiction to $m$ being loxodromic.
\end{proof}

\subsection{Small cancellation}
Small cancellation has been  developed in various contexts. In \cite{cantat2013normal} the authors use it to show that $\Cr_2(\C)$ is not simple. In this section, we will briefly recall the results needed for our purposes.

 Let $\epsilon,B>0$. Two geodesic lines $L$ and $L'$ in $\h$ are called $(\epsilon, B)$-close, if the diameter of the set $S=\{x\in L\mid d(x, L')\leq \epsilon\}$ is at least $B$, i.e.\,there exist two points in $S$ with distance at least $B$.

\begin{definition}
	Let $G\subset\Cr_2(\C)$ be a subgroup. A loxodromic element $g\in G$ is called {\it rigid in $G$} if there exist $\epsilon, B>0$ such that for every element $h\in G$ we have: $h(\Ax(g))$ is $(\epsilon, B)$-close to $\Ax(g)$ if and only if $h(\Ax(g))=\Ax(g)$.
\end{definition}

\begin{definition}
	Let $G\subset\Cr_2(\C)$ be a subgroup. A loxodromic element $g\in G$ is called {\it tight in $G$} if it is rigid and if, for all $h\in G$, $h(\Ax(g))=\Ax(g)$ implies $hgh^{-1}=g$ or $hgh^{-1}=g^{-1}$.
\end{definition}

\begin{theorem}[{\cite[Theorem 2.10]{cantat2013normal}}]\label{smallcanc}
	Let $G\subset \Cr_2(\C)$ be a subgroup and $g\in G$ an element that is tight in $G$. Then every element $h$ in $\left<\left<g \right>\right>\setminus\{\id\}$, the smallest normal subgroup of $G$ containing $g$, satisfies the following alternative: Either $h$ is a conjugate of $g$ or $h$ is a loxodromic element with strictly larger translation length than $g$. In particular, for $n\geq 2$, $g$ is not contained in $\left<\left<g^n \right>\right>$.
\end{theorem}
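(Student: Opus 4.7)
The strategy is the standard small cancellation theory over a Gromov-hyperbolic space, applied to the action of $G$ on $\h$ with the family of axes $\mathcal{F}=\{h\Ax(g)\mid h\in G\}$ serving as the ``relators''. Tightness encodes exactly the small cancellation condition: rigidity says that any two distinct geodesics in $\mathcal{F}$ are not $(\epsilon,B)$-close, so their overlap has bounded diameter, while the second clause of tightness controls the axis stabilizer. My first step is to put an element $h\in\langle\!\langle g\rangle\!\rangle\setminus\{\id\}$ into a reduced normal form $h=w_1g^{n_1}w_1^{-1}\cdots w_k g^{n_k}w_k^{-1}$ with $n_i\neq 0$ and consecutive axes distinct: $w_i\Ax(g)\neq w_{i+1}\Ax(g)$. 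Such a reduction is always possible because whenever two consecutive axes coincide, the tight property identifies $w_{i+1}^{-1}w_i$ with an element of $\stab(\Ax(g))$ that sends $g^{n_i}$ to $g^{\pm n_i}$, so the two blocks can be fused into a single conjugate power of $g$.

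If $k=1$ and $|n_1|=1$ the element $h$ is a conjugate of $g^{\pm 1}$, which by tightness falls into the conjugacy class of $g$ (using a stabilizer element of $\Ax(g)$ if necessary); if $|n_1|\geq 2$ then $L(h)=|n_1|L(g)>L(g)$ and $h$ is loxodromic. The case $k\geq 2$ is the geometric heart of the proof. I would construct a piecewise geodesic path that concatenates long segments of the axes $w_i\Ax(g)$, along which the conjugate $w_ig^{n_i}w_i^{-1}$ translates by $|n_i|L(g)$, joined by short transition arcs. Using hyperbolicity of $\h$ together with the bounded-overlap property supplied by rigidity, one shows that this concatenation is a quasi-geodesic and that $h$ translates one of its endpoints by at least $\sum_i|n_i|L(g)-2kB$; after possibly enlarging the rigidity constants by passing to a suitable power, this yields that $h$ is loxodromic with $L(h)>L(g)$.

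For the ``in particular'' statement I would apply the main alternative to $g^n$, which inherits tightness from $g$ because $\Ax(g^n)=\Ax(g)$. If $g\in\langle\!\langle g^n\rangle\!\rangle\setminus\{\id\}$, the dichotomy forces either $g$ to be a conjugate of $g^n$, which is impossible since translation length is a conjugacy invariant and $L(g)\neq nL(g)$, or $L(g)>L(g^n)=nL(g)$, which is absurd for $n\geq 2$. The main obstacle is the $k\geq 2$ step: proving that a concatenation of long segments along pairwise distinct, weakly-overlapping geodesics in $\h$ is a quasi-geodesic with constants sharp enough to give a definite lower bound on translation length. This is the technical core of small cancellation in hyperbolic geometry and requires delicate thin-triangle and projection estimates of the type carried out in \cite{cantat2013normal}.
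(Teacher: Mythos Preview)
The paper does not prove this theorem at all: it is quoted verbatim as \cite[Theorem~2.10]{cantat2013normal} and used as a black box, so there is no ``paper's own proof'' to compare against. Your sketch is, in outline, the argument carried out in \cite{cantat2013normal}: normal-form reduction of an element of $\langle\!\langle g\rangle\!\rangle$, followed by a quasi-geodesic estimate for the concatenated axis segments using the bounded-overlap (small cancellation) condition furnished by rigidity. You correctly identify the $k\geq 2$ step as the technical core and defer to the source for the thin-triangle estimates; that is appropriate here, since this result is imported rather than reproved.

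One genuine slip: in your $k=1$, $|n_1|=1$ case you claim that a conjugate of $g^{-1}$ ``by tightness falls into the conjugacy class of $g$ (using a stabilizer element of $\Ax(g)$ if necessary)''. Tightness is an implication in the other direction: it says that \emph{if} some $h\in G$ fixes $\Ax(g)$ then $hgh^{-1}=g^{\pm 1}$; it does not produce an element conjugating $g$ to $g^{-1}$. Indeed $g^{-1}\in\langle\!\langle g\rangle\!\rangle$ has $L(g^{-1})=L(g)$, so the dichotomy as literally stated here can fail for $h=g^{-1}$ unless one reads ``conjugate of $g$'' as ``conjugate of $g^{\pm 1}$'', which is how the original statement in \cite{cantat2013normal} is formulated. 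Your argument for the ``in particular'' clause is unaffected by this, since you only use the translation-length comparison, and $L(g)=L(g^n)$ is already impossible for $n\geq 2$.
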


In \cite{Shepherd-Barron:2013qq}, Shepherd-Barron classifies tight elements in $G=\Cr_2(\C)$ using Theorem \ref{cantatappendix}:

\begin{theorem}[\cite{Shepherd-Barron:2013qq}]\label{shep}
	In $\Cr_2(\C)$ every loxodromic element is rigid. If $g$ is conjugate to a monomial map, then no power of $g$ is tight. In all the other cases, there exists an integer $n$ such that $g^n$ is tight.
\end{theorem}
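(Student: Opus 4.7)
The plan is to treat the three assertions in turn.

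\textbf{Rigidity.} Every loxodromic element of $\Cr_2(\C)$ has translation length bounded below by some $\log\lambda_0>0$, as the set of dynamical degrees of birational surface maps is well-ordered (Blanc and Cantat). Rigidity then follows by a standard hyperbolic-space commutator argument: if $h(\Ax(g))$ is $(\epsilon,B)$-close to $\Ax(g)$ but distinct from it, then a commutator of the form $[h,g^m]$ with $m$ comparable to $B/L(g)$ is a loxodromic isometry of $\h$ whose translation length tends to $0$ as $\epsilon\to 0$ and $B\to\infty$. Choosing $\epsilon,B$ in terms of $L(g)$ and $\log\lambda_0$ forces $h(\Ax(g))=\Ax(g)$.

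\textbf{Monomial case.} Suppose $g$ is conjugate to $f_M$ with $M\in\GL_2(\Z)$ of spectral radius $>1$, so $g\in\GL_2(\Z)\ltimes D_2$. The torus $D_2$ preserves $\Ax(g)$ as a set, since it acts trivially on the portion of the Picard--Manin space determining the axis. Conjugation by $g^n$ acts on $D_2$ via $M^n\in\GL_2(\Z)$, which is a nontrivial automorphism of $D_2$; hence some $d\in D_2$ does not centralize $g^n$. As $dg^nd^{-1}$ lies in the coset $M^n\cdot D_2$ while $g^{-n}\in M^{-n}\cdot D_2$ and $M^{2n}\neq 1$, one also has $dg^nd^{-1}\neq g^{-n}$. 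Thus $d(\Ax(g^n))=\Ax(g^n)$ yet $dg^nd^{-1}\notin\{g^n,g^{-n}\}$, showing that no power of $g$ is tight.

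\textbf{Non-monomial case.} Let $g$ be loxodromic, not conjugate to a monomial map. Set $N=\stab(\Ax(g))$ and let $A\subset N$ be the pointwise stabilizer of $\Ax(g)$. Each $a\in A$ fixes points of $\h$, so by Lemma \ref{fixelliptic} the group $A$ consists of elliptic elements of uniformly bounded degree; $A$ is normal in $N$ since conjugation preserves axes. Were $A$ infinite, Theorem \ref{cantatappendix} applied to $1\to A\to N\to N/A\to 1$ would force $g$ into $\GL_2(\Z)\ltimes D_2$, contradicting the hypothesis; hence $A$ is finite. The quotient $N/A$ embeds faithfully into $\mathrm{Isom}(\Ax(g))=\R\rtimes\Z/2$, and its translation subgroup is discrete by the lower bound $\log\lambda_0$. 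Thus $N/A$ is virtually cyclic with $\langle\bar g\rangle$ of finite index. I would then pick $n$ so that $g^n$ generates the translation subgroup of $N/A$ and centralizes the finite group $A$ (both possible since $\aut(A)$ is finite, so the conjugation action of $g$ on $A$ has finite order). For any $h\in N$, the element $hg^nh^{-1}g^{\mp n}$ lies in $A$, with the sign dictated by whether $h$ acts on $\Ax(g)$ as translation or reflection; a cocycle computation using that $g^n$ commutes with $A$ shows it equals the identity, so $hg^nh^{-1}\in\{g^n,g^{-n}\}$ and $g^n$ is tight.

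The main obstacle is this final cocycle-killing step: ensuring that $n$ can be chosen uniformly so that $hg^nh^{-1}=g^{\pm n}$ on the nose, not merely modulo $A$. Once the finiteness of $A$ is supplied by Theorem \ref{cantatappendix}, the computation reduces to elementary group theory inside the finite-by-virtually-cyclic group $N$. The essential external inputs are Theorem \ref{cantatappendix} and the well-ordering of dynamical degrees, which together give both rigidity and the discreteness of the translation subgroup of $N/A$.
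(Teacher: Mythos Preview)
The paper does not supply its own proof of Theorem~\ref{shep}; it is quoted from \cite{Shepherd-Barron:2013qq}, and the paper only remarks afterwards that Theorem~\ref{sheprel} follows from that reference together with Lemma~\ref{densed2}. So there is no ``paper's proof'' to compare against. That said, your strategy is exactly the one used by Shepherd--Barron: rigidity comes from the spectral gap for dynamical degrees (Blanc--Cantat), and the dichotomy tight/monomial is obtained by analysing the stabilizer $N=\stab(\Ax(g))$ via Theorem~\ref{cantatappendix}.

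Your monomial argument is correct as written, and the rigidity sketch is the standard one.

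In the non-monomial case your architecture is right --- $A$ finite by Theorem~\ref{cantatappendix}, hence $N$ virtually cyclic --- but the specific choice of $n$ you propose does not work. First, $\bar g$ need not generate the translation subgroup of $N/A$ (e.g.\ $g$ could be a square of another loxodromic element with the same axis), so no power of $g$ will generate it. Second, ``$g^n$ centralizes $A$'' alone does not force the cocycle $h\mapsto hg^nh^{-1}g^{\mp n}$ to vanish: take $N=\Z/4\Z\rtimes\Z$ with the generator $t$ of $\Z$ acting by inversion, $A=\Z/4\Z=\langle a\rangle$, and $g=at^2$. Then $g$ already centralizes $A$, yet $tgt^{-1}=a^{-1}t^{2}=a^{2}g\notin\{g,g^{-1}\}$. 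The clean fix is the one you allude to in your last paragraph but do not carry out: since $N$ is infinite virtually cyclic it contains a \emph{normal} infinite cyclic subgroup $Z$ (the normal core of any finite-index copy of $\Z$), some power $g^n$ lies in $Z$, and conjugation by any $h\in N$ acts on $Z\cong\Z$ by $\pm 1$, giving $hg^nh^{-1}=g^{\pm n}$ directly. With this replacement your proof is complete.
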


Note that if $G\subset\Cr_2(\C)$ is a subgroup and $g\in \Cr_2(\C)$ is a rigid element, then $g$ is rigid in $G$ as well. The same is true for tight elements. However, there might be loxodromic elements $g\in G$ such that $g$ is tight in $G$ but not in $\Cr_2(\C)$. From the proof of Theorem \ref{shep} (see \cite[p.18]{Shepherd-Barron:2013qq}) and Lemma \ref{densed2} the following Theorem follows:

\begin{theorem}\label{sheprel}
	Let $G\subset\Cr_2(\C)$ be a subgroup and $g\in G$ a loxodromic element. The following two conditions are equivalent:
	\begin{enumerate}
		\item no power of $g$ is tight in $G$;
		\item there is a subgroup $\Delta_2\subset G$ that is normalized by $g$ and a birational transformation $f\in\Cr_2(\C)$ such that $f\Delta_2f^{-1}\subset D_2$ is a dense subgroup and $fgf^{-1}\in\GL_2(\Z)\ltimes D_2$.
	\end{enumerate}
\end{theorem}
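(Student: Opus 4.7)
The plan is to handle the two implications separately, both hinging on the identity
\[
h g^n h^{-1} = g^n \cdot c_n(h), \qquad c_n(h) \coloneqq (m^{-n} h m^n)\, h^{-1},
\]
which holds for every $h \in D_2$ and $n \in \Z$ whenever $g$ is conjugated into $\GL_2(\Z) \ltimes D_2$ with linear part $m \in \GL_2(\Z)$ loxodromic. Because $D_2$ is abelian, $c_n$ is a homomorphism $D_2 \to D_2$; because $m^n$ acts nontrivially on $D_2$, its kernel is a finite and hence proper Zariski-closed subgroup.

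For $(2) \Rightarrow (1)$, conjugate by $f$ so that $g \in \GL_2(\Z) \ltimes D_2$ and $\Delta_2 \subset D_2$ is Zariski-dense. Every element of $D_2$ preserves $\Ax(g)$: the axis of any loxodromic element of $\GL_2(\Z) \ltimes D_2$ lies in the ``toric'' subspace of $\ZZZ(\p^2)$ on which $D_2$ acts trivially, a geometric fact underpinning Theorem \ref{shep}. For every $n \geq 1$, density forces $\Delta_2 \not\subset \ker(c_n)$, so some $h \in \Delta_2 \subset G$ satisfies $hg^n h^{-1} = g^n c_n(h) \neq g^n$. The alternative $hg^n h^{-1} = g^{-n}$ would force $c_n(h) = g^{-2n} \in D_2$, whence $m^{2n} = \id$, contradicting loxodromicity. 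Thus $h$ stabilizes $\Ax(g^n) = \Ax(g)$ without $hg^n h^{-1} \in \{g^{\pm n}\}$, and no power of $g$ is tight in $G$.

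For $(1) \Rightarrow (2)$, assume no power of $g$ is tight in $G$. Since tightness in $\Cr_2(\C)$ implies tightness in $G$, no power of $g$ is tight in $\Cr_2(\C)$; Theorem \ref{shep} then provides $f \in \Cr_2(\C)$ with $fgf^{-1} = m \in \GL_2(\Z) \subset \GL_2(\Z) \ltimes D_2$. Set $\Delta_2 \coloneqq G \cap f^{-1} D_2 f$; this is a subgroup of $G$, and since $m$ normalizes $D_2$ the element $g$ normalizes $\Delta_2$. By Lemma \ref{densed2} it is enough to show $\Delta_2$ is infinite. Suppose for contradiction that $\Delta_2$ is finite. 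Shepherd-Barron's analysis, resting on Theorem \ref{cantatappendix}, gives $\stab_{\Cr_2(\C)}(\Ax(m)) \subset \GL_2(\Z) \ltimes D_2$ up to a finite subgroup fixing $\Ax(m)$ pointwise; the centralizer of $\langle m \rangle$ in $\GL_2(\Z)$ is commensurable with $\langle m \rangle$, and any axis-stabilizing involution inverts $m$. Consequently every element of $\stab_G(\Ax(g))$ has, modulo a finite group, the form $g^k \delta$ or $\sigma g^k \delta$ with $\delta \in f\Delta_2 f^{-1}$. The $\GL_2(\Z)$-action on the finite group $f\Delta_2 f^{-1}$ factors through a finite quotient, so some $n \geq 1$ satisfies $c_n(\delta) = \id$ for every $\delta$; the identity above (and its analogue for inverting elements) then gives $h g^n h^{-1} \in \{g^{\pm n}\}$ throughout $\stab_G(\Ax(g^n))$, so $g^n$ is tight in $G$, a contradiction.

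The main obstacle lies in the structural input used in the last paragraph: one must know that all axis-preserving elements in $\Cr_2(\C)$ lie in the monomial normalizer $\GL_2(\Z) \ltimes D_2$ up to finite error, which is exactly the content extracted from Theorem \ref{cantatappendix} in Shepherd-Barron's proof of Theorem \ref{shep}.
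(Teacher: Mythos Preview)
Your proof follows the same route as the paper, which does not spell out an argument but simply cites Shepherd-Barron's proof of Theorem \ref{shep} together with Lemma \ref{densed2}; you have essentially unpacked what that citation entails. The direction $(2) \Rightarrow (1)$ is fine, and in $(1) \Rightarrow (2)$ the reduction to showing that $\Delta_2 := G \cap f^{-1}D_2 f$ is infinite is correct, as is the structural input that $\stab_{\Cr_2(\C)}(\Ax(m))$ sits inside $\GL_2(\Z) \ltimes D_2$ (this is exactly what one extracts from Shepherd-Barron via Theorem \ref{cantatappendix}, since $D_2$ itself lies in the pointwise stabilizer of $\Ax(m)$).

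The one place that needs tightening is the final contradiction. You choose $n$ so that $c_n(\delta)=1$ for all $\delta$ in the finite group $f\Delta_2 f^{-1}$, and then assert $hg^n h^{-1} \in \{g^{\pm n}\}$ for \emph{every} $h \in \stab_G(\Ax(g))$. But an arbitrary such $h$, written as $b\cdot e$ with $b \in \GL_2(\Z)$ and $e \in D_2$, need not have $e \in f\Delta_2 f^{-1}$: only the \emph{kernel} of the projection to $\GL_2(\Z)$ is $f\Delta_2 f^{-1}$, not the set of $D_2$-components. The phrase ``modulo a finite group'' papers over this, since the $D_2$-parts of your coset representatives $\tilde a_i$ have no reason to be fixed by any $m^n$. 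The clean fix bypasses the $c_n$-identity at this point: from the finiteness of $\Delta_2$ and of $\bar H/\langle m\rangle$ you get that $\langle m \rangle$ has finite index in $S := \stab_{fGf^{-1}}(\Ax(m))$; take $n_0$ with $\langle m^{n_0}\rangle$ equal to the normal core of $\langle m \rangle$ in $S$. Then $S$ acts on $\langle m^{n_0}\rangle \cong \Z$ by $\pm 1$, so $hm^{n_0}h^{-1}=m^{\pm n_0}$ for every $h \in S$, and $g^{n_0}$ is tight in $G$ --- the desired contradiction.
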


\section{Finitely generated groups of elliptic elements}\label{finitegen}

In \cite{MR2811600}, Cantat studied finitely generated subgroups of elliptic elements. We will need the following result:

\begin{theorem}[{\cite[Proposition 6.14]{MR2811600}}]\label{cantat}
	Let $\Gamma$ be a finitely generated subgroup of elliptic elements. Then either $\Gamma$ is bounded or $\Gamma$ preserves a rational fibration, i.e.\,$\Gamma$ is conjugate to a subgroup of $\J$.
\end{theorem}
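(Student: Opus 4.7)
The plan is to apply Theorem \ref{fixnotlox}, which, since $\Gamma$ contains no loxodromic element, yields a common fixed point $q \in \h \cup \partial\h$. If $q$ can be chosen in $\h$, then Lemma \ref{fixelliptic} finishes the argument directly: the degrees of elements of $\Gamma$ are bounded by $\cosh(2\,d(e_0,q))$, so $\Gamma$ is in case (1).

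Assume instead that no fixed point of $\Gamma$ lies in $\h$, so any fixed point $q$ lies in $\partial\h$ and is represented by an isotropic ray $\R_{>0}\,v \subset \ZZZ(\p^2)$. The next step is to realise $v$, up to scale, as the class of a fiber on a smooth projective model. Here the finite generation of $\Gamma = \left<g_1,\dots,g_n\right>$ enters crucially: by Theorem \ref{dim2}(1) each $g_i$ has a positive power conjugate to an automorphism of some smooth projective surface, and taking a common blow-up $X \to \p^2$ that resolves the base-points of all the finitely many $g_i$ reduces the relevant piece of the $\Gamma$-action to the finite-dimensional lattice $\NS(X)$. There the fixed isotropic direction becomes rational, yielding a class $F \in \NS(X)$ with $F^2 = 0$ preserved by a conjugate of $\Gamma$, and hence a $\Gamma$-invariant fibration $\pi \colon X \dashrightarrow B$.

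It remains to analyse the type of $\pi$. If $\pi$ is rational, we are immediately in case (2). If instead $\pi$ is a fibration of genus $1$ curves, Lemma \ref{halphenaut} embeds a conjugate of $\Gamma$ into $\aut(Y)$ for some Halphen surface $Y$; by Theorem \ref{halphenstructure}, $\aut(Y)$ has finite image in $\PGL_2(\C)$ under a homomorphism $\rho$ whose kernel is an extension of a finitely generated abelian group of rank $\leq 8$ by a cyclic group of order dividing $24$. Any element of infinite order in $\ker(\rho)$ would translate the generic fibers by a non-torsion section and so satisfy $\deg_H(g^n) \sim c n^2$, making it parabolic by Theorem \ref{dim2}(2b) and contradicting ellipticity. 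Hence $\Gamma \cap \ker(\rho)$ is a torsion subgroup of a group whose torsion part is finite, so $\Gamma \cap \ker(\rho)$ is finite; as it has finite index in the finitely generated group $\Gamma$, the whole of $\Gamma$ is finite, hence bounded, and we are again in case (1).

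The main obstacle is the passage from an abstract $\Gamma$-fixed isotropic class in the infinite-dimensional completion $\ZZZ(\p^2)$ to a genuine fiber class on a finite-dimensional smooth projective model. This is precisely the step that fails without finite generation, as witnessed by Example \ref{wright}, and it constitutes the technical heart of the argument.
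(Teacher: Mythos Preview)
The paper does not give its own proof of this statement: Theorem~\ref{cantat} is quoted verbatim from \cite[Proposition~6.14]{MR2811600} and used as a black box, so there is no in-paper argument to compare your proposal against. I can therefore only comment on the internal soundness of your outline.

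The global architecture you describe is the right one, and your treatment of the Halphen case in the last full paragraph is correct: an infinite-order element of $\ker(\rho)$ has a power lying in the Mordell--Weil lattice, hence is parabolic by Theorem~\ref{dim2}(2b), so $\Gamma\cap\ker(\rho)$ is finite and $\Gamma$ itself is finite.

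The genuine gap is exactly where you say the ``technical heart'' lies, and the two sentences you offer there do not bridge it. Blowing up the base-points of the finitely many generators $g_1,\dots,g_n$ produces a surface $X$ on which each $g_i$ becomes a \emph{morphism} $X\to\p^2$, not an automorphism of $X$; to turn $g_i$ into an automorphism one must also contract curves, which changes the surface and destroys the common model. Hence $\Gamma$ has no reason to preserve the finite-dimensional sublattice $\NS(X)\subset\ZZZ(\p^2)$, and there is no ``reduction of the relevant piece of the $\Gamma$-action to $\NS(X)$''. Invoking Theorem~\ref{dim2}(1) does not help either: it regularises a \emph{power} of each $g_i$ on a surface $S_i'$ depending on $i$, with no common model for the whole group. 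Consequently nothing you have written forces the $\Gamma$-fixed isotropic ray in the completion $\ZZZ(\p^2)$ to lie in $\NS(X)\otimes\R$ for any model $X$, let alone to be rational. This rationality is precisely the substantive content of Cantat's Proposition~6.14, and it requires a genuinely different argument (in Cantat's paper, one exploits that elliptic isometries fixing a boundary point act with eigenvalue $1$ on the corresponding isotropic line, together with the integral structure of the Picard--Manin lattice and finite generation, to force the fixed class into some $\NS(X)\otimes\Q$). As written, your middle paragraph asserts the conclusion of the hard step rather than proving it.
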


Example \ref{wright} shows that the condition that $\Gamma$ is finitely generated in Theorem~\ref{cantat} is necessary. It is an open question, which has been asked in \cite{MR2811600} and \cite{MR2648673}, whether there exist finitely generated groups of elliptic elements that are not bounded. 

\begin{lemma}\label{boundedorfib}
	Let $G\subset\Cr_2(\C)$ be a group of elliptic elements. Then one of the following is true:
	\begin{enumerate}
		\item[(a)] $G$ preserves a fibration and is therefore conjugate to a subgroup of the de Jonqui\`eres group $\J$ or to a subgroup of $\aut(X)$, where $X$ is a Halphen surface.
		\item[(b)] Every finitely generated subgroup of $G$ is bounded.
	\end{enumerate} 
	Moreover, if $G$ fixes a point $p\in\partial\h$ that does not correspond to the class of a rational fibration $\pi\colon \p\dashrightarrow\p$, then we are in case (b).
\end{lemma}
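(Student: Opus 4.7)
The plan is to combine Theorem \ref{fixnotlox} with Cantat's dichotomy (Theorem \ref{cantat}) via a geodesic argument in $\h$. Since $G$ contains no loxodromic elements, Theorem \ref{fixnotlox} supplies a fixed point $p\in\h\cup\partial\h$. If $p\in\h$, Lemma \ref{fixelliptic} immediately yields that $G$ has uniformly bounded degree, so every finitely generated subgroup of $G$ is bounded and (b) holds. It remains to treat the case $p\in\partial\h$.

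Assume $p\in\partial\h$. If every finitely generated subgroup of $G$ is bounded, (b) holds; otherwise pick a finitely generated unbounded subgroup $H\subset G$. By Theorem \ref{cantat}, $H$ preserves a rational fibration whose fiber class $F_H$ satisfies $F_H^2=0$, so the ray through $F_H$ in the light cone of $\ZZZ(\p^2)$ defines a boundary point $q_H\in\partial\h$ fixed by $H$. The crux is to show $q_H=p$. Suppose not: then $H$ fixes two distinct points of $\partial\h$, so it preserves the geodesic $\gamma\subset\h$ joining them. Every $h\in H$ preserves $\gamma$ and fixes both endpoints at infinity, so it acts on $\gamma\cong\R$ as an isometry stabilizing both ends, i.e.\,as a translation (a reflection would swap the endpoints). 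Being elliptic, $h$ cannot translate with nonzero length, so it fixes $\gamma$ pointwise. Therefore $H$ admits a fixed point in $\h$, contradicting Lemma \ref{fixelliptic} together with the unboundedness of $H$.

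Hence $q_H=p$, and $p$ is represented by a fiber class of a rational fibration. To conclude case (a), I argue that $G$ itself preserves this fibration: since $G$ acts on $\ZZZ(\p^2)$ by isometries that stabilize both the integer lattice $\ZZ(\p^2)$ and the forward light cone, the stabilizer of the ray through $F_H$ must fix its primitive integer generator, and thus $G$ fixes $F_H$. Consequently $G$ permutes the fibers of the corresponding rational fibration and, up to conjugacy, is contained in $\J$. The moreover statement is then immediate as a contrapositive: if the fixed boundary point $p$ is not the class of a rational fibration $\p^2\dashrightarrow\p^1$, the preceding argument forbids any unbounded finitely generated subgroup of $G$, forcing case (b). The most delicate step I anticipate is the final one, where I must show that $G$ fixes $F_H$ and not merely its line; this should follow cleanly from orientation preservation on the forward light cone combined with the discreteness of $\ZZ(\p^2)$ inside $\ZZZ(\p^2)$.
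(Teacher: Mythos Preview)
Your argument is correct and follows essentially the same route as the paper: invoke Theorem~\ref{fixnotlox} for a global fixed point, apply Theorem~\ref{cantat} to a finitely generated subgroup, and use the two-boundary-fixed-points geodesic argument together with Lemma~\ref{fixelliptic} to force a contradiction. The only difference is cosmetic---you case-split on whether (b) fails while the paper case-splits on whether $p$ is a fibration class---and you are in fact more careful than the paper about why fixing the isotropic ray forces $G$ to fix the primitive integral class $F_H$ itself.
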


\begin{proof}
	By Theorem \ref{fixnotlox}, $G$ fixes a point $p\in\h\cup \partial\h$. If $p\in\h$, then $G$ is bounded and we are done. If $p\in\partial\h$, then either $p$ corresponds to the class of a general fiber of some fibration $\pi\colon Y\to\p^1$, where $Y$ is a rational surface. In this case, $G$ preserves this fibration and is therefore conjugate to a subgroup of $\J$ (if the fibration is rational) or to a subgroup of $\aut(X)$, where $X$ is a Halphen surface (if the fibration consists of curves of genus 1). Or $p$ does not correspond to the class of a fibration. Let us prove that in this case (b) holds. Let $\Gamma\subset G$ be any finitely generated subgroup. By Theorem \ref{cantat}, $\Gamma$ is either bounded or it preserves a rational fibration. In the first case we are done. In the second, it follows that $\Gamma$ fixes a point $q\in\partial\h$ that corresponds to the class of the rational fibration that is preserved by $\Gamma$. Hence $q\neq p$ and $G$ therefore fixes the geodesic line through $p$ and $q$. In particular, $G$ fixes a point in $\h$ and is therefore bounded, by Lemma~\ref{fixelliptic}.
\end{proof}

The Burnside problem asks whether a finitely generated torsion group is finite. In general it has a negative answer. However, there are some important classes of groups in which the Burnside property holds, including the Cremona group:

\begin{theorem}[Theorem 7.7, \cite{MR2811600}]\label{burnside}
	Every finitely generated torsion subgroup of $\Cr_2(\C)$ is finite.
\end{theorem}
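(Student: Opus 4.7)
The plan is to combine the dichotomy already proved for finitely generated groups of elliptic elements (Theorem 3.1) with the classical theorem of Schur stating that every finitely generated torsion subgroup of $\GL_N$ over a field of characteristic zero is finite. The preliminary step is to check that torsion elements of $\Cr_2(\C)$ are elliptic: if $f^n = \id$, then the sequence $\{\deg(f^k)\}_{k\ge 0}$ is periodic and hence bounded, so by Theorem 2.2 the element $f$ is elliptic. Consequently, any finitely generated torsion subgroup $\Gamma \subset \Cr_2(\C)$ is a finitely generated group of elliptic elements, and Theorem 3.1 gives two cases to treat: either $\Gamma$ is bounded, or $\Gamma$ is conjugate to a subgroup of $\J \simeq \PGL_2(\C) \ltimes \PGL_2(\C(x))$.

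In the bounded case, the Zariski closure of $\Gamma$ in $\Cr_2(\C)$ is an algebraic subgroup by the results recalled in Section 2.7, and every algebraic subgroup of $\Cr_2(\C)$ is linear; consequently $\Gamma$ embeds into some $\GL_N(\C)$. Since $\Gamma$ is finitely generated and torsion, Schur's theorem applies and forces $\Gamma$ to be finite.

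In the fibration-preserving case I would use the natural projection $p\colon \J \to \PGL_2(\C)$ onto the first factor. The image $p(\Gamma)$ is a finitely generated torsion subgroup of $\PGL_2(\C)$, so Schur's theorem applied to this linear group over $\C$ gives that $p(\Gamma)$ is finite. Hence the kernel $K \coloneqq \Gamma \cap \PGL_2(\C(x))$ has finite index in $\Gamma$, and is itself finitely generated (subgroups of finite index in finitely generated groups are finitely generated). Now $K$ is a finitely generated torsion subgroup of the linear group $\PGL_2(\C(x))$ over the characteristic zero field $\C(x)$; another application of Schur yields that $K$ is finite, and therefore $\Gamma$ is finite.

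The proof is essentially a clean case analysis glued together by Schur's theorem; no serious obstacle arises, because all the structural ingredients—Theorem 3.1 for the dichotomy, the linearity of algebraic subgroups of $\Cr_n(\C)$ in the bounded case, and the semidirect product description of $\J$ in the fibration case—are already available. The only point to watch is that in the bounded case one truly needs the linearity of algebraic subgroups (not merely their regularizability as automorphism groups of surfaces) in order to feed the problem into Schur.
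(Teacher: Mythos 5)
Your argument is correct, and there is in fact nothing in the paper to compare it against: Theorem \ref{burnside} is quoted without proof from Cantat \cite{MR2811600}, so the paper supplies no internal argument for it. Your derivation is a clean reconstruction from the ingredients the paper does quote, and it mirrors the standard (indeed Cantat's) proof. The steps all check out: a torsion element has a periodic, hence bounded, degree sequence, so it falls into case (1) of Theorem \ref{dim2} and is elliptic; Theorem \ref{cantat} then gives the dichotomy; in the bounded case the Zariski closure is an algebraic, hence linear, subgroup and Schur's theorem on finitely generated periodic linear groups applies; in the fibration case the projection $\J\simeq\PGL_2(\C)\ltimes\PGL_2(\C(x))\to\PGL_2(\C)$ has finite image on $\Gamma$ by Schur, the kernel $K=\Gamma\cap\PGL_2(\C(x))$ is of finite index and hence finitely generated, and a second application of Schur over the characteristic-zero field $\C(x)$ (legitimate, since a finitely generated subgroup of $\PGL_2(\C(x))$ lies in $\PGL_2(K)$ for a finitely generated field $K\hookrightarrow\C$) finishes. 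The two points you flag are exactly the ones that need care --- using linearity rather than mere regularizability of the closure, and applying Schur over $\C(x)$ rather than $\C$ --- and both are handled. There is also no circularity in invoking Theorem \ref{cantat}, since in \cite{MR2811600} that result (Proposition 6.14 there) precedes and does not depend on the Burnside statement (Theorem 7.7 there).
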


Theorem \ref{cantat} and \ref{burnside} are crucial for the proof of Theorem \ref{torsionthm}.

\section{Maximal algebraic subgroups}\label{maxalgsubgroups}
In this section we recall some results about algebraic subgroups of $\Cr_2(\C)$. In \cite{MR2504924}, Blanc classified all maximal algebraic subgroups of $\Cr_2(\C)$ (see \cite{MR683251}, \cite{enriques1893sui} for the case of maximal connected algebraic subgroups). There are 11 classes of maximal algebraic subgroups. We summarize them in Theorem~\ref{maxsubgroups}. The notions appearing in Theorem \ref{maxsubgroups} will be recalled in the next sections. 

\begin{theorem}[\cite{MR2504924}]\label{maxsubgroups}
	Every algebraic subgroup of $\Cr_2(\C)$ is contained in a maximal algebraic subgroup. The maximal algebraic subgroups of $\Cr_2(\C)$ are conjugate to one of the following groups:
	\begin{enumerate}
		\item $\aut(\p^2)\simeq \PGL_3(\C)$
		\item $\aut(\p^1\times\p^1)\simeq (\PGL_2(\C))^2\rtimes \Z/2\Z$
		\item $\aut(S_6)\simeq(\C^*)^2\rtimes (\s_3\times \Z/2\Z)$, where $S_6$ is the del Pezzo surface of degree~6.
		\item $\aut(\F_n)\simeq\C[x,y]_n\rtimes\GL_2(\C)/\mu_n$, where $n\geq 2$ and $\F_n$ is the $n$-th Hirzebruch surface and $\mu_n\subset\GL_2(\C)$ is the subgroup of $n$-torsion elements in the center of $\GL_2(\C)$. 
		\item $\aut(S,\pi)$, where $\pi\colon S\to\p^1$ is an exceptional conic bundle.
		\item[(6)-(10)] $\aut(S)$, where $S$ is a del Pezzo surface of degree $5$, $4$, $3$, $2$ or $1$. In this case, $\aut(S)$ is finite.
		\item[(11)] $\aut(S,\pi)$, where $(S,\pi)$ is a $(\Z/2\Z)^2$-conic bundle and $S$ is not a del Pezzo surface. There exists an exact sequence
		\[
		1\to V\to\aut(S,\pi)\to H_V\to 1,
		\]
		where $V\simeq (\Z/2\Z)^2$ and $H_V\subset\PGL_2(\C)$ is a finite subgroup.
	\end{enumerate}
\end{theorem}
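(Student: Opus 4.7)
The plan is to first reduce the statement to a question about automorphism groups of smooth projective rational surfaces, then to run a $G$-equivariant minimal model program, and finally to identify the resulting list and check maximality case by case.

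First I would invoke Weil's regularization theorem together with the fact, recalled in Section~\ref{zariskitop}, that algebraic subgroups of $\Cr_2(\C)$ are of bounded degree: any algebraic subgroup $G\subset\Cr_2(\C)$ is conjugate to a subgroup of $\aut(X)$ for some smooth projective rational surface~$X$. Thus one replaces the study of algebraic subgroups of $\Cr_2(\C)$ with the study of pairs $(X,G)$ where $G\subset\aut(X)$ acts regularly. Since a $G$-equivariant blow-up $X'\to X$ gives an inclusion $\aut(X',G)\hookrightarrow \aut(X,G)$, to get a maximal algebraic subgroup one wants $(X,G)$ to be $G$-\emph{minimal}, i.e.\ to admit no $G$-equivariant contraction.

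Next I would apply the $G$-equivariant MMP for rational surfaces (originally due to Manin and Iskovskikh), contracting $G$-stable collections of disjoint $(-1)$-curves until a $G$-Mori fibre space $(X_0,G)$ is reached. The output is one of: (a)~$X_0$ of Picard rank one with $-K_{X_0}$ ample, i.e.\ a del Pezzo surface, giving the classes $\aut(\p^2)$ and $\aut(S_d)$ for $1\le d\le 6$ together with $\aut(\p^1\times\p^1)=\aut(\F_0)$; (b)~$X_0$ carrying a $G$-equivariant conic bundle $\pi\colon X_0\to \p^1$, which after analysing the action on the base and on the reducible fibres splits into the Hirzebruch cases $\aut(\F_n)$, $n\ge 2$, the exceptional conic bundles of class~(5), and the $(\Z/2\Z)^2$-conic bundles of class~(11). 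For each of these one computes $\aut(X_0)$ directly: for $\p^2$ one gets $\PGL_3(\C)$; for $\F_n$ the semidirect product $\C[x,y]_n\rtimes\GL_2(\C)/\mu_n$ acting by $(x,y)\mapsto (\text{M\"obius},\text{affine in }y)$; for $S_6$, the torus of diagonal automorphisms extended by the symmetries of its hexagonal configuration of $(-1)$-curves; and for del Pezzo surfaces of degree $\le 5$, a finite group, as is classical.

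The main obstacle — and the technical heart of Blanc's paper \cite{MR2504924} — is proving \emph{maximality}: one must show that the groups listed are not contained in any strictly larger algebraic subgroup of $\Cr_2(\C)$, and in particular that no two of them are conjugate. For this I would use the Sarkisov program: any birational map between Mori fibre spaces decomposes into elementary links (types I–IV), and any element of $\Cr_2(\C)$ normalising one of the listed $\aut(X_0)$ must come from a $G$-equivariant Sarkisov link. One enumerates the elementary links and checks, for each pair $(X_0,G)$ on the list, that the only $G$-equivariant links are isomorphisms, except in the expected redundancies (for instance $\F_1$ does not appear because the contraction of its $(-1)$-section is a $G$-equivariant link to $\p^2$ for every $G\subset\aut(\F_1)$). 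The most delicate case is the comparison between conic bundles and del Pezzo structures, because a conic bundle of small rank can sometimes be flipped via a link of type II to a del Pezzo structure; identifying precisely those conic bundles that do \emph{not} admit such a flip yields the exceptional and $(\Z/2\Z)^2$-conic bundle classes. Finally, the finiteness statements in cases (6)–(11) follow from the rigidity of del Pezzo surfaces of degree $\le 5$ and from the analysis of the representation of $\aut(S,\pi)$ on the fibres of the conic bundle.
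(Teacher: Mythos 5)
This theorem is not proved in the paper at all: it is quoted verbatim from Blanc's classification \cite{MR2504924}, so there is no in-paper argument to compare yours against. What you have written is a faithful high-level description of the architecture of Blanc's actual proof: Weil regularization to pass from an algebraic subgroup of $\Cr_2(\C)$ to a pair $(X,G)$ with $G\subset\aut(X)$, reduction to $G$-minimal pairs via the equivariant MMP (del Pezzo surfaces with $G$-invariant Picard rank one, or $G$-equivariant conic bundles), explicit computation of the automorphism groups of the resulting models, and then the maximality analysis via equivariant elementary links in the spirit of Iskovskikh's Sarkisov program. You also correctly identify where the difficulty sits (distinguishing conic bundles that admit a link to a del Pezzo structure from those that do not, which is what isolates the exceptional and $(\Z/2\Z)^2$-conic bundles), and why $\F_1$ drops off the list.

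That said, as a proof this is a roadmap rather than an argument: essentially every step that carries mathematical content is deferred to ``one enumerates'' or ``one checks''. The enumeration of all equivariant links between the candidate pairs, the verification that no two listed groups are conjugate, the precise characterization of which conic bundles are rigid, and the determination of the group structures in cases (3)--(5) and (11) constitute the bulk of \cite{MR2504924} and are not reproduced or even reduced to checkable claims here. Two smaller points: in your case (a) the relevant condition is $G$-\emph{invariant} Picard rank one, not Picard rank one ($\p^1\times\p^1$ has $\rho=2$ but $\rho^{\aut}=1$ because the factor swap identifies the two rulings); and the first assertion of the theorem, that \emph{every} algebraic subgroup is contained in a maximal one, is not automatic from Zorn-type considerations but follows only once the full list is established, so it cannot be stated before the classification is complete. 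None of this is a wrong turn; it is simply that the proposal restates the strategy of the cited source without supplying the case analysis that makes it a proof.
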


In \cite{MR2504924} one finds a more detailed description of the groups above as well as a classification of the conjugacy classes of the maximal algebraic subgroups.

\subsection{Automorphism groups of del Pezzo surfaces}\label{delpezzo}
Recall that a {\it del Pezzo surface} is a smooth projective surface whose anticanonical divisor class is ample. The degree of a del Pezzo surface $S$ is the self-intersection number of its canonical class. It is well-known that the degree of a del Pezzo surface is a positive integer $\leq 9$. A del Pezzo surface is isomorphic to either $\p^2, \p^1\times\p^1$ or to the blow-up $S$ of $r$ general points in $\p^2$. Here, general means that $S$ does not contain a curve of self-intersection $\leq -2$. In this last case, the degree of $S$ is exactly $9-r$. There exists a unique isomorphism class of del Pezzo surfaces of degree $5,6,7$ and $9$, two isomorphism classes of del Pezzo surfaces of degree $8$ and infinitely many isomorphism classes of del Pezzo surfaces of degree $1,2,3$ or $4$ (see for example \cite[Chapter 8]{MR2964027} for proofs and references for these and other results on del Pezzo surfaces). Automorphism groups of del Pezzo surfaces are always algebraic subgroups of $\Cr_2(\C)$ (\cite[Proposition 2.2.6]{MR2504924}) and they are finite if and only if the degree of the corresponding surface is $\leq 5$.

If the degree of a del Pezzo surface $S$ is $5$, then $\aut(S)=\s_5$. A precise descripition of automorphism groups of del Pezzo surfaces of degree $\leq 4$ can be found in the tables in \cite[Section 10]{MR2641179}. This description yields in particular the following:

\begin{theorem}[\cite{MR2641179}]\label{finitesubgroupsorder}
	If the automorphism group of a del Pezzo surface is finite, then it has order at most $648$.
\end{theorem}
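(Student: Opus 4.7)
The plan is to reduce the theorem to an inspection of the classification tables in \cite{MR2641179}. Since the paper already notes that an automorphism group of a del Pezzo surface is finite exactly when the degree is $\leq 5$, and that for a del Pezzo surface $S$ of degree $5$ one has $\aut(S)\simeq\s_5$ (of order $120$), the content of the theorem lies in bounding $|\aut(S)|$ uniformly when $\deg(S)\in\{1,2,3,4\}$.

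I would proceed degree by degree, in each case using the standard fact that $\aut(S)$ embeds into the Weyl group of the root system associated to $S$, namely $W(D_5)$ for degree $4$, $W(E_6)$ for degree $3$, $W(E_7)$ for degree $2$, and $W(E_8)$ for degree $1$. This already gives the a priori bounds $|W(D_5)|=1920$, $|W(E_6)|=51840$, $|W(E_7)|=2903040$ and $|W(E_8)|=696729600$, which are of course far too large. One then refines by consulting the explicit lists in \cite[\S 10]{MR2641179}: for each degree, there is a finite list of possible isomorphism types of $\aut(S)$, together with a distinguished ``maximal'' family attained on a surface with extra symmetry. For degree $4$ the maximum is realized by an intersection of two diagonal quadrics with automorphism group of order $160$; for degree $3$ it is realized by the Fermat cubic surface, whose automorphism group is $(\Z/3\Z)^3\rtimes\s_4$ of order~$648$; for degree $2$ the maximum is $\Z/2\Z\times\PSL_2(\F_7)$ of order $336$; and for degree $1$ the maximum is realized by a distinguished surface with automorphism group of order~$144$.

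Comparing these numbers, the maximum over all degrees $\leq 5$ is $648$, attained uniquely (up to isomorphism of surfaces) by the Fermat cubic. Hence $|\aut(S)|\leq 648$ whenever $\aut(S)$ is finite.

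The only genuinely non-trivial step is the input from \cite{MR2641179}: the classification of conjugacy classes of finite subgroups of $\Cr_2(\C)$ arising as automorphism groups of del Pezzo surfaces, which in turn rests on a careful case analysis of the action on the root lattice $K_S^\perp\subset\Pic(S)$. In our presentation this classification is simply invoked as a black box, so the argument reduces to extracting the largest order from the tables; no further computation is needed.
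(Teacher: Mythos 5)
Your proposal matches the paper exactly in spirit: the paper offers no proof of this statement but simply cites the tables in \cite[Section 10]{MR2641179}, and your argument is precisely the intended table lookup, correctly identifying the Fermat cubic with $\aut\simeq(\Z/3\Z)^3\rtimes\s_3\cdot\Z/2$ of order $648$ as the unique maximizer and checking that the other degrees give smaller bounds ($120$, $160$, $336$, $144$). Nothing further is needed.
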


\begin{lemma}\label{faithfuldelpezzo}
	If the automorphism group of a del Pezzo surface is finite, then it can be embedded into $\GL_8(\C)$.
\end{lemma}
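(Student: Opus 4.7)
The plan is to exhibit a faithful representation of $\aut(S)$ on the Picard lattice $\Pic(S)$ and then to use the fact that $\aut(S)$ preserves the canonical class $K_S$ to cut the rank down to at most $8$.

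First, I would recall that a del Pezzo surface with finite automorphism group has degree $d\leq 5$, and that by the classification such a surface is isomorphic to the blow-up of $n\coloneqq 9-d\geq 4$ points in general position in $\p^2$. Hence $\Pic(S)$ is free abelian of rank $n+1=10-d$. Since every automorphism of $S$ preserves $K_S$, the group $\aut(S)$ acts on the orthogonal complement $K_S^\perp\subset \Pic(S)$, which is free abelian of rank $n=9-d\leq 8$. This yields a homomorphism
\[
\rho\colon \aut(S)\to \GL(K_S^\perp)\cong \GL_{n}(\Z)\hookrightarrow \GL_8(\Z)\hookrightarrow \GL_8(\C),
\]
where the middle embedding is block-diagonal with the identity on the remaining $8-n$ coordinates.

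The remaining point, and the main obstacle, is to verify that $\rho$ is injective. Given $g\in\ker(\rho)$, the element $g$ fixes the class of every $(-1)$-curve, and therefore preserves each $(-1)$-curve as a set. Realising $S$ as the blow-up of general points $p_1,\dots,p_n\in\p^2$, the exceptional divisors $E_1,\dots,E_n$ and the strict transforms $L_{ij}$ of the lines through $p_i$ and $p_j$ are all $(-1)$-curves, so $g$ preserves each $E_i$ individually. Contracting the $E_i$ successively (which we can do $g$-equivariantly since each is preserved), $g$ descends to an automorphism $\bar g\in\aut(\p^2)$ fixing each $p_i$ and fixing each line $\overline{p_ip_j}$ as a set. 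Since $n\geq 4$ and the $p_i$ are in general position, any element of $\PGL_3(\C)$ fixing four of them is the identity, so $\bar g=\id$ and hence $g=\id$.

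Combining the two steps yields an injective group homomorphism $\aut(S)\hookrightarrow \GL_8(\C)$ as required. I expect the faithfulness of $\rho$ to be the delicate point, but as outlined it reduces cleanly to the elementary fact that $\PGL_3(\C)$ acts freely on ordered $4$-tuples of points in general position in $\p^2$, and this is uniform across all degrees $1\leq d\leq 5$.
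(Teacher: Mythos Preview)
Your argument is correct and follows essentially the same route as the paper: act on the Picard lattice, use that $(-1)$-classes have unique effective representatives to descend to $\PGL_3(\C)$ fixing $\geq 4$ general points, and pass to $K_S^\perp$ to land in $\GL_8$. The only cosmetic difference is the order of the last two steps: the paper first proves faithfulness on the full $\NS(S)\otimes\R$ and then projects to $K_S^\perp$, whereas you restrict to $K_S^\perp$ first; since $(-1)$-curve classes do not lie in $K_S^\perp$, your sentence ``$g$ fixes the class of every $(-1)$-curve'' tacitly uses that $g$ also fixes $K_S$ and that $K_S^2\neq 0$ gives $\Pic(S)\otimes\Q=\Q K_S\oplus K_S^\perp\otimes\Q$, so triviality on $K_S^\perp$ forces triviality on all of $\Pic(S)$.
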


\begin{proof}
	Let $S$ be a del Pezzo surface such that $\aut(S)$ is finite. This implies that $S$ is of degree $d\leq 5$, hence $S$ is isomorphic to the blow-up of $r=9-d$ general points $p_1,\dots, p_r$  in $\p^2$, where $4\leq r\leq 8$. The N\'eron-Severi space $\NS(S)\otimes\R$ is therefore of dimension $r+1$ and has a basis $[E_0], [E_{p_1}], \dots, [E_{p_r}]$, where $[E_0]$ is the pullback of the class of a line and the $[E_{p_i}]$ are the classes of the exceptional lines $E_{p_i}$ corresponding to the points $p_i$. Since the self-intersection of a class $[E_{p_i}]$ equals $-1$, the exceptional line $E_{p_i}$ is the only representative of $[E_{p_i}]$ on $S$. If $f\in\aut(S)$ acts as the identity on $\NS(S)\otimes\R$, we obtain that $f$ preserves the exceptional lines $E_{p_i}$ for all $i$. Therefore, $f$ induces an automorphism on $\p^2$ that fixes the points $p_i$. But since the $p_i$ are in general position and $r\geq 4$, the automorphism $f$ is the identity. Thus, the action of $\aut(S)$ on $\NS(S)\otimes\R$ is faithful and we obtain a faithful representation $\aut(S)\to\GL_{r+1}(\C)$. As every element $f\in\aut(S)$ fixes the canonical divisor $K_S$, the one-dimensional subspace $\R\cdot K_S$ in $\NS(S)\otimes\R$ is fixed. We project to the orthogonal complement of $K_S$ in $\NS(S)\otimes\R$ and obtain a faithful representation of $\aut(S)$ into $\GL_r(\C)$.
\end{proof}

A del Pezzo surface of degree $6$ is isomorphic to the surface
\[
S_6=\{((x:y:z),(a:b:c))\in\p^2\times\p^2\mid ax=by=cz\},
\]
which is the blow-up of $\p^2$ in three general points.
The group $\aut(S_6)$ is isomorphic to $(\C^*)^2\rtimes (\s_3\times\Z/2\Z)$, where $\s_3$ acts by permuting the coordinates of the two factors simultanously, $\Z/2\Z$ exchanges the two factors and $d\in(\C^*)^2$ acts by sending $((x:y:z),(a:b:c))$ to $(d(x:y:z),d^{-1}(a:b:c))$ ($d(x:y:z)$ is the standard action on $\p^2$). In other words, $\aut(S_6)$ is conjugate to the subgroup $(\s_3\times\Z/2\Z)\ltimes D_2\subset\GL_2(Z)\ltimes D_2$. 

\begin{lemma}\label{delpezzo6}
	The group $\aut(S_6)$ can be embedded into $\GL_6(\C)$.
\end{lemma}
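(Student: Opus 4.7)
The plan is to use the embedding $S_6\subset\p^2\times\p^2$ recalled just above to build an explicit faithful $6$-dimensional linear representation of $\aut(S_6)$. The first step is to fix compatible linear lifts of the three pieces of $\aut(S_6)=(\C^*)^2\rtimes(\s_3\times\Z/2\Z)$. I would lift $D_2\subset\PGL_3(\C)$ to the determinant-one diagonal torus
\[
T=\{\mathrm{diag}(d_1,d_2,d_3)\in\GL_3(\C)\mid d_1d_2d_3=1\}\cong(\C^*)^2.
\]
This is the essential choice: this particular lift of $D_2$ is preserved both by conjugation by the permutation matrices $P_\sigma\in\GL_3(\C)$ for $\sigma\in\s_3$ and by the inversion map $t\mapsto t^{-1}$, which is exactly what is needed to match the description of the $\s_3\times\Z/2\Z$-action on $D_2$ given above.

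Now I define $\rho\colon\aut(S_6)\to\GL(\C^3\oplus\C^3)=\GL_6(\C)$ on generators by
\[
\rho(t)(v_1,v_2)=(tv_1,\,t^{-1}v_2),\qquad \rho(\sigma)(v_1,v_2)=(P_\sigma v_1,\,P_\sigma v_2),\qquad \rho(\tau)(v_1,v_2)=(v_2,v_1),
\]
for $t\in T$, $\sigma\in\s_3$, and $\tau$ the non-trivial element of $\Z/2\Z$. A direct matrix computation yields
\[
\rho(\tau)\rho(t)\rho(\tau)^{-1}=\rho(t^{-1}),\qquad \rho(\sigma)\rho(t)\rho(\sigma)^{-1}=\rho(P_\sigma tP_\sigma^{-1}),
\]
which are precisely the relations defining the action of $\s_3\times\Z/2\Z$ on $D_2$; moreover $\rho(\sigma)$ and $\rho(\tau)$ commute because the two blocks of $\rho(\sigma)$ are equal. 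Hence $\rho$ extends to a well-defined group homomorphism.

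Finally I would check faithfulness. If $\rho(d,\sigma,\tau)=\id$, then the off-diagonal swap part of $\rho(\tau)$ forces $\tau=\id$, after which $\rho(d,\sigma,\tau)$ is block diagonal with blocks $tP_\sigma$ and $t^{-1}P_\sigma$; the equation $tP_\sigma=I_3$ forces the permutation matrix $P_\sigma$ to be diagonal, so $\sigma=\id$ and $t=I_3$, and hence $d$ is trivial in $D_2$. I do not anticipate any serious obstacle: the whole argument hinges on choosing the determinant-one lift $T$ of $D_2$ so that both the $\s_3$-action by permutation and the $\Z/2\Z$-action by inversion preserve $T$ and extend to a single linear representation of the semidirect product; once that is set up, everything else is a routine verification.
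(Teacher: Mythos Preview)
Your argument is correct. The choice of the determinant-one lift $T\subset\GL_3(\C)$ is exactly what makes both the permutation action of $\s_3$ and the inversion action of $\Z/2\Z$ lift compatibly, and once this is in place the verification of the semidirect product relations and of faithfulness goes through as you describe.

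Your route differs from the paper's. The paper does not build the representation abstractly from the semidirect product structure; instead it uses the birational model of $\aut(S_6)$ inside $\Cr_2(\C)$ and the rational map
\[
[x:y:z]\dashmapsto [x^2y:x^2z:y^2x:y^2z:z^2x:z^2y:xyz]
\]
from $\p^2$ to $\p^6$. The birational action of $(\s_3\times\Z/2\Z)\ltimes D_2$ on $\p^2$ becomes a linear action on $\p^6$ through this map, and since the last coordinate $xyz$ is invariant, the action restricts to the affine chart $\{x_6\neq 0\}\cong\A^6$, yielding the embedding into $\GL_6(\C)$. The two proofs are closely related in spirit (your $(t,t^{-1})$-action on $\C^3\oplus\C^3$ mirrors the fact that the six monomials above split into two triples exchanged by the Cremona involution), but yours is purely group-theoretic and avoids any geometric input, while the paper's argument is tied to the explicit realization of $S_6$ and its automorphisms as Cremona transformations.
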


\begin{proof}
	Consider the rational map $f\colon\p^2\dashrightarrow \p^6$ given by 
	\[
	[x:y:z]\dashmapsto [x^2y:x^2z:y^2x:y^2z:z^2x:z^2y:xyz].
	\]
	Then, the rational action of $(\s_3\times\Z/2\Z)\ltimes D_2$ on $f(\p^2)$ extends to a regular action on $\p^6$ that preserves the affine space given by $x_6\neq 0$. This yields an embedding of $(\s_3\times\Z/2\Z)\ltimes D_2$ into $\GL_6(\C)$.
	
\end{proof}

\begin{lemma}\label{delpezzoembedd}
	Let $G\subset\Cr_2(\C)$ be a subgroup that is conjugate to an automorphism group of a del Pezzo surface. Then $G$ can be embedded into $\GL_8(\C)$.
\end{lemma}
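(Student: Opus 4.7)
The plan is a case analysis on the degree $d \in \{1, \dots, 9\}$ of a del Pezzo surface $S$ for which $G$ is conjugate in $\Cr_2(\C)$ to $\aut(S)$. For $d \leq 5$ the group $\aut(S)$ is finite, so Lemma \ref{faithfuldelpezzo} already embeds it into $\GL_{9-d}(\C) \subset \GL_8(\C)$; and for $d = 6$ we have $S \cong S_6$, so Lemma \ref{delpezzo6} embeds $\aut(S_6)$ into $\GL_6(\C) \subset \GL_8(\C)$. The only remaining cases are $d \in \{7, 8, 9\}$, where $\aut(S)$ is a positive-dimensional algebraic group.

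For these cases the central ingredient is the adjoint representation $\mathrm{Ad} \colon \PGL_3(\C) \to \GL(\mathfrak{sl}_3(\C)) \cong \GL_8(\C)$, which is faithful because $\PGL_3(\C)$ has trivial center in characteristic zero. If $d = 9$, then $S = \p^2$ and $\aut(S) = \PGL_3(\C)$ embeds through $\mathrm{Ad}$. If $d = 7$, then $S$ is the blow-up of $\p^2$ at two distinct general points $p, q$, and any automorphism of $S$ permutes the two $(-1)$-curves lying over $p$ and $q$, hence descends via the blow-down map to an automorphism of $\p^2$ stabilizing the set $\{p, q\}$; this identifies $\aut(S)$ with a subgroup of $\PGL_3(\C)$, which embeds into $\GL_8(\C)$ via $\mathrm{Ad}$. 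If $d = 8$ and $S = \F_1$, the same reasoning with a single blown-up point identifies $\aut(\F_1)$ with a subgroup of $\PGL_3(\C)$ and again yields an embedding into $\GL_8(\C)$.

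The last subcase is $d = 8$ with $S = \p^1 \times \p^1$, where $\aut(S) \cong (\PGL_2(\C))^2 \rtimes \Z/2\Z$. I would embed each copy of $\PGL_2(\C)$ faithfully into $\GL(\mathfrak{sl}_2(\C)) \cong \GL_3(\C)$ by the adjoint representation, place the two copies in block-diagonal position in $\GL_6(\C)$, and realize the swap involution by the $6 \times 6$ block-permutation matrix $\sigma$ exchanging the two $\C^3$ summands. The identity $\sigma \cdot \mathrm{diag}(A, B) \cdot \sigma^{-1} = \mathrm{diag}(B, A)$ confirms compatibility with the semidirect-product structure and gives a faithful embedding into $\GL_6(\C) \subset \GL_8(\C)$.

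The argument is essentially an exercise in collecting the correct linear representations; the only mildly delicate point is verifying that in the $d=7$ and $d=8$, $S=\F_1$ cases the geometric automorphism group really does coincide with the expected subgroup of $\PGL_3(\C)$, which follows from the uniqueness of the $(-1)$-curves on these surfaces and the fact that any automorphism must permute them and hence descend through the blow-down.
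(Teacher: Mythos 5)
Your proposal is correct and follows essentially the same route as the paper: a case analysis by degree, invoking Lemma \ref{faithfuldelpezzo} for degree $\leq 5$, Lemma \ref{delpezzo6} for degree $6$, and the adjoint representation of $\PGL_3(\C)$ (resp.\ the block-diagonal adjoint of $(\PGL_2(\C))^2\rtimes\Z/2\Z$) for the positive-dimensional cases. The only cosmetic difference is in degree $7$, where you descend to $\PGL_3(\C)$ by contracting the two exceptional curves (which works, since the strict transform of the line through the two points is the unique $(-1)$-curve meeting two others, so the pair of exceptional curves is automorphism-invariant), whereas the paper instead observes that $\aut(S)$ is conjugate to a subgroup of $\aut(\p^1\times\p^1)$.
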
 

\begin{proof}
	We prove that $\aut(S)$ can be embedded into $\GL_8(\C)$ for all del Pezzo surfaces $S$. 
	If $S$ is a del Pezzo surface of degree $9$ then $S$ is isomorphic to $\p^2$, so $\aut(S)=\PGL_3(\C)\subset\GL_8(\C)$. This corresponds to case $(1)$ of Theorem \ref{maxsubgroups}. If the degree of $S$ is $8$ then $S$ is either isomorphic to $\F_0=\p^1\times\p^1$ or to $\F_1$. The automorphism group $\aut(\F_1)$ is not a maximal algebraic subgroup of $\Cr_2(\C)$ and $\aut(\F_0)=(\PGL_2(\C))^2\rtimes \Z/2\Z\subset \GL_6(\C)$. In the case that $S$ is a del Pezzo surface of degree $7$, $\aut(S)$ is conjugate to a subgroup of $\aut(\p^1\times\p^1)$. A del Pezzo surface of degree $6$ can be embedded into $\GL_6(\C)$, by Lemma \ref{delpezzo6}.
	If the degree of a del Pezzo surface $S$ is $\leq 5$ then $\aut(S)$ is finite and the claim follows from Lemma~\ref{faithfuldelpezzo}.
\end{proof}

\subsection{Automorphism groups of rational fibrations}\label{fibrations}
In this section we consider the cases (4), (5) and (11) of Theorem \ref{maxsubgroups}. 

\subsubsection{Automorphism groups of Hirzebruch surfaces}
In case (4) of Theorem \ref{maxsubgroups}, the semidirect product gives a natural homomorphism $\varphi\colon\aut(\F_n)\to\GL_2(\C)/\mu_n$ whose kernel is isomorphic to $\C^n$. The following lemma shows that all finite subgroups of $\aut(\F_n)$ can be embedded into $\PGL_2(\C)\times\PGL_2(\C)$ for all $n\geq 2$:

\begin{lemma}\label{GLmodmu}
	If $n\geq 2$ is even, the group $\GL_2(\C)/\mu_n$ is isomorphic as an algebraic group to $\PGL_2(\C)\times\C^*$. If $n$ is odd, then $\GL_2(\C)/\mu_n$ is  isomorphic as an algebraic group to $\GL_2(\C)$.
\end{lemma}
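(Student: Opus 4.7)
The plan is to exhibit, in each parity case, an explicit surjective morphism of algebraic groups from $\GL_2(\C)$ to the target whose kernel is exactly $\mu_n$, and then invoke the characteristic-zero fact that a bijective morphism of algebraic groups is an isomorphism. Recall that $\mu_n$ consists of the scalar matrices $\lambda I$ with $\lambda^n = 1$.

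For $n$ odd, I would define
\[
\phi\colon \GL_2(\C) \longrightarrow \GL_2(\C), \qquad \phi(A) = \det(A)^{(n-1)/2}\, A.
\]
Since $(n-1)/2 \in \Z$ and scalars are central, a one-line computation gives $\phi(AB) = \phi(A)\phi(B)$, so $\phi$ is a morphism of algebraic groups. For a scalar, $\phi(\lambda I) = \lambda^{n-1}\cdot \lambda I = \lambda^n I$, so $\ker \phi$ is exactly $\mu_n$. Surjectivity is immediate: given $C \in \GL_2(\C)$, taking determinants in $\phi(A) = C$ forces $\det(A)^n = \det(C)$, so for any $n$-th root $\delta$ of $\det(C)$ the matrix $A = \delta^{-(n-1)/2} C$ satisfies $\phi(A) = C$.

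For $n$ even, write $n = 2m$ and define
\[
\psi\colon \GL_2(\C) \longrightarrow \PGL_2(\C)\times \C^*, \qquad \psi(A) = \bigl([A],\, \det(A)^m\bigr),
\]
where $[A]$ is the class of $A$ in $\PGL_2(\C)$. This is visibly a morphism of algebraic groups, and its kernel consists of scalar matrices $\lambda I$ with $\lambda^{2m} = \lambda^n = 1$, i.e.\ precisely $\mu_n$. For surjectivity, given $(B,c) \in \PGL_2(\C)\times\C^*$, choose any lift $\tilde B \in \GL_2(\C)$ of $B$ and scale it by $\lambda \in \C^*$; then $\psi(\lambda \tilde B) = (B, \lambda^n \det(\tilde B)^m)$, so picking $\lambda$ with $\lambda^n = c/\det(\tilde B)^m$ hits $(B,c)$.

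In each case $\phi$ (respectively $\psi$) factors through an injective morphism of algebraic groups $\GL_2(\C)/\mu_n \to \GL_2(\C)$ (respectively to $\PGL_2(\C)\times\C^*$) which is also surjective, hence bijective; since we work in characteristic zero, such a morphism is automatically an isomorphism of algebraic groups. The main thing to be careful about is that the two constructions are genuinely algebraic (not just set-theoretic) and that the resulting quotient map is really an isomorphism of algebraic groups rather than just of abstract groups; both issues are handled uniformly by the characteristic-zero fact invoked at the end. There is no serious obstacle; the content is simply to notice that the parity of $n$ dictates whether a uniform square root of $\det$ is available on the scalars, and this exactly distinguishes the two cases.
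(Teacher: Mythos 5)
Your proof is correct and follows essentially the same route as the paper: for odd $n$ the map $A\mapsto\det(A)^{(n-1)/2}A$ and for even $n=2m$ the map $A\mapsto([A],\det(A)^m)$ are exactly the homomorphisms used there, with the same kernel and surjectivity computations. The only (welcome) addition is your explicit remark that a bijective morphism of algebraic groups in characteristic zero is an isomorphism, which the paper leaves implicit.
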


\begin{proof}
	Assume that $n\geq 2$ is even, so $n=2k$ for some $k\in\Z_+$. Consider the algebraic group homomorphism $\varphi_e\colon \GL_2(\C)\to\PGL_2(\C)\times\C^*$ given by $A\mapsto ([A], \det(A)^k)$, where $[A]$ denotes the class of $A$ modulo the center of $\GL_2(\C)$. The kernel of $\varphi_e$ consists of the scalar matrices of the form $a\cdot\id$ such that $\det(a)^k=1$; hence, $\ker(\varphi_e)=\mu_n$. Let $(M,c)\in\PGL_2(\C)\times\C^*$. We choose a representative $A\in\SL_2(\C)\subset\GL_2(\C)$ of the class of $M$ and a $n$-th root of $c$ in $\C$, which we denote by $d$. Then $\varphi_e(d\cdot A)=(M,c)$. So $\varphi_e$ is surjective and the claim follows.  
	
	Assume that $n$ is odd; so $n=2k+1$ for  some $k\in\Z_+$. Consider the algebraic group homomorphism $\varphi_o\colon \GL_2(\C)\to\GL_2(\C)$ given by $A\mapsto\det(A)^kA$. Let $B\in\GL_2(\C)$ and $c\in\C$ a $n$-th root of $\det(B)^k$. Then 
	\[
	\varphi_o(c^{-1}\cdot B)=c^{-2k-1}\det(B)^k\cdot B=B,
	\]
	hence $\varphi_o$ is surjective. Moreover, $\ker(\varphi_o)=\mu_n$, and the claim follows.
\end{proof}

\subsubsection{Automorphism groups of exceptional fibrations}
In a next step, we consider the case $\aut(S,\pi)$, where $\pi\colon S\to\p^1$ is an exceptional fibration, i.e.\,case (5) of Theorem \ref{maxsubgroups}. An {\it exceptional fibration} $S$ is by definition a conic bundle with singular fibers above $2n$ points in $\p^1$ and with two sections $s_1$ and $s_2$ of self-intersection $-n$, where $n$ is an integer $\geq 2$ (see \cite{MR2504924}). 

%
%

The proof of the following lemma can be found in \cite[proof of Lemma~4.3.3]{MR2504924}. We briefly recall the arguments of the proof.

\begin{lemma}\label{semidirectexcept}
	Let $\pi\colon S\to \p^1$ be an exceptional fibration. Then $\aut(S,\pi)$ is isomorphic to a subgroup of $\PGL_2(\C)\times\PGL_2(\C)$.
\end{lemma}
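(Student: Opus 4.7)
The plan is to contract the $(-1)$-curves of the singular fibers that meet $s_2$, reducing $S$ to a Hirzebruch surface $\F_n$, and then use the structure of $\aut(\F_n)$ to construct the desired embedding. Since $s_1$ and $s_2$ are the only irreducible curves of self-intersection $-n$ on $S$, every $g\in\aut(S,\pi)$ preserves the set $\{s_1,s_2\}$, so denote by $H\subseteq\aut(S,\pi)$ the subgroup of index $\leq 2$ that preserves each section individually.

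Each of the $2n$ singular fibers consists of two $(-1)$-curves meeting at a point, one of which meets $s_1$ and the other $s_2$. Contracting the components meeting $s_2$ in each singular fiber yields an $H$-equivariant birational morphism $\eta\colon S\to\F_n$: in the image, $s_1$ becomes the unique $(-n)$-section and $s_2$ becomes an $n$-section passing through the $2n$ marked points arising from the contracted curves. The subgroup of $\aut(\F_n)$ fixing both $s_1$ and $s_2$ is isomorphic to $\C^*\times\PGL_2(\C)$, where $\C^*$ scales the fibers while fixing both sections and $\PGL_2(\C)$ acts on the base $\p^1$ (uniquely lifted once the two sections are fixed). Imposing further the preservation of the $2n$ marked points restricts the $\PGL_2(\C)$-factor to a finite subgroup $G$, so $H$ embeds into $\C^*\times G\subseteq \PGL_2(\C)\times\PGL_2(\C)$.

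For elements of $\aut(S,\pi)\setminus H$, which swap $s_1$ and $s_2$, the swap acts on each singular fiber by exchanging its two $(-1)$-components. On each smooth fiber, in coordinates placing $F\cap s_1$ at $0$ and $F\cap s_2$ at $\infty$, this induces the involution $t\mapsto t^{-1}$, enlarging the $\C^*$-factor to $\C^*\rtimes\Z/2\Z$, the normalizer of $\C^*$ in $\PGL_2(\C)$. Hence $\aut(S,\pi)$ embeds into $(\C^*\rtimes\Z/2\Z)\times G\subseteq \PGL_2(\C)\times\PGL_2(\C)$, and injectivity is immediate since an element trivial in both factors acts trivially on the base and trivially on every fiber relative to $s_1,s_2$, hence is the identity.

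The main obstacle is that the contraction $\eta$ is only $H$-equivariant, not equivariant for the full $\aut(S,\pi)$: an element swapping the sections induces a contraction to a different $\F_n$-model (obtained by contracting the components meeting $s_1$ instead). This is resolved by working directly on the $\C^*$-bundle $S\setminus (s_1\cup s_2\cup \text{sing.\ fibers})$, on which the full $\aut(S,\pi)$ acts, and observing that the action on a fixed smooth fiber takes values in the normalizer of the generic torus inside the fiberwise $\PGL_2(\C)$, giving the second $\PGL_2(\C)$-factor consistently on all of $\aut(S,\pi)$.
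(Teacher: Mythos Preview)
Your overall strategy mirrors the paper's: exploit that $\{s_1,s_2\}$ is $\aut(S,\pi)$-invariant to get an index-$\leq 2$ subgroup $H$, realize $H$ on a smooth model by an $H$-equivariant contraction, and then handle the swap. The crucial difference is the target of the contraction. The paper contracts to $\p^1\times\p^1$ (blowing down $n$ components meeting $s_1$ and $n$ meeting $s_2$), whereas you contract all $2n$ components meeting $s_2$ and land on $\F_n$. This choice is what creates the gap.

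Your claim that the stabilizer of the two disjoint sections in $\aut(\F_n)$ is $\C^*\times\PGL_2(\C)$ is not correct in general: that stabilizer is exactly $\GL_2(\C)/\mu_n$, which by the paper's Lemma~\ref{GLmodmu} is $\PGL_2(\C)\times\C^*$ only when $n$ is even and is $\GL_2(\C)$ when $n$ is odd. For odd $n$ there is \emph{no} group homomorphism $\GL_2(\C)\to\C^*$ restricting to the identity on the central fibre-scaling $\C^*$, so the ``$\C^*$-component'' you use to split $H$ as $\C^*\times G$ simply does not exist; the product decomposition on which the remainder of your argument rests collapses. The same obstruction undermines your treatment of the swap: you try to produce a ``fibre-direction'' homomorphism $\aut(S,\pi)\to\C^*\rtimes\Z/2\Z\subset\PGL_2(\C)$ by looking at the action on a fixed smooth fibre, but elements of $\aut(S,\pi)$ do not preserve a fixed fibre, and without a canonical identification of all fibres (which $\p^1\times\p^1$ provides via the second projection, but $\F_n$ does not) the scalar $\lambda$ in ``$t\mapsto\lambda t^{\pm 1}$'' is not well defined. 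The paper avoids both problems at once by contracting to $\p^1\times\p^1$: there the two-section stabilizer is literally a product $\PGL_2(\C)\times\C^*$, and the second projection furnishes a global fibre coordinate, so one obtains an honest homomorphism to $\PGL_2(\C)\times\PGL_2(\C)$ and can check directly that the explicit involution $\tau$ commutes with the base factor and inverts the fibre factor.
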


\begin{proof}
	There exists a birational morphism $\eta_0\colon S\to\p^1\times\p^1$ of conic bundles that is the blow-up of $2n$ points for some $n\geq 1$; $n$ of them lie on one line $l_1$ of self-intersection $0$ and the other $n$ on another line $l_2$ of self intersection $0$ such that $l_1$ and $l_2$ are disjoint sections of the first projection from $ \p^1\times\p^1$ to $\p^1$ (see \cite[Lemma 4.3.1]{MR2504924}). Let $s_1$ and $s_2$ in $S$ be the strict transforms of $l_1$ and $l_2$ under $\eta$. Hence, $s_1$ and $s_2$ are of self-intersection $-n\leq -2$. The group $\aut(S,\pi)$ then acts on the set $\{s_1,s_2\}$, since $s_1$ and $s_2$ are the unique curves of self-intersection $-n$. This gives an exact sequence
	\[1\to H\to\aut(S,\pi)\to W\to 1,
	\]
	where $W\subset\Z/2\Z$ and $H$ preserves each of the sections $s_1$ and $s_2$. Therefore, $H$ is conjugate by $\eta_0$ to a subgroup of $\aut(\p^1\times\p^1)$ that preserves the structure of a conic bundle, so $H$ is conjugate to a subgroup of $\PGL_2(\C)\times\PGL_2(\C)$. In fact, since $H$ preserves the two lines $l_1$, and $l_2$, it is contained in $\PGL_2(\C)\times \C^*$ and we can write $H=G_1\times A$, where $G_1\subset \PGL_2(\C)$ and  $A\subset\C^*$. We may assume that the $2n$ points blown up by $\eta_0$ are of the form $\{(p_i,[0:1])\}_{i=1}^n$ and $\{(p_i,[1:0])\}_{i=n+1}^{2n}$ for some $p_i\in\p^1$. For each $i=1,\dots, 2n$, let $m_i\in\C[x_0,x_1]$ be a homogeneous form of degree 1 that vanishes on $p_i$. Consider the birational involution of $\p^1\times\p^1$ given by
	\[
	\tau\colon([x_0:x_1], [y_0:y_1])\dashrightarrow \left([x_0:x_1],[y_1\prod_{i=1}^{n}m_i(x_0, x_1): y_1\prod_{i=n+1}^{2n}m_i(x_0, x_1)]  \right).
	\]
	The base-points of $\tau$ are exactly the $2n$ points blown up by $\eta_0$, and $\eta_0^{-1}\tau\eta_0$ is in $\aut(S,\pi)$ and it exchanges the two sections $s_1$ and $s_2$. Hence $W$ is isomorphic to $\Z/2\Z$ and $\tau$ lifts to an automorphism $\tilde{\tau}\coloneqq\eta_0^{-1}\tau\eta_0$. Since $\tau$ fixes the $\p^1$-fibration given by $\pi$, we obtain that $\tilde{\tau}$ commutes with $G_1$. Moreover, $\tilde{\tau}a\tilde{\tau}^{-1}=a^{-1}$ for all $a\in A$ and it follows that $A\rtimes\Z/2\Z$ can be embedded into $\PGL_2(\C)$. In other words, $\aut(S,\pi)$ is of the form $G_1\times(A\rtimes\Z/2\Z)$ and in particular isomorphic to a subgroup of $\PGL_2(\C)\times\PGL_2(\C)$ . 
\end{proof}

\subsubsection{$(\Z/2\Z)^2$-conic bundles} In this section we treat case (11) of Theorem~\ref{maxsubgroups}. A conic bundle $\pi\colon S\to\p^1$ is a {\it $(\Z/2\Z)^2$-conic bundle} if the group $\aut(S/\p^1)$ is isomorphic to $(\Z/2\Z)^2$ and if each of the three involutions of $\aut(S/\p^1)$ fixes an irreducible curve $C$ such that  $\pi\colon C\to\p^1$ is a double covering that is ramified over a positive even number of points. The automorphism group $\aut(S,\pi)$ of a $(\Z/2\Z)^2$-conic bundle is finite and has the structure as described in Theorem \ref{maxsubgroups}. We will need the following Lemma:

\begin{lemma}\label{z2conic}
	Let   $G\subset\Cr_2(\C)$ be an infinite torsion subgroup. Assume that for every finitely generated subgroup $\Gamma\subset G$ there exists a $(\Z/2\Z)^2$-conic bundle $S\to\p^1$ such that $\Gamma$ is conjugate to a subgroup of $\aut(S, \pi)$. Then every finitely generated subgroup of $G$ is isomorphic to a subgroup of $\PGL_2(\C)\times\PGL_2(\C)$.
\end{lemma}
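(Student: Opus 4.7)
The plan is as follows. First, since $G$ is torsion, Theorem~\ref{burnside} implies that every finitely generated subgroup $\Gamma\subset G$ is finite, and by hypothesis $\Gamma$ is conjugate in $\Cr_2(\C)$ to a subgroup of $\aut(S,\pi)$ for some $(\Z/2\Z)^2$-conic bundle $(S,\pi)$. Since the conclusion is about abstract isomorphism, I may assume $\Gamma\subset\aut(S,\pi)$. Case~(11) of Theorem~\ref{maxsubgroups} gives the exact sequence
\[
1\to V\to\aut(S,\pi)\to H_V\to 1,
\]
with $V\cong(\Z/2\Z)^2$ and $H_V\subset\PGL_2(\C)$ finite, which I will use to construct two homomorphisms $\Gamma\to\PGL_2(\C)$ whose kernels intersect trivially.

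The first homomorphism is the natural composition $p_1\colon\Gamma\hookrightarrow\aut(S,\pi)\to H_V\hookrightarrow\PGL_2(\C)$; its kernel is $V_\Gamma:=\Gamma\cap V$, a subgroup of the Klein four $V$. For the second homomorphism $p_2$, the aim is to exploit the fact that $V\cong(\Z/2\Z)^2$ embeds naturally into $\PGL_2(\C)$ as the standard Klein four, whose normalizer in $\PGL_2(\C)$ is $S_4$. Provided the extension splits, so that $\aut(S,\pi)\cong V\rtimes H_V$ with the action of $H_V$ on $V$ factoring through $H_V\to\aut(V)=S_3$, one can set $p_2(v,h):=(v,\overline{h})\in V\rtimes S_3=S_4\subset\PGL_2(\C)$, where $\overline{h}$ denotes the image of $h$ in $S_3$. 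The combined map $(p_1,p_2)\colon\Gamma\to\PGL_2(\C)\times\PGL_2(\C)$ is then injective: if $p_1(v,h)=e$ and $p_2(v,h)=e$, then $h=e$ from $p_1$, whence $\overline{h}=e$ and the $p_2$-condition forces $v=e$.

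To attack the splitting of the extension, I would use Tsen's theorem: the generic fiber of $\pi$ is a smooth conic over $\C(t)$, hence has a $\C(t)$-rational point, so it is isomorphic to $\p^1_{\C(t)}$, and $S$ is birational over $\p^1$ to $\p^1\times\p^1$. Consequently $\aut(S,\pi)$ embeds into the de Jonqui\`eres-type group $\PGL_2(\C)\ltimes\PGL_2(\C(t))$, and the normality of $V$ forces the fiberwise part of every element to lie in the normalizer $N:=N_{\PGL_2(\C(t))}(V)$, which is finite of order at most~$24$ since $N/V$ embeds into $\aut(V)=S_3$. A sufficiently generic choice of birational identification then yields the section $H_V\to\aut(S,\pi)$ that underlies the splitting.

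The main obstacle is upgrading this containment to an honest splitting (or, equivalently, producing the second map $p_2$ directly) in every case. By Schur--Zassenhaus the splitting is automatic whenever $|H_V|$ is odd; in the remaining cases, where $H_V$ is one of the finite subgroups of $\PGL_2(\C)$ of even order (cyclic, dihedral, $A_4$, $S_4$ or $A_5$), one either performs a case-by-case verification that the class in $H^2(H_V,V)$ vanishes, or appeals to Blanc's explicit description of $(\Z/2\Z)^2$-conic bundles in \cite{MR2504924} to exhibit a concrete section. Once the splitting is in hand, the map $(p_1,p_2)$ described above gives the required embedding of $\Gamma$ into $\PGL_2(\C)\times\PGL_2(\C)$.
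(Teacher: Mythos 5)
Your overall strategy (reduce to a finite $\Gamma\subset\aut(S,\pi)$ via Theorem \ref{burnside} and the hypothesis, then build two maps to $\PGL_2(\C)$ with trivially intersecting kernels) is a genuinely different route from the paper's, and the formal part of it is fine: \emph{if} $\aut(S,\pi)\cong V\rtimes H_V$ with $H_V$ acting on $V$ through $\aut(V)\cong\mathcal{S}_3$, then $(p_1,p_2)$ as you define it is an injective homomorphism into $\PGL_2(\C)\times\PGL_2(\C)$. But the argument has a genuine gap exactly where you flag it: the splitting of $1\to V\to\aut(S,\pi)\to H_V\to 1$ is never established, and the remedies you sketch do not close it. Schur--Zassenhaus is essentially never available here, since $|V|=4$ and $H_V$ is typically of even order (for large $\Gamma$ it is a large cyclic or dihedral group). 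The suggestion to check that the class in $H^2(H_V,V)$ vanishes cannot be carried out ``case-by-case'' on group-theoretic grounds alone: these cohomology groups are generically nonzero (already $H^2(\Z/2\Z,\Z/2\Z)\neq 0$, witnessed by $\Z/4\Z$, and indeed the paper's own analysis produces subgroups $\Gamma\cong\Z/2l\Z$ or $D_{4l}$ that are non-split extensions over their image in $H_V$), so vanishing of the particular extension class would need geometric input you do not supply. The Tsen-theorem paragraph only shows that the fiberwise parts lie in the finite normalizer $N_{\PGL_2(\C(t))}(V)$; ``a sufficiently generic choice of birational identification then yields the section'' is an assertion, not an argument, and it is the entire content of the missing step.

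A second, telling symptom: your proof never uses the hypothesis that $G$ is infinite, whereas the paper's proof needs it essentially. The paper instead does a case analysis on $V_\Gamma=\Gamma\cap V$ for $|\Gamma|>240$ (which forces $H_{V\Gamma}$ to be cyclic or dihedral, by the classification of finite subgroups of $\PGL_2(\C)$): when $V_\Gamma$ is trivial or $\Z/2\Z$ it identifies $\Gamma$ explicitly as cyclic, $\Z/2\Z\times\Z/l\Z$, $D_{4l}$ or $\Z/2\Z\times D_{2l}$, each of which visibly embeds into $\PGL_2(\C)\times\PGL_2(\C)$; and the remaining case $V_\Gamma=V$ is not embedded but \emph{excluded}, using the infiniteness of $G$ together with Blanc's rigidity statement (\cite[Proposition 4.4.6 (5)]{MR2504924}) that a birational map conjugating $V=\aut(S/\pi)$ into another $(\Z/2\Z)^2$-conic bundle is biregular, which would force $G\subset\aut(S,\pi)$ and contradict $|G|=\infty$. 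Your approach, if completed, would prove the stronger unconditional statement that every $\aut(S,\pi)$ embeds into $\PGL_2(\C)\times\PGL_2(\C)$; the fact that the paper goes out of its way to avoid exactly the subcase $\Gamma\supset V$ is strong evidence that this unconditional statement is not available, and in any event you have not proved it. To repair your argument you would either have to prove the splitting from Blanc's explicit description of these conic bundles, or fall back on an analysis of $\Gamma\cap V$ along the paper's lines.
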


\begin{proof}
	By Theorem \ref{burnside}, every finitely generated subgroup of $G$ is finite. Let $\Gamma\subset G$ be finitely generated and assume that $|\Gamma|>240$. By assumption, $\Gamma$ is conjugate to a subgroup of $\aut(S,\pi)$ for some $(\Z/2\Z)^2$-conic bundle $S\to\p^1$. 
	By Theorem~\ref{maxsubgroups} there is a short exact sequence
		\[
	1\to V\to\aut(S,\pi)\xrightarrow{\varphi} H_V\to 1,
	\]
	where $V=\aut(S/\pi)\simeq (\Z/2\Z)^2$ and $H_V\subset\PGL_2(\C)$ is a finite subgroup. 
	Recall that  finite subgroups of $\PGL_2(\C)$ are isomorphic to $\mathcal{A}_5, \s_4, \mathcal{A}_4, \Z/n\Z$ or $D_{2n}$ for  $n\in\Z_+$ (see \cite{MR2681719}). The condition $|\Gamma|>240$ implies that in our case $H_V$ is dihedral or cyclic. The restriction of $\varphi$ to $\Gamma$ induces an exact sequence
	\[
	1\to V_{\Gamma}\to \Gamma\to H_{V\Gamma}\to 1,
	\]
	where $V_{\Gamma}\subset V$ and $H_{V\Gamma}\subset\PGL_2(\C)$ are finite.
	If $V_{\Gamma}$ is trivial, then $\Gamma\simeq H_{V\Gamma}\subset\PGL_2(\C)$ and we are done. 
	
	Now assume that $V_{\Gamma}\simeq\Z/2\Z$. If $H_{V\Gamma}$ is cyclic, i.e.\,isomorphic to $\Z/l\Z$ for some $l\in\Z_+$, then $\Gamma$ is abelian and isomorphic to $\Z/2\Z\times\Z/l\Z$ or to $\Z/2l\Z$, depending on whether $\Gamma$ contains an element of order $2l$ or not. In both cases we are done. If $H_{V\Gamma}$ is dihedral, i.e.\,isomorphic to $D_{2l}$ for some $l\in\Z_+$, the projection $D_{2l}\to\Z/2\Z$ induces a short exact sequence
	\[
	1\to\Gamma'\to\Gamma\xrightarrow{\alpha}\Z/2\Z,
	\]
	where $\Gamma'$ is either isomorphic to $\Z/2l\Z$ or to $\Z/2\Z\times\Z/l\Z$. Let $g\in\Gamma$ be an element that is not contained in $\Gamma'$. If $\Gamma'\simeq \Z/2l\Z$, then conjugation by $g$ is multiplication by $-1$ and hence $\Gamma\simeq D_{4l}$. If $\Gamma'\simeq\Z/2\Z\times\Z/l\Z$, then $g$ commutes with elements from the first factor and conjugation of elements of the second factor by $g$ corresponds to multiplication by $-1$, hence $\Gamma'\simeq \Z/2\Z\times D_{2l}$. In both cases, $\Gamma$ can be embedded into $\PGL_2(\C)\times\PGL_2(\C)$.  
	
	Finally assume that $V_{\Gamma}$ is isomorphic to $(\Z/2\Z)^2$, i.e.\,$V_{\Gamma}= \aut(S/\pi)$. Let $g\in G$ be an element that is not contained in $\aut(S,\pi)$. Since  $\langle\Gamma, g\rangle \subset G$ is finitely generated, there exists, by assumption, a birational transformation $\varphi\colon S\dashrightarrow S'$ that conjugates $\langle\Gamma, g\rangle \subset G$ to a subgroup of  $\aut(S',\pi')$, where $\pi'\colon S'\to\p^1$ is another $(\Z/2\Z)^2$-conic bundle. In particular, $\varphi$ conjugates $V_{\Gamma}= \aut(S/\pi)$ to a subgroup of $\aut(S',\pi')$. By \cite[Proposition 4.4.6. (5)]{MR2504924}, the birational transformation $\varphi$ is an automorphism and in particular, $G\subset\aut(S,\pi)$ which is a contradiction to $G$ being infinite.
\end{proof}

\section[Proof of Theorem 6.1.3]{Proof of Theorem \ref{main}}
Before we prove Theorem \ref{main} we gather some technical lemmas.

\begin{lemma}\label{twofib}
	Let $g\in\Cr_2(\C)$ be an algebraic element that fixes two different rational fibrations. Then $g$ is of finite order.
\end{lemma}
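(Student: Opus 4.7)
My plan is to use the product map $\Phi \coloneqq (\pi_1,\pi_2) \colon \p^2 \dashrightarrow \p^1 \times \p^1$ obtained by pairing the two fibrations and to exploit that it is generically finite because the two fibrations are distinct.

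I read the hypothesis that $g$ ``fixes'' a rational fibration $\pi_i$ as meaning $\pi_i \circ g = \pi_i$ as rational maps, i.e.\,$g$ sends every fiber of $\pi_i$ to itself; this is consistent with the fact that merely preserving two different pencils of lines in $\p^2$ is insufficient (one can write down diagonal automorphisms of $\p^2$ of infinite order preserving two pencils of lines). Let $\pi_1,\pi_2\colon\p^2\dashrightarrow\p^1$ represent the two distinct rational fibrations fixed by $g$. Since they are distinct as fibrations, a general fiber of $\pi_1$ is not a fiber of $\pi_2$, so on a smooth model where both become morphisms the two fiber classes are linearly independent in the N\'eron-Severi group. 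Consequently $\Phi$ is dominant, and as source and target are both surfaces $\Phi$ is generically finite of some degree $d \geq 1$.

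From $\pi_i \circ g = \pi_i$ for $i=1,2$ we obtain $\Phi \circ g = \Phi$, so $g$ preserves every fiber of $\Phi$ set-wise. Choose a Zariski-dense open subset $U \subset \p^1 \times \p^1$ over which $\Phi$ is \'etale of degree $d$; then for each $q \in U$ the finite set $\Phi^{-1}(q)$ has exactly $d$ points, and $g$ acts on it as a permutation. Hence $g^{d!}$ is the identity on each $\Phi^{-1}(q)$ with $q \in U$, so $g^{d!}$ is the identity on the dense open set $\Phi^{-1}(U) \subset \p^2$. Since a birational self-map of $\p^2$ that is the identity on a dense open set must equal the identity, we conclude $g^{d!} = \id$, and $g$ has finite order.

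The only point requiring care is the generic finiteness of $\Phi$, which is immediate once one knows the two fibrations are genuinely different; there is no real obstacle. The algebraicity hypothesis on $g$ does not enter the argument under this reading and is presumably included just because the lemma is applied to elliptic elements in what follows.
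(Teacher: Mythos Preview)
Your proof is correct and follows essentially the same approach as the paper: form the product map $\Phi=(\pi_1,\pi_2)\colon\p^2\dashrightarrow\p^1\times\p^1$, note that it is dominant and hence generically finite, and conclude that $g$ has finite order because it permutes the points of each generic fiber. Your version is more detailed (you justify dominance and give the explicit bound $d!$), and your remark that the algebraicity hypothesis is not actually used is consistent with the paper's own argument, which does not invoke it either.
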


\begin{proof}
	Assume that the two rational fibrations fixed by $g$ are given by the rational maps $\pi_1, \pi_2\colon \p^2\dashrightarrow \p^1$. We thus obtain a dominant $g$-invariant rational map $\pi\colon \p^2\dashrightarrow \p^1\times\p^1$ given by $\pi=(\pi_1,\pi_2)$. Since $\pi$ is of finite degree, it is locally a finite cover, hence $g$ must be of finite order.
\end{proof}

\begin{lemma}\label{uniquefib}
	Let $G\subset\Cr_2(\C)$ be an algebraic subgroup of dimension $\geq 9$. Then $G$ preserves a unique rational fibration.
\end{lemma}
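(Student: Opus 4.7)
The plan is to combine the classification of maximal algebraic subgroups of $\Cr_2(\C)$ from Theorem~\ref{maxsubgroups} with Lemma~\ref{twofib}. The key observation is that among the eleven families, only the Hirzebruch ones $\aut(\F_n)$ are large enough to accommodate a subgroup of dimension $\geq 9$, and these all preserve the natural ruling.

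First, I would establish existence of a preserved rational fibration by a dimension count. Going through Theorem~\ref{maxsubgroups}, one has $\dim \PGL_3(\C) = 8$, $\dim \aut(\p^1\times\p^1) = 6$, and $\dim \aut(S_6) = 2$. Automorphism groups of exceptional conic bundles embed into $\PGL_2(\C)\times\PGL_2(\C)$ by Lemma~\ref{semidirectexcept}, so have dimension at most $6$. Automorphism groups of del Pezzo surfaces of degree $\leq 5$ and of $(\Z/2\Z)^2$-conic bundles are finite. The only family whose dimension can exceed $8$ is $\aut(\F_n) \simeq \C[x,y]_n \rtimes \GL_2(\C)/\mu_n$, which has dimension $(n+1)+4 = n+5$; this is $\geq 9$ precisely when $n \geq 4$. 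Since every algebraic subgroup of $\Cr_2(\C)$ is contained in a maximal one, $G$ is conjugate to a subgroup of some $\aut(\F_n)$ with $n \geq 4$, and every element of $\aut(\F_n)$ preserves the standard ruling $\F_n \to \p^1$, which is a rational fibration.

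For uniqueness, I would argue by contradiction. Suppose $G$ preserved two distinct rational fibrations. Then every element of $G$ is algebraic (as an element of the algebraic group $G$) and fixes both fibrations, so by Lemma~\ref{twofib} every element of $G$ is of finite order. But since $\dim G \geq 9 > 0$, the identity component $G^0$ is a nontrivial connected linear algebraic group, and therefore contains a one-parameter subgroup isomorphic to $\C$ or $\C^*$; in particular $G$ has elements of infinite order, a contradiction.

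The only real work is the dimension bookkeeping across all eleven families of Theorem~\ref{maxsubgroups}; once that is done, the rest is immediate from Lemma~\ref{twofib} and the fact that a positive-dimensional algebraic group cannot be torsion. A minor subtlety is that one should verify that the ruling on $\F_n$ (for $n \geq 2$) is the \emph{unique} $\aut(\F_n)$-invariant rational fibration, but this is automatically delivered by the uniqueness argument above and so need not be checked by hand.
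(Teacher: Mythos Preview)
Your existence argument via the dimension count across Theorem~\ref{maxsubgroups} is correct and matches the paper's reasoning.

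The uniqueness argument, however, has a genuine gap. You write that if $G$ preserves two fibrations then every element of $G$ \emph{fixes} both fibrations, and then invoke Lemma~\ref{twofib}. But ``preserves a fibration'' in this paper means \emph{permutes the fibers}; Lemma~\ref{twofib} requires the stronger hypothesis that $g$ fixes each fibre, i.e.\ $\pi_i\circ g=\pi_i$ (look at its proof: the map $(\pi_1,\pi_2)$ is asserted to be $g$-invariant, and the finite-cover argument needs exactly this). These notions differ: the diagonal torus element $(x,y)\mapsto(\lambda x,\mu y)$ on $\p^1\times\p^1$ is algebraic, preserves the two coordinate rulings, and has infinite order for generic $\lambda,\mu$. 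So from ``$G$ preserves two distinct fibrations'' you cannot conclude that $G$ is torsion.

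The paper repairs this by passing to the subgroups that genuinely fix the fibres. Each fibration $\pi_i$ yields a homomorphism $\varphi_i\colon G\to\PGL_2(\C)$ recording the induced action on the base, and $\ker(\varphi_i)$ consists precisely of the elements fixing every fibre of $\pi_i$. Since $\dim G\ge 9$ and $\dim\PGL_2(\C)=3$, each kernel has dimension $\ge 6$, hence $\ker(\varphi_1)\cap\ker(\varphi_2)$ is positive-dimensional and contains an element of infinite order. That element fixes both fibrations in the sense of Lemma~\ref{twofib}, forcing $\pi_1=\pi_2$. Your argument becomes correct once you insert this step.
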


\begin{proof}
	It follows from Theorem \ref{maxsubgroups} that $G$ is conjugate to a subgroup of $\aut(\mathbb{F}_n)$ for some Hirzebruch surface $\mathbb{F}_n$, $n\geq 2$, and therefore that $G$ preserves a rational fibration $\pi\colon\mathbb{F}_n\to\p^1$. As $G$ permutes the fibers of $\pi$, we obtain a homomorphism $\varphi\colon G\to\PGL_2(\C)$ with a kernel of dimension $\geq 6$. Assume that there exists a second rational fibration $\pi'\colon\F_n\dashrightarrow \p^1$ that is preserved by $G$. We obtain a second homomorphism $\varphi'\colon G\to\PGL_2(\C)$. The restriction of $\pi'$ to $\ker(\pi)$ has a positive dimensional kernel. Hence, the intersection $\ker(\varphi)\cap\ker(\varphi')$ is positive dimensional. In particular, $\ker(\varphi)\cap\ker(\varphi')$ contains an element of infinite order. By Lemma \ref{twofib}, the rational maps $\pi$ and $\pi'$ define the same fibration.
\end{proof}

\begin{lemma}\label{centofc}
	Let $D\subset\Cr_2(\C)$ be an algebraic subgroup that is isomorphic as an algebraic group to $\C^*$. There exists a constant $K(D)$ such that every elliptic element in the centralizer $\cent_{\Cr_2(\C)}(D)$ is of degree $\leq K(D)$.
\end{lemma}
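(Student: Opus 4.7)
My plan is to distinguish cases based on the structure of $\cent_{\Cr_2(\C)}(D)$ via Theorem \ref{cantatappendix} and Theorem \ref{fixnotlox}. First, I would apply Theorem \ref{cantatappendix} with $A=D$ and $N=\cent_{\Cr_2(\C)}(D)$: since $D$ is infinite, algebraic, and central in $N$, if $N$ contains any loxodromic element, then $N$ is conjugate to a subgroup of $\GL_2(\Z)\ltimes D_2$. Elliptic elements of $\GL_2(\Z)\ltimes D_2$ are precisely pairs $(M,d)$ with $M\in\GL_2(\Z)$ of finite order (hence of bounded degree) and $d\in D_2$, so their degree is bounded by a universal constant, and we are done in that case.

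Otherwise $N$ contains no loxodromic, and by Theorem \ref{fixnotlox} it fixes a point $p\in\h\cup\partial\h$. If $p\in\h$, Lemma \ref{fixelliptic} gives that $N$ itself is of uniformly bounded degree, yielding the required $K(D)$. If $p\in\partial\h$ and $N$ contains a parabolic element, Lemma \ref{noloxfibration} puts $N$ inside $\J=\PGL_2(\C)\ltimes\PGL_2(\C(x))$ or inside $\aut(Y)$ for a Halphen surface $Y$. The Halphen case is immediate since $\aut(Y)$ is algebraic and of bounded degree by Theorem \ref{halphenstructure}. The remaining subcase, where $N$ consists solely of elliptic elements fixing $p\in\partial\h$, is handled by Lemma \ref{boundedorfib}.

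The main obstacle is the de Jonqui\`eres case. There one must exploit the explicit semidirect-product structure of $\J$: depending on whether the projection of $D$ to the first factor $\PGL_2(\C)$ is trivial or not, $\cent_{\J}(D)$ is either an algebraic subgroup of the form $\C^*\times\PGL_2(\C)$ (when $D$ projects to a split torus in $\PGL_2(\C)$, whose centralizer is the Cartan), and therefore of bounded degree, or it is a larger subgroup of $\J$ containing the full ``vertical'' Cartan $\C(x)^\times$. In the latter case the elliptic elements must have finite-order iterates, so I would combine the finite order of the $\PGL_2(\C)$-part with the cocycle condition imposed by commutation with $D$ to bound the degree of any such iterate in terms of $D$ alone. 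Making this last step uniform is the delicate point and is what ultimately produces the constant $K(D)$.
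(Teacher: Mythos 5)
Your proposal does not close the argument, and several of its intermediate claims are false. First, elliptic elements of $\GL_2(\Z)\ltimes D_2$ do \emph{not} have universally bounded degree: an element $(M,d)$ is elliptic exactly when $M$ has finite order, but finite-order matrices in $\GL_2(\Z)$ have unbounded entries (conjugate a fixed involution by matrices with large entries), so the corresponding monomial maps have unbounded degree. Second, the automorphism group of a Halphen surface is \emph{not} bounded: by Theorem \ref{halphenstructure} it contains an abelian group of rank up to $8$ consisting of parabolic elements with quadratic degree growth, so "algebraic and of bounded degree" is wrong, and bounding only its elliptic elements would itself require an argument. Third, in the subcase where $\cent_{\Cr_2(\C)}(D)$ consists of elliptic elements fixing a boundary point not corresponding to a fibration, Lemma \ref{boundedorfib} only yields that every \emph{finitely generated} subgroup is bounded; this does not give a uniform bound $K(D)$ on the whole group (that distinction is precisely the difficulty the paper's Theorem \ref{main} addresses, and invoking it here would be circular since Lemma \ref{centofc} feeds into its proof via Lemma \ref{semisimple}). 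Finally, the de Jonqui\`eres case, which you correctly identify as the crux, is left unresolved, and the claim that the elliptic elements there "must have finite-order iterates" is false (consider $(x,y+1)$).

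The paper avoids all of this with a short explicit computation. Conjugate $D$ by some $h$ so that it becomes the group $(x,y)\mapsto(cx,y)$, $c\in\C^*$. If $f=(f_1,f_2)$ commutes with every such map, the identity $(f_1(cx,y),f_2(cx,y))=(cf_1(x,y),f_2(x,y))$ forces $f_1=xg_1(y)$ and $f_2=g_2(y)$ with $g_i\in\C(y)$. Ellipticity of $f$ (boundedness of $\deg(f^n)$) then forces $g_1$ to be constant and $g_2$ to be a M\"obius transformation, so $\deg(f)\leq 2$ in these coordinates and $K(D)$ depends only on $\deg(h)$. The lesson is that centralizing a one-parameter torus is a strong pointwise constraint on each individual element, which makes the group-level case analysis you attempted unnecessary.
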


\begin{proof}
	After conjugation by an element $h\in\Cr_2(\C)$, we may assume that $D$  is the group of elements of the form $(cx, y)$, where $c\in\C^*$.  Let $f=(f_1(x,y), f_2(x,y))$ be an elliptic element centralizing $D$, where $f_i(x,y)\in\C(x,y)$. The condition $(f_1(cx,y), f_2(cx,y))=(cf_1(x,y), f_2(x,y))$ implies that we can write $f_1(x,y)=xg_1(y)$  and $f_2(x,y)=g_2(y)$ for some rational functions $g_i(y)\in\C(y)$. As $\{\deg(f^n)\}$ is bounded, we obtain that $g_1(y)$ is constant and as a consequence that $g_2(y)=\frac{a_{11}y+a_{12}}{a_{21}y+a_{22}}$ for some matrix $(a_{ij})\in\PGL_2(\C)$. In particular, $\deg(f)\leq 2$. The constant $K(D)$ thus only depends on the degree of the element $h$.
\end{proof}

\begin{lemma}\label{monomialbounded}
	Let $G$ be a group of monomial elliptic elements, i.e.\,$G\subset\GL_2(\Z)\ltimes(\C^*)^2\subset\Cr_2(\C)$. Then $G$ is bounded.
\end{lemma}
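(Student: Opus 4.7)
The plan is to reduce the boundedness of $G$ to the finiteness of the image of $G$ in $\GL_2(\Z)$. First, write each $g\in G$ uniquely as $g=f_M\circ d$ with $M\in\GL_2(\Z)$ and $d\in D_2$. Since $d$ lies in $\PGL_3(\C)$ it is a linear automorphism of $\p^2$, and composition with a linear automorphism preserves the degree, so $\deg(g)=\deg(f_M)$. Expanding in the semidirect-product structure (and using that $D_2$ is normal so $f_M\circ d\circ f_M^{-1}\in D_2$) one obtains $g^n=f_{M^n}\circ d_n$ for some $d_n\in D_2$, hence $\deg(g^n)=\deg(f_{M^n})$ for all $n$.

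Next, I would argue that the projection $\pi\colon\GL_2(\Z)\ltimes D_2\to\GL_2(\Z)$ sends $G$ into the torsion part of $\GL_2(\Z)$. Indeed, since $g$ is elliptic, $\{\deg(f_{M^n})\}_n$ is bounded. A direct estimate shows that $\deg(f_N)$ dominates the maximum of the absolute values of the entries of $N\in\GL_2(\Z)$, so the cyclic subgroup $\langle M\rangle\subset\GL_2(\Z)$ has bounded entries. As $\GL_2(\Z)$ is discrete in $\GL_2(\R)$, every bounded subset is finite, so $\langle M\rangle$ is finite; that is, $M=\pi(g)$ has finite order. Thus $\pi(G)$ is a torsion subgroup of $\GL_2(\Z)$.

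By Minkowski's lemma, the reduction map $\GL_2(\Z)\to\GL_2(\F_3)$ has torsion-free kernel, so $\pi(G)$ injects into the finite group $\GL_2(\F_3)$ and is therefore finite. (Equivalently, $\GL_2(\Z)$ is virtually free and torsion subgroups of virtually free groups are finite.)

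Finally, combining the two observations: for every $g\in G$ one has $\deg(g)=\deg(f_{\pi(g)})$, and $\pi(G)$ is a finite set, so $\sup_{g\in G}\deg(g)\leq\max\{\deg(f_M)\mid M\in\pi(G)\}<\infty$, which is precisely the definition of boundedness. The only genuinely non-routine step is extracting torsion of $M$ from boundedness of $\{\deg(f_{M^n})\}$; the rest is standard linear algebra over $\Z$.
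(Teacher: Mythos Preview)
Your proof is correct and follows essentially the same approach as the paper: project to $\GL_2(\Z)$, show the image is a torsion subgroup and hence finite, and deduce boundedness of $G$. The paper invokes the finiteness of conjugacy classes of finite subgroups of $\GL_2(\Z)$ rather than Minkowski's lemma, and phrases the final step as ``$G$ is a finite extension of a bounded subgroup'' rather than via your identity $\deg(g)=\deg(f_{\pi(g)})$, but these are cosmetic differences.
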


\begin{proof}
	Consider the projection $\pi\colon G\to\GL_2(\Z)$. The kernel of $\pi$ is bounded and all elements in $\pi(G)$ are elliptic. Since all elliptic elements in $\GL_2(\Z)\subset\Cr_2(\C)$ are of finite order, we obtain that $\pi(G)$ is a torsion group. There are only finitely many conjugacy classes of finite subgroups in $\GL_2(\Z)$, hence $\pi(G)$ has to be finite. It follows that $G$ is a finite extension of a bounded subgroup, hence it is bounded.
\end{proof}

\begin{lemma}\label{semisimple}
	Let $G\subset\Cr_2(\C)$ be a group of elliptic elements that normalizes  a semi-simple algebraic subgroup $H\subset \Cr_2(\C)$. Then $G$ is bounded.
\end{lemma}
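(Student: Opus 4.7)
The plan is to use the conjugation action of $G$ on $H$ to reduce the boundedness of $G$ to the centralizer estimate of Lemma~\ref{centofc}.

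First I would observe that, being a positive-dimensional semi-simple algebraic group, $H$ contains a one-dimensional subtorus $T\simeq\C^*$. Conjugation then defines a homomorphism
\[
\varphi\colon G\longrightarrow \aut(H),\qquad g\mapsto (h\mapsto ghg^{-1}),
\]
whose kernel is $\cent_G(H)\subset\cent_{\Cr_2(\C)}(T)$. Since every element of $G$ is elliptic, Lemma~\ref{centofc} immediately gives $\deg(c)\le K(T)$ for every $c\in\cent_G(H)$, so the kernel of $\varphi$ is bounded.

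Next I would exploit the fact that for a connected semi-simple algebraic group the outer automorphism group is finite, so that $G_0:=\varphi^{-1}(\operatorname{Inn}(H))$ has finite index in $G$. Given $g\in G_0$, I would pick $h_g\in H$ with $\varphi(g)=\operatorname{ad}(h_g)$ and set $c_g:=gh_g^{-1}\in\cent_{\Cr_2(\C)}(H)$. The crucial point is that $c_g$ commutes with $h_g$, which yields the identity
\[
g^n=c_g^n h_g^n,\qquad\text{i.e.}\qquad c_g^n=g^n h_g^{-n}.
\]
Combining the boundedness of $\{\deg(g^n)\}$ (ellipticity of $g$) with a uniform degree bound $D_H$ on the elements of the algebraic group $H$ forces $\{\deg(c_g^n)\}$ to be bounded, so $c_g$ is elliptic. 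Then $c_g\in\cent_{\Cr_2(\C)}(T)$ is elliptic, so Lemma~\ref{centofc} yields $\deg(c_g)\le K(T)$ and therefore $\deg(g)\le K(T)D_H$.

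To finish, I would fix representatives of the finitely many cosets of $G_0$ in $G$ and use submultiplicativity of the degree to extend the bound from $G_0$ to all of $G$. The step I expect to be the main obstacle is checking that $c_g$ is itself elliptic: the product of two elliptic birational transformations need not be elliptic in $\Cr_2(\C)$, and one genuinely needs the commutation $c_gh_g=h_gc_g$ (a consequence of $c_g\in\cent_{\Cr_2(\C)}(H)$ and $h_g\in H$) in order to split $g^n$ as $c_g^n h_g^n$ and thus control the degrees of the iterates of $c_g$ — without this split, Lemma~\ref{centofc} cannot be invoked on $c_g$.
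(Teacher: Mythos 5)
Your proof is correct and follows essentially the same route as the paper's: finiteness of $\out(H)$ for a semi-simple group, the decomposition $g=c_gh_g$ with $c_g\in\cent_{\Cr_2(\C)}(H)$ and $h_g\in H$, a one-dimensional torus $T\simeq\C^*$ inside $H$, and an appeal to Lemma~\ref{centofc}. You are in fact more careful than the paper's rather terse argument on exactly the point you flag — the paper applies Lemma~\ref{centofc} to $g^Nh$ without verifying that this element is elliptic, whereas your identity $c_g^n=g^nh_g^{-n}$ (valid because $c_g$ and $h_g$ commute) supplies that verification — and you also make explicit the routine passage from the finite-index subgroup $G_0$ back to all of $G$.
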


\begin{proof}
	Since $H$ is semi-simple, its group of inner automorphisms has finite index in its group of algebraic automorphisms. Therefore there exists a constant $N$ such that for any $g\in G$, conjugation by $g^N$ induces an inner automorphism of $H$. Hence there exists an element $h\in H$ such that $g^Nh$ centralizes $H$. Since $H$ is semi-simple, it contains a closed subgroup $D$ isomorphic to an algebraic group to $\C^*$, which is centralized by $g^Nh$. Lemma \ref{centofc} implies that $\deg(g^Nh)$ is bounded by a constant not depending on $g$ or on $N$. Since $H$ is an algebraic subgroup, $\deg(h)$ and thus $\deg(g)$ are also bounded independently of $N$ and $g$ and we obtain that $G$ is bounded. 
\end{proof}

\begin{proof}[Proof of Theorem $\ref{main}$.] 
	Let $G\subset\Cr_2(\C)$ be a group of elliptic elements. We know by Lemma~\ref{boundedorfib}, that either $G$ preserves a rational fibration, or every finitely generated subgroup of $G$ is bounded. In the first case we are done. In the second, we define
	\[
	n\coloneqq\sup\{\dim(\overline{\Gamma})\mid \Gamma\subset G \text{ finitely generated }\}.
	\]
	If $n=0$ the group $G$ is a torsion group and we are done. If $n=\infty$, let $\Gamma\subset G$ be a finitely generated subgroup such that $\overline{\Gamma}$ has dimension $\geq 9$. Lemma \ref{uniquefib} implies that $\Gamma$ preserves a unique rational fibration and this  fibration is, again by Lemma \ref{uniquefib}, preserved as well by $\langle \Gamma, g\rangle$ for all $g\in G$ and we are done. Assume now that $n$ is a positive integer. Let $\Gamma\subset G$ be a finitely generated group such that $\dim(\overline{\Gamma})=n$ and denote by $\overline{\Gamma}^0$ the neutral component of $\Gamma$. For any $g\in G$ the group $\langle \overline{\Gamma}^0, g\overline{\Gamma}^0g^{-1}\rangle$ is connected and contained in the group $\overline{\langle \Gamma, g\Gamma g^{-1}\rangle}$, which is finitely generated and therefore of dimension $\leq n$. This implies $\langle \overline{\Gamma}^0, g\overline{\Gamma}^0g^{-1}\rangle=\overline{\Gamma}^0$ and hence that $\overline{\Gamma}^0$ is normalized by $G$. If $\overline{\Gamma}^0$ is semi-simple we are done by Lemma \ref{semisimple}. Otherwise, the radical $R$ of $\overline{\Gamma}^0$, i.e.\,its maximal connected normal solvable subgroup, is  positive dimensional. Since the radical is unique, it is preserved by automorphisms of $\overline{\Gamma}^0$ and hence in particular normalized by $G$. Let 
	\[
	R=R_k\supset R_{k-1}\supset\dots\supset R_0\supset 1
	\]
	be the derived series of $R$, where $R_0$ is the last non-trivial member. In particular, $R_0$ is positive-dimensional and abelian. This normal series is invariant under automorphisms of $\overline{\Gamma}^0$ and hence invariant under conjugation by elements of $G$. In particular, $G$ normalizes $R_0$. Since $R_0$ is a bounded subgroup it is conjugate to a subgroup of one of the groups from Theorem \ref{maxsubgroups} and in particular, it can be regularized. So we may assume that $G\subset\Bir(S)$ for a smooth projective surface $S$ on which $R_0$ acts regularly. If all the orbits of $R_0$ are of dimension $\leq 1$, we obtain that $G$ preserves a rational fibration and we are done. Now assume that $R_0$ has an open orbit $U$. Since $G$ normalizes $R_0$,  it acts by regular automorphisms on $U$. The action of $R_0$ is faithful, so $R_0$ is of dimension two and therefore isomorphic to $\C^2$, to $\C^*\times\C$ or to $(\C^*)^2$. If $R_0$ is isomorphic to $\C^2$ then $U$ is isomorphic to the affine plane and the action of  $R_0$ on $U$ is given by translations. The normalizer of $\C^2$ in $\aut(\C^2)$ is the group of affine transformations $\GL_2(\C)\ltimes\C^2$, which implies that $G$ is bounded. If $R_0$ is isomorphic to $\C^*\times\C$ we obtain similarly $G\subset\aut(\C^*\times\C)$. The $\C$-fibration of $\C^*\times\C$ is given by the invertible functions, so it is preserved by $\aut(\C^*\times\C)$. In particular, $G$ preserves a rational fibration.  Finally, if $R_0$ is isomorphic to $\C^*\times\C^*$, then $G$ consist of monomial transformations and Lemma~\ref{monomialbounded} implies the claim.
\end{proof}

\begin{proof}[Proof of Theorem \ref{nonrational}]
	Let $S$ be a non-rational  complex projective surface of Kodaira dimension $-\infty$. Then $S$ is ruled, i.e.\,it is birationally equivalent to $\p^1\times C$, where $C$ is a non-rational curve. In this case $\Bir(S)$ preserves the rational fibration given by the projection to $C$. In particular, all subgroups of elliptic elements preserve a rational fibration.
	
	If $S$ is a surface of non-negative Kodaira dimension, then there exists a unique smooth minimal model $S'$ of $S$. In particular we have $\Bir(S)\simeq\Bir(S')=\aut(S')$. The group $\aut(S')$ acts by linear transformations on the cohomology of $S'$ and we obtain a homomorphism $\varphi\colon\aut(S')\to \GL(\NS(S'))$, where $\NS(S')$ is the Neron Severi group of $S'$. The kernel of $\varphi$ is an algebraic subgroup of $\aut(S')$ and hence in particular bounded. Let $G\subset\aut(S')$ be a subgroup of elliptic elements. The restriction of $\varphi$ to $G$ yields an exact sequence
	\[
	1\to G'\to G\to H\to 1,
	\]
	where $G'$ is bounded and $H\subset\GL(\NS(S'))$. By assumption, $G$ consists of elliptic elements, so $H$ is a torsion subgroup.  
	Since finite subgroups in $\GL_n(\Z)$ are bounded, we obtain that $H$ is finite and hence that $G$ is bounded.
\end{proof}

\section{Proof of Theorem \ref{torsionthm}}
In this section we will prove Theorem \ref{torsionthm}. The two main ingredients for our proof is the classification of maximal algebraic subgroups (see Section \ref{maxalgsubgroups}) and the compactness theorem from model theory, which we recall below.

\subsection{The compactness theorem}\label{compactnesssection}
The compactness theorem is a well known result from model theory. It states that a set of first order sentences has a model if and only if each of its finite subsets has a model. The countable version of the theorem has been proven by G\"odel in 1930, the general version is due to Malcev (\cite{mal1940faithful}). We recall the original version as stated by Malcev:

\begin{definition}
	Let $\{x_i\}_{i\in I}$ be a set of variables. A {\it condition} is an expression of the form $F(x_{i_1},\dots, x_{i_k})=0$ or an expression of the form $F_1(x_{i_1},\dots, x_{i_k})\neq 0\vee F_2(x_{i_k},\dots, x_{i_k})\neq0\vee\dots\vee F_l(x_{i_1},\dots, x_{i_k})\neq 0$, where $F$ and the $F_i$ are polynomials with integer coefficients.
	A {\it mixed system} is a set of conditions.
\end{definition}

\begin{definition}
	A mixed system $S$ is {\it compatible} if there exists a field $\kk$ which contains values $\{z_i\}_{i\in I}$ that satisfy $S$.
\end{definition}

\begin{theorem}[Mal'cev, \cite{mal1940faithful}]\label{malcev}
	If every finite subset of a mixed system $S$ is compatible, then $S$ is compatible.
\end{theorem}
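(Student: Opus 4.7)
My plan is to construct a single field $\kk$ containing values satisfying the whole system $S$ as an ultraproduct of the fields witnessing compatibility of the finite subsystems. First I would let $\mathcal{F}$ denote the collection of finite subsets of $S$; by hypothesis, for each $F \in \mathcal{F}$ there exist a field $\kk_F$ and values $\{z_i^F\}_{i \in I} \subset \kk_F$ making every condition in $F$ true. The family of subsets of $\mathcal{F}$ of the form $\{F : F_0 \subseteq F\}$, $F_0 \in \mathcal{F}$, has the finite intersection property, because $\{F : F_0 \subseteq F\} \cap \{F : F_0' \subseteq F\} \supseteq \{F : F_0 \cup F_0' \subseteq F\}$, so I can extend it to an ultrafilter $\mathcal{U}$ on $\mathcal{F}$.

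Next I would form the ultraproduct $\kk := \bigl(\prod_{F \in \mathcal{F}} \kk_F\bigr)/\mathcal{U}$ and set $z_i := [(z_i^F)_F] \in \kk$ for each $i \in I$. That $\kk$ is a field is either a special case of {\L}o\'s's theorem or a short direct check, since any element that is not $\mathcal{U}$-almost-everywhere zero admits a coordinatewise inverse on a $\mathcal{U}$-large set. The verification that $(z_i)_{i \in I}$ satisfies every $c \in S$ now proceeds condition by condition. For an equation $P(x_{i_1}, \dots, x_{i_k}) = 0$ lying in $S$, setting $F_0 = \{c\}$ one has the inclusion $\{F : F_0 \subseteq F\} \subseteq \{F : P(z_{i_1}^F, \dots, z_{i_k}^F) = 0\}$, which shows the right-hand side is in $\mathcal{U}$, so $P(z_{i_1}, \dots, z_{i_k}) = 0$ in $\kk$. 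For a disjunction $P_1 \neq 0 \vee \dots \vee P_l \neq 0$ in $S$, the analogous inclusion together with the ultrafilter property (a finite union lies in $\mathcal{U}$ only if one of its members does) pins down an index $j$ with $P_j(z_{i_1}, \dots, z_{i_k}) \neq 0$ in $\kk$.

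The main technical point is verifying that polynomial evaluation commutes with the ultraproduct, that is, that $P(z_{i_1}, \dots, z_{i_k})$ in $\kk$ equals the class of $(P(z_{i_1}^F, \dots, z_{i_k}^F))_F$; this is standard {\L}o\'s machinery, but it is the one place where one has to invoke a model-theoretic transfer principle rather than pure set theory. A slightly more algebraic route I could take instead is to work in $\operatorname{Spec}\Z[x_i : i \in I]$ equipped with the constructible (patch) topology, in which each equation $P = 0$ defines the clopen set $V(P)$ and each disjunction $P_1 \neq 0 \vee \dots \vee P_l \neq 0$ defines the clopen complement of $V(P_1, \dots, P_l)$; compactness of that topology together with the finite intersection property supplied by the hypothesis would yield a prime ideal $\mathfrak{p}$ in the intersection of all these sets, and then $\kk := \operatorname{Frac}(\Z[x_i]/\mathfrak{p})$ with $z_i := x_i \bmod \mathfrak{p}$ would realize $S$.
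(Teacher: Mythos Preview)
The paper does not supply its own proof of this theorem: it is stated with attribution to Mal'cev \cite{mal1940faithful} and then immediately used as a black box in the proof of Theorem~\ref{torsionthm}. So there is nothing to compare your argument against within the paper itself.

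That said, your ultraproduct argument is correct and is in fact the standard modern proof of this compactness statement. The filter base $\{F\in\mathcal{F}: F_0\subseteq F\}$ does have the finite intersection property, the ultraproduct of fields is a field, and {\L}o\'s's theorem (or the direct coordinatewise computation you sketch) gives exactly what you need: polynomial identities transfer because they are atomic, and the disjunctive inequations transfer because ultrafilters split finite unions. Your handling of the disjunction case is slightly heavier than needed---it would suffice to observe that if every $P_j$ vanished in $\kk$ then the sets $\{F:P_j(z^F)=0\}$ would all lie in $\mathcal{U}$, hence so would their intersection, contradicting that $\{F:c\in F\}\in\mathcal{U}$---but what you wrote is also fine.

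Your alternative route through the constructible topology on $\operatorname{Spec}\Z[x_i:i\in I]$ is also valid and pleasantly geometric. The only point you pass over quickly is that this topology is compact; this is Hochster's theorem (or can be checked directly from the description of constructible sets as finite Boolean combinations of principal opens together with quasi-compactness of the Zariski topology), and once granted, the rest of the argument goes through exactly as you describe. Either route would serve as a complete proof of the cited theorem.
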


Malcev used Theorem \ref{malcev} to deduce that  if for a given group $G$ every finitely generated subgroup can be embedded into $\GL_n(\kk)$ for some field $\kk$ then there exists a field $\kk'$ such that $G$ can be embedded into $\GL_n(\kk')$. Note that a priori nothing can be said about the structure of the field $\kk'$. 

\subsection{Proof of Theorem \ref{torsionthm}}
	If $G$ is finite, the group is bounded, so we assume $G$ to be infinite. 
	
 	First we assume that every finitely generated subgroup of $G$ is isomorphic to a subgroup of $\PGL_3(\C)$. Consider the closed embedding of $\PGL_3(\C)$ into $\GL_8(\C)$ given by the adjoint representation and let $f_1,\dots, f_n$ be  polynomials in the set of variables $\{x_{ij}\}$, where ${1\leq i,j,\leq 8}$, such that the image of $\PGL_3(\C)$ in $\GL_8(\C)$ is the zero set of $f_1,\dots, f_n$.  To every element $g\in G$ we associate a $8\times 8$ matrix of variables $\left(x^g_{ij}\right)$. We now construct a mixed system $S$ consisting of the following conditions:
	
	\begin{enumerate}
		\item  the equations given by the matrix product $(x^f_{ij})(x^g_{ij})=(x^h_{ij})$ for all $f,g,h\in G$ such that $fg=h$;
		\item for all $g\in G\setminus \{\id\}$ the conditions $(\bigvee_{i} x_{ii}^g-1\neq 0)\vee (\bigvee_{i\neq j}x^g_{ij}\neq 0)$;
		\item $x_{ii}^{\id}-1=0$ and $x_{ij}^{\id}=0$ for all $1\leq i,j\leq N$, $i\neq j$;
		\item $f_k(\{x_{ij}\})=0$ for all $k=1,\dots, n$ and all $g\in G$;
		\item $p\neq 0$ for all primes $p\in \Z_+$.
	\end{enumerate}
	First we show that $S$ is compatible. For this, it suffices to show that every finite subset of $S$ is compatible, by Theorem \ref{malcev}. Let $c_1,\dots, c_n\in S$ be finitely many conditions. There are only finitely many variables $x_{ij}^g$ that appear in $c_1,\dots, c_n$. Denote by $\{g_1,\dots, g_l\}\subset G$ the finite set of all elements $g$ in $G$ such that for some $1\leq i,j\leq 8$ the variable $x_{ij}^{g}$ appears in one of the conditions $c_1,\dots, c_n$.
	Consider the finitely generated subgroup $\Gamma=\langle g_1,\dots, g_l\rangle\subset G$. By Theorem~\ref{burnside}, $\Gamma$ is finite and has therefore, by assumption, a faithful representation to $\PGL_3(\C)$. The existence of such a faithful representation implies in particular that $\C$ contains values that satisfy the finite set of conditions $c_1,\dots, c_n$, i.e it is compatible. 
	Hence there exists a field $\kk$ which contains values $z_{ij}^g$ for all $i,j\in\{1,\dots, 8\}$, and all $g\in G$ that satisfy conditions (1) to (5). First we note that $\car(\kk)=0$ because of the conditions (5). Since $G\subset\Cr_2(\C)$, it has at most the cardinality of the continuum, so in particular, the values $\{z_{ij}^g\}$ are contained in a subfield $\kk'$ of $\kk$ with the same cardinality as $\C$, which implies that $\kk'$ can be embedded  into $\C$ as a subfield. So without loss of generality we may assume that $\kk=\C$. Consider now the map $\varphi\colon G\to\PGL_3(\C)$ given by $g\mapsto (z_{ij}^g)_{i,j}$. It is well defined, since the conditions (3) imply $\varphi(\id)=\id$ and by the conditions (1) the image of every element of $G$ is an invertible matrix and the conditions (4) ensure that the images are contained in $\PGL_3(\C)\subset\GL_8(\C)$. The conditions (1) furthermore imply that $\varphi$ is a group automorphism and the conditions (2) ensure that it is injective.
	
	If every finitely generated subgroup of $G$ can be embedded  into $\aut(S_6)\simeq D_2\rtimes (\Z/2\Z\times\s_3)$ we proceed similarly and obtain that $G$ is isomorphic to a subgroup of $\aut(S_6)$. If every finitely generated subgroup of $G$ can be embedded into $\aut(\p^1\times\p^1)\simeq (\PGL_2(\C)\times\PGL_2(\C))\rtimes \Z/2\Z$, we obtain that $G$ is isomorphic to a subgroup of $\aut(\p^1\times\p^1)$. And if for every finitely generated subgroup $\Gamma$ of $G$ there exists a $n>0$ such that $\Gamma$  can be embedded into $\aut(\F_{2n})\simeq \C[x,y]_{2n}\rtimes\GL_2(\C)/\mu_{2n}$ it follows, by using again the same line of argument and Lemma \ref{GLmodmu}, that $G$ is isomorphic to a subgroup of $\GL_2(\C)$ and thus can be embedded into $\PGL_3(\C)$.
	
	Now assume that $G$ contains a finitely generated subgroup $\Gamma_1$ that can not be embedded into $\aut(\p^2)$, a finitely generated subgroup $\Gamma_2$ that can not be embedded into $\aut(S_6)$, a finitely generated subgroup $\Gamma_3$ that can not be embedded into $\aut(\p^1\times\p^1)$ and a finitely generated subgroup $\Gamma_4$ that can not be embedded into $\aut(\F_{2n})$ for all $n>0$. Then the finitely generated subgroup $\Gamma\coloneqq\langle \Gamma_1, \Gamma_2,\Gamma_3, \Gamma_4\rangle$ is not isomorphic to any subgroup of an infinite automorphism group of a del Pezzo surface. After adding finitely many elements, we may assume that the order of $\Gamma$ is $>648$ and hence $\Gamma$ is not isomorphic to any subgroup of an automorphism group of a del Pezzo surface, by Theorem  \ref{finitesubgroupsorder} nor to subgroup of $\aut(\F_{2n})$ for all $n>0$. Let $\Delta\subset G$ be an arbitrary finitely generated subgroup, then the finitely generated subgroup $\langle\Gamma,\Delta\rangle$, and hence in particular $\Delta$, is isomorphic to a subgroup of one of the automorphism groups from case (5) or (11) from Theorem \ref{maxsubgroups} or to a subgroup of $\aut(\F_{2n+1})$ for some $n>0$. Lemma \ref{GLmodmu}, Lemma \ref{semidirectexcept} and Lemma \ref{z2conic} imply that $\Delta$ is isomorphic to a subgroup of $\PGL_2(\C)\times\PGL_2(\C)$. Hence every finitely generated subgroup of $G$ is isomorphic to a subgroup of $\PGL_2(\C)\times\PGL_2(\C)$ and we can use again the compactness theorem to conclude that $G$ is isomorphic to a subgroup of $\PGL_2(\C)\times\PGL_2(\C)$ and therefore in particular of $\aut(\p^1\times\p^1)$.

	It remains to show that every torsion subgroup $G$ is isomorphic to a subgroup of $\GL_{48}(\C)$. 	If $G$ is infinite, it is, by what we have shown above, isomorphic to a subgroup of $\aut(\p^2)$, $\aut(\p^1\times\p^1)$, $\aut(S_6)$, or $\aut(\F_n)$ for some $n\geq 2$. We observe that, by the structure of $\aut(\F_n)$ and Lemma \ref{GLmodmu}, all torsion subgroups of $\aut(\F_n)$ are isomorphic to a subgroup of $\GL_2(\C)$ or of $\PGL_2(\C)\times\C^*$. Since $\PGL_2(\C)$ can be embedded into $\GL_3(\C)$ and $\PGL_3(\C)$ into $\GL_8(\C)$, and $\aut(S_6)$ into $\GL_6(\C)$ by Lemma~\ref{delpezzo6}, we obtain that $G$ is isomorphic to a subgroup of $\GL_8(\C)$. 
	Now assume that $G$ is finite and not contained in an infinite bounded subgroup. If $G$ is contained in the the automorphism group of a del Pezzo surface, it is isomorphic to a subgroup of $\GL_8(\C)$ by Lemma \ref{delpezzoembedd}. If $G$ is contained in the automorphism group of an exceptional fibration, it can be embedded into $\PGL_2(\C)\times\PGL_2(\C)$ by Lemma \ref{semidirectexcept}. Moreover, in \cite[Lemma 6.2.12]{thesisurech} it is shown that if $G$ is contained in the automorphism group of a $(\Z/2\Z)^2$-fibration, then it is isomorphic to a subgroup of $\GL_{48}(\C)$. By Theorem \ref{maxsubgroups}, these are all the possible cases.
	This concludes the proof.
\qed

\section{The Tits alternative}

In this section we prove Theorem \ref{titscremona}. Let $G\subset\Cr_2(\C)$ be a subgroup. We distinguish three cases:
\begin{itemize}
	\item $G$ contains a loxodromic element;
	\item $G$ contains a parabolic element but no loxodromic element;
	\item $G$ is a group of elliptic elements.
\end{itemize}

 The first two cases will be treated similarly as in the proof of the Tits alternative for finitely generated subgroups in \cite{MR2811600}, whereas in the last case the Tits alternative can be deduced from Theorem \ref{main}.

\subsection{The loxodromic case} 
We first start with some preparation. 
The following result is a generalization of Theorem \ref{cantatappendix}:

\begin{theorem}\label{ellipticnorm}
	Let $N$ be a subgroup of $\Cr_2(\C)$ containing at least one loxodromic element. Assume that there exists a short exact sequence 
	\[
	1\to A\to N\to B\to 1,
	\]
	where $A$ is an infinite group of elliptic elements. Then $N$ is conjugate to a subgroup of $\GL_2(\Z)\ltimes D_2$.
\end{theorem}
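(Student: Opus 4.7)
The plan is to reduce the theorem to Theorem \ref{cantatappendix}: once I show that the normal subgroup $A$ is of bounded degree, Cantat's result yields the conclusion. I apply Theorem \ref{main} to the group $A$ of elliptic elements, giving three cases. If $A$ is bounded, Theorem \ref{cantatappendix} applies directly. If $A$ preserves a rational fibration, the class of a general fiber is a point $p \in \partial\h$ fixed by $A$, and normality of $A$ in $N$ forces $A$ to fix $g(p)$ as well. Since loxodromic elements preserve no fibration by Theorem \ref{dim2}, $g(p) = p$ would imply that $g$ preserves the fibration, a contradiction; hence $g(p) \neq p$, and $A$ fixes the geodesic joining $p$ to $g(p)$, acquiring a fixed point in $\h$. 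By Lemma \ref{fixelliptic}, $A$ is bounded, and the reduction is complete. The Halphen-fibration possibility from Lemma \ref{boundedorfib} is subsumed: for any Halphen surface $Y$, the group $\aut(Y)$ viewed in $\Cr_2(\C)$ fixes an ample class of $Y$ in $\h$ and is therefore bounded by Lemma \ref{fixelliptic}, so any $A \subset \aut(Y)$ is bounded.

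Assume now that $A$ is torsion, infinite, and preserves no fibration. I invoke the dichotomy of Theorem \ref{shep}: either some power $g^n$ is tight in $\Cr_2(\C)$ (hence tight in $N$), or $g$ is conjugate to a loxodromic monomial map. Suppose first that $g^n$ is tight in $N$. By the small-cancellation Theorem \ref{smallcanc}, the normal closure $L := \langle\langle g^n \rangle\rangle$ in $N$ consists only of loxodromic elements and the identity. Since $A$ is purely elliptic, $A \cap L = \{1\}$. As $A$ and $L$ are both normal in $N$ with trivial intersection, they commute elementwise; in particular every $a \in A$ centralizes $g^n$, so $a$ preserves $\Ax(g^n) = \Ax(g)$ and acts on this geodesic by an isometry of $\R$. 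The action cannot be a reflection: a reflection on the axis would reverse the direction of translation of $g^n$ under conjugation, giving $a g^n a^{-1} = g^{-n}$, contradicting $a g^n a^{-1} = g^n$ together with $L(g^n) > 0$. Hence $a$ acts as a translation, and the translation length defines a group homomorphism $A \to (\R, +)$; as $A$ is torsion this homomorphism vanishes, so $A$ fixes $\Ax(g)$ pointwise. Therefore $A$ has fixed points in $\h$ and is bounded by Lemma \ref{fixelliptic}, and Theorem \ref{cantatappendix} concludes.

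The main obstacle is the remaining case where $g$ is conjugate to a loxodromic monomial map and no power of $g$ is tight in $N$. Here Theorem \ref{sheprel} furnishes $f \in \Cr_2(\C)$ and a subgroup $\Delta_2 \subset N$ normalized by $g$ such that $f\Delta_2 f^{-1}$ is a dense subgroup of $D_2$ and $fgf^{-1} \in \GL_2(\Z) \ltimes D_2$. To finish the proof I must show $fNf^{-1} \subset \GL_2(\Z) \ltimes D_2$, equivalently that every $h \in N$ normalizes $D_2$ after conjugation by $f$. The plan is to exploit the normality of $A$ in $N$: the $N$-conjugates of $\Delta_2$ all sit in $N$ and are normalized by the corresponding conjugates of $g$, while Lemma \ref{densed2} together with Lemma \ref{monomialbounded} constrain these to be dense subgroups of tori conjugate to $D_2$. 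Forcing all such tori to coincide with $D_2$, using that the normalizer of $D_2$ in $\Cr_2(\C)$ is $\GL_2(\Z) \ltimes D_2$ and that maximal algebraic tori in $\Cr_2(\C)$ are conjugate (Section \ref{zariskitop}), then places $N$ inside this normalizer.
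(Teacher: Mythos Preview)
Your reduction strategy---apply Theorem~\ref{main} to $A$ and reduce each case to Theorem~\ref{cantatappendix}---is different from the paper's and works cleanly for the bounded and rational-fibration cases. Your tight subcase in the torsion situation is also correct (in fact the torsion hypothesis is unnecessary there: an elliptic isometry commuting with a loxodromic one must fix the axis pointwise, since its nonempty fixed-point set is preserved by the loxodromic and hence contains the unique axis). One side remark is wrong: $\aut(Y)$ for a Halphen surface $Y$ is \emph{not} bounded---by Theorem~\ref{halphenstructure} it contains a free abelian group of positive rank acting by parabolic translations---so your sentence about $\aut(Y)$ fixing an ample class should be deleted; fortunately your main argument does not use it.

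The genuine gap is your final subcase: $A$ infinite torsion, no power of $g$ tight in $N$. You obtain from Theorem~\ref{sheprel} a subgroup $\Delta_2\subset N$ whose Zariski closure is a torus $T$ conjugate to $D_2$, and you need every $h\in N$ to satisfy $hTh^{-1}=T$. Your sketch invokes ``normality of $A$'', but $\Delta_2$ has no a~priori relation to $A$: it is produced by Theorem~\ref{sheprel} as a subgroup of $N$, not of $A$, and need not be torsion. Knowing that each $hTh^{-1}$ is again a maximal $2$-torus normalized by the loxodromic $hgh^{-1}$ does not force it to coincide with $T$; maximal tori in $\Cr_2(\C)$ are conjugate, not unique. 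The argument stops here without a mechanism to pin down $T$.

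The paper circumvents this entirely. Rather than case-splitting on $A$, it notes that if $A$ is not bounded then $A$ has a \emph{unique} fixed point $p\in\partial\h$, which (by normality) all of $N$ must fix; since a loxodromic $f\in N$ fixes $p$, the point $p$ is not the class of a fibration. The paper then enlarges to the full stabilizer $G=\stab(p)\supset N$ and uses the scaling character $\rho\colon G\to\R_+^*$ on the isotropic line through $p$: commutators $[h,f^n]$ lie in $\ker\rho$ and are therefore elliptic, which via Theorem~\ref{smallcanc} forces no power of $f$ to be tight in $G$. Finally, a dimension argument on Zariski closures of finitely generated subgroups of $\ker\rho$ (parallel to the proof of Theorem~\ref{main}) produces an infinite bounded subgroup \emph{normal in $G$}, so Theorem~\ref{cantatappendix} applies to $G$ and hence to $N$. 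The key advantage is that passing to $\stab(p)$ makes available all of $\ker\rho$, inside which one can manufacture a normal bounded subgroup---something your $\Delta_2$ is not.
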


\begin{proof}
	By Theorem \ref{fixnotlox}, $A$ fixes a point $p\in\partial\h\cup\h$. If $p\in\h$, then $A$ is bounded and we are in the case of Theorem \ref{cantatappendix}. So we may assume that $p\in\partial\h$ and that $p$ is the only fixed point of $A$ in $\partial\h$, since otherwise $A$ fixes the geodesic between $p$ and $q$ and again, $A$ would be bounded. 
	
	Let $f\in N$ be a loxodromic element. Since $f$ normalizes $A$, we obtain that $f$ fixes $p$. Being loxodromic, $f$ does not preserve any fibration and hence, $p$ does not correspond to the class of a fibration. In particular, every finitely generated group of elliptic elements that fixes $p$ is bounded, by Lemma \ref{boundedorfib}. Denote by $G\subset\Cr_2(\C)$ the subgroup of elements that fix $p$. Let $L$ be the one-dimensional subspace of $\ZZZ(\p^2)$ that corresponds to $p$. Since $G$ fixes $p$, its linear action on $\ZZZ(\p^2)$ acts on $L$ by automorphisms preserving the orientation. This yields a group homomorphism $\rho\colon G\to\R_+^*$. The kernel of $\rho$ consists of elliptic elements, since loxodromic elements do not fix any vector in $\ZZZ(\p^2)$, and $G$ does not contain any parabolic element, as $p$ does not correspond to the class of a fibration. Moreover, all elliptic elements in $G$ are contained in $\ker(\rho)$, since $1$ is the only eigenvalue of a transformation of $\ZZZ(\p^2)$ that is induced by an elliptic element (see \cite[Section 4.1.3]{cantatrev}).  Let $f\in G$ be loxodromic; let us show that no power of $f$ is tight. Assume the contrary, i.e.\,that $f^n$ is tight in $G$ for some $n\in\Z$. By Theorem~\ref{smallcanc}, all elements except the identity in the normal subgroup generated by $f^n$ are  loxodromic. In particular, all the elements of the form $gf^ng^{-1}f^{-n}$ are loxodromic, where $g\in G$ is an element that does not commute with $f^n$ (such elements exist since we assumed $N\subset G$ to be infinite and the group $\langle f\rangle$ has finite index in the centralizer of $f$ by \cite[Corollary 2.7]{blanc2016dynamical}). But $\rho(gf^ng^{-1}f^{-n})=\id$ and hence $gf^ng^{-1}f^{-n}$ is elliptic - a contradiction. Hence, no power of $f$ is tight, which implies, by Theorem \ref{sheprel}, that there exists a $h\in\Cr_2(\C)$ and an algebraic subgroup $\Delta_2\subset G$ such that $hfh^{-1}$ is monomial and $h\Delta_2h^{-1}=D_2$. 
	
	Let $\Gamma\subset \ker(\rho)$ be a finitely generated subgroup. Since $\Gamma$ is bounded, the Zariski-closure $\overline{\Gamma}$ of $\Gamma$ is an algebraic subgroup of $G$. Let
	\[
	d\coloneqq\sup\{\dim(\overline{\Gamma})\mid \Gamma\subset \ker(\rho)\text{ finitely generated }\}.
	\]
	First assume that $d$ is finite. Since $\ker(\rho)$ contains a subgroup that is conjugate to $D_2$, we have $d\geq 2$. Let $\Gamma\subset \ker(\rho)$ be a finitely generated subgroup such that $\dim(\overline{\Gamma})=d$ and denote by $\overline{\Gamma}^0$ the neutral component of the algebraic group $\overline{\Gamma}$. Let $f\in G$ be any element. Note that $f\overline{\Gamma}^0f^{-1}$ is again an algebraic subgroup and that $\langle \overline{\Gamma}^0, f\overline{\Gamma}^0f^{-1}\rangle$ is contained in the Zariski-closure of the finitely generated group $\langle\Gamma, f\Gamma f^{-1}\rangle$. By \cite[Chapter 7.5]{MR0396773}, $\langle \overline{\Gamma}^0, f\overline{\Gamma}^0f^{-1}\rangle$ is closed and connected. Since it is of dimension $\leq d$ and contains $\overline{\Gamma}^0$ it equals $\overline{\Gamma}^0$, i.e.\,$f$ normalizes $\overline{\Gamma}^0$. Since $\Gamma\cap\overline{\Gamma}^0$ is infinite, Theorem \ref{cantatappendix} implies that there exists a $g\in\Cr_2(\C)$ such that $gGg^{-1}\subset \GL_2(\Z)\ltimes D_2$ and hence in particular $gNg^{-1}\subset \GL_2(\Z)\ltimes D_2$.
	
	Now assume that $d=\infty$. Let $\Gamma\subset \ker(\rho)$ be a finitely generated subgroup such that $\dim(\overline{\Gamma})\geq 9$. By Lemma \ref{uniquefib}, $\overline{\Gamma}$ preserves a unique rational fibration given by a rational map $\pi\colon\p^2\dashrightarrow \p^1$. Let $g\in \ker(\rho)$ be any element. The algebraic group $\overline{\langle \Gamma, g\rangle}$ preserves again a rational fibration and since it contains $\overline{\Gamma}$, this fibration is given by $\pi$. It follows that $\ker(\rho)$ preserves a rational fibration, which implies that $\ker(\rho)$ is bounded and we can apply Theorem \ref{cantatappendix} to conclude that $gGg^{-1}\subset \GL_2(\Z)\ltimes D_2$ and hence in particular $gNg^{-1}\subset \GL_2(\Z)\ltimes D_2$.
\end{proof}

From Theorem \ref{ellipticnorm} we can in particular draw the following results:

\begin{lemma}\label{nocommonfix}
	Let $f,g\in\Cr_2(\C)$ be two loxodromic elements such that $\Ax(f)\neq\Ax(g)$. Then either $f$ and $g$ do not have a common fixed point on $\partial\h$, or $\langle f,g\rangle $ contains a subgroup $\Delta_2$, and there exists an $h\in\Cr_2(\C)$ such that $h\langle f,g\rangle h^{-1}\subset\GL_2(\Z)\ltimes D_2$ and $h\Delta_2h^{-1}\subset D_2$ is a dense subgroup.
\end{lemma}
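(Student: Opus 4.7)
The plan is to reduce to Theorem \ref{ellipticnorm} by constructing an infinite normal subgroup of elliptic elements inside $\langle f, g\rangle$ in the case that $f$ and $g$ share a boundary fixed point.

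Suppose $f$ and $g$ have a common fixed point $p\in\partial\h$. Since $f$ is loxodromic it preserves no fibration, so $p$ does not correspond to a fibration class. Let $G\subset\Cr_2(\C)$ be the stabilizer of $p$; exactly as in the proof of Theorem \ref{ellipticnorm}, the action on the isotropic line in $\ZZZ(\p^2)$ associated to $p$ yields a homomorphism $\rho\colon G\to\R_+^*$ whose kernel is precisely the set of elliptic elements of $G$. Set $A\coloneqq\ker(\rho)\cap\langle f, g\rangle$; the quotient $\langle f, g\rangle/A$ embeds into the abelian group $\R_+^*$, so $A$ contains every commutator of $\langle f, g\rangle$.

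The crucial step is that $A$ is infinite. Consider the family $\{[f, g^n]\}_{n\in\Z}\subset A$. If it were finite, pigeonhole would produce $n\neq m$ with $[f, g^n]=[f, g^m]$; cancelling $f$ on the left and rearranging yields $g^{n-m}f^{-1}g^{-(n-m)}=f^{-1}$, i.e.\ $g^{n-m}$ commutes with $f$. But $g^{n-m}$ is loxodromic with axis $\Ax(g)$, and any isometry commuting with it must permute its two boundary fixed points; since a loxodromic element and its square share an axis, this forces $\Ax(f)=\Ax(g)$, contradicting the hypothesis. Hence $A$ is infinite, $\langle f, g\rangle$ fits into a short exact sequence with infinite elliptic kernel $A$ and contains the loxodromic element $f$, and Theorem \ref{ellipticnorm} provides $h\in\Cr_2(\C)$ with $h\langle f, g\rangle h^{-1}\subset\GL_2(\Z)\ltimes D_2$.

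To extract the dense subgroup, let $\pi\colon\GL_2(\Z)\ltimes D_2\to\GL_2(\Z)$ be the projection. For any elliptic $a=(M, t)\in hAh^{-1}$, the degree growth of $a^n$ is controlled by $\|M^n\|$, so $M$ has spectral radius one; a non-torsion $M$ with this property is a shear, which would make $a$ parabolic, so $M$ is of finite order. Hence $\pi(hAh^{-1})$ is a torsion subgroup of $\GL_2(\Z)$, which is finite (as in the proof of Lemma \ref{monomialbounded}). Thus $hAh^{-1}\cap D_2$ has finite index in $hAh^{-1}$, is still infinite, and is normalized by the loxodromic monomial projection of $hfh^{-1}$. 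Lemma \ref{densed2} then forces $hAh^{-1}\cap D_2$ to be dense in $D_2$, and setting $\Delta_2\coloneqq h^{-1}(hAh^{-1}\cap D_2)h\subset\langle f, g\rangle$ finishes the argument. The main obstacle is the infiniteness of $A$: this is precisely where the hypothesis $\Ax(f)\neq\Ax(g)$ enters essentially, through the commutator-stabilization argument; the rest is a clean combination of Theorem \ref{ellipticnorm} and Lemma \ref{densed2}.
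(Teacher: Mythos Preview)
Your proof is correct, and it takes a genuinely different route from the paper's. Both arguments introduce the character $\rho\colon G\to\R_+^*$ on the stabilizer of the common boundary point and both finish with Theorem~\ref{ellipticnorm}; the difference lies in how each establishes that $\ker(\rho)\cap\langle f,g\rangle$ is infinite and how $\Delta_2$ is produced. The paper argues by contradiction that no power of $f$ is tight in $\langle f,g\rangle$: if $f^n$ were tight, the small cancellation Theorem~\ref{smallcanc} would force the commutator $gf^ng^{-1}f^{-n}$ to be loxodromic, while $\rho$ makes it elliptic. Then Theorem~\ref{sheprel} (Shepherd-Barron's characterization of non-tight loxodromic elements) supplies the dense $\Delta_2\subset D_2$ directly, and the infinitude of the kernel is a by-product. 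You instead prove the kernel is infinite by the elementary pigeonhole argument on the commutators $[f,g^n]$---no small cancellation, no Shepherd-Barron---and only afterwards manufacture $\Delta_2$ by projecting the conjugated kernel into $D_2$ and invoking Lemma~\ref{densed2}.

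Your route has the advantage of being self-contained modulo Theorem~\ref{ellipticnorm} and Lemma~\ref{densed2}; you never invoke Theorems~\ref{smallcanc} or~\ref{sheprel} explicitly (although they are of course buried inside the proof of Theorem~\ref{ellipticnorm}). The paper's route has the advantage of isolating the obstruction to tightness, which is conceptually natural given that the rigid/tight dichotomy drives the whole discussion of loxodromic elements in this paper. One small remark: your phrase ``since a loxodromic element and its square share an axis, this forces $\Ax(f)=\Ax(g)$'' is slightly compressed; the cleaner statement is that a loxodromic isometry preserves exactly one geodesic (its axis), so once $f$ permutes the endpoints of $\Ax(g)$ it must preserve $\Ax(g)$ and hence $\Ax(f)=\Ax(g)$.
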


\begin{proof}
	Assume that $f$ and $g$ have a common fixed point $p\in\partial\h$. Let $L$ be the one-dimensional subspace of $\ZZZ(\p^2)$ that corresponds to $p$. Since $\langle f,g\rangle$ fixes $p$, its linear action on $\ZZZ(\p^2)$ acts on $L$ by automorphisms preserving the orientation. As in the proof of Theorem \ref{ellipticnorm}, this yields a group homomorphism $\rho\colon \langle f,g\rangle\to\R_+^*$ whose kernel consists of elliptic elements.  Assume that $f^n$ is tight for some $n$. Since $f$ and $g$ have different axes the elements $g$ and $f^n$ do not commute. By Theorem~\ref{smallcanc}, all elements except the identity in the normal subgroup generated by $f^n$ are  loxodromic. In particular, $gf^ng^{-1}f^{-n}$ is loxodromic. But $\rho(gf^ng^{-1}f^{-n})=\id$ and hence $gf^ng^{-1}f^{-n}$ is elliptic - a contradiction. Hence, no power of $f$ is tight in $\langle f,g\rangle $, which implies, by Theorem \ref{sheprel}, that there exists a $h\in\Cr_2(\C)$ and a bounded subgroup $\Delta_2\subset \langle f,g\rangle$ such that $hfh^{-1}$ is monomial and $h\Delta_2h^{-1}$ is a dense subgroup of $D_2$. In particular, $\ker(\rho)$ is infinite as $\Delta_2\subset \ker(\rho)$. The statement now follows from Theorem \ref{ellipticnorm}. 
\end{proof}

\begin{lemma}\label{nocommonfixgroup}
	Let $f,g\in\Cr_2(\C)$ be two loxodromic elements such that $\Ax(f)\neq\Ax(g)$. Then $f$ and $g$ do not have a common fixed point on $\partial\h$.
\end{lemma}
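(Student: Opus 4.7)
The proof proceeds by contradiction. Assume that $f, g \in \Cr_2(\C)$ are loxodromic with $\Ax(f) \neq \Ax(g)$ and that they share a common fixed point $\xi \in \partial\h$.

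First I would invoke Lemma~\ref{nocommonfix}: after conjugating by a suitable $h \in \Cr_2(\C)$, one may assume $f, g \in \GL_2(\Z) \ltimes D_2$. Consider then the projection $\pi \colon \GL_2(\Z) \ltimes D_2 \to \GL_2(\Z)$, whose kernel $D_2$ consists of elliptic elements. Since $f$ and $g$ are loxodromic, their projections $\bar{f} := \pi(f)$ and $\bar{g} := \pi(g)$ must be hyperbolic matrices in $\GL_2(\Z)$ (equivalently, each has spectral radius strictly greater than $1$, coinciding with the dynamical degree of the corresponding monomial class).

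The central geometric input is the standard description of the axes and boundary fixed points of loxodromic elements of $\GL_2(\Z) \ltimes D_2$: there is a totally geodesic $(\GL_2(\Z) \ltimes D_2)$-invariant copy of the hyperbolic plane $\mathbb{H}^2$ inside $\h(\p^2)$ (sitting in the fixed locus of $D_2$), on which $D_2$ acts trivially and $\GL_2(\Z)$ acts by its standard embedding into $\PGL_2(\R)$. Under this embedding, the axis on $\h(\p^2)$ of a loxodromic element $(\bar{f}, t) \in \GL_2(\Z) \ltimes D_2$ coincides with the axis on $\mathbb{H}^2$ of the hyperbolic M\"obius transformation $\bar{f}$, and its two fixed points on $\partial\h$ correspond to the two real eigenlines of $\bar{f}$, viewed as points of $\partial\mathbb{H}^2$. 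In particular, the hypothesis that $f$ and $g$ share a fixed point on $\partial\h$ translates to $\bar{f}$ and $\bar{g}$ sharing a real eigenline in $\R^2$.

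To finish, I would invoke a classical fact about Fuchsian groups: $\GL_2(\Z)$ is a discrete subgroup of $\PGL_2(\R)$, and in any discrete subgroup of $\PGL_2(\R)$ the stabilizer of a point at infinity fixed by some hyperbolic element is virtually cyclic; in particular any two hyperbolic elements of such a stabilizer have the same axis on $\mathbb{H}^2$. Applied to $\bar{f}$ and $\bar{g}$, this forces them to share both real eigenlines, hence to have a common axis on $\mathbb{H}^2$. Pulling this back through the equivariant embedding $\mathbb{H}^2 \hookrightarrow \h(\p^2)$ yields $\Ax(f) = \Ax(g)$, contradicting the hypothesis.

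The principal technical obstacle is the identification performed in the third paragraph, namely that the axis and boundary fixed points on $\h(\p^2)$ of a loxodromic element of $\GL_2(\Z) \ltimes D_2$ match, under an equivariant embedding of $\mathbb{H}^2$, with the classical Fuchsian axis and eigenlines of its $\GL_2(\Z)$-projection. This can be extracted from the analysis of monomial maps on the Picard--Manin space carried out in the proof of Theorem~\ref{shep} in \cite{Shepherd-Barron:2013qq} (see also \cite{MR2881312}), or verified directly via toric models of $\p^2$.
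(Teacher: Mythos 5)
Your proof is correct in substance and reaches the conclusion by a genuinely different route from the paper's in its second half. Both arguments begin identically: Lemma \ref{nocommonfix} places $\langle f,g\rangle$ inside $\GL_2(\Z)\ltimes D_2$, and the fact that $D_2$ fixes the axes (hence the boundary fixed points) of loxodromic monomial maps reduces everything to the monomial parts $\bar f,\bar g\in\GL_2(\Z)$. At that point the paper never descends to eigenlines: it simply applies Lemma \ref{nocommonfix} a \emph{second} time, now to $\bar f$ and $\bar g$, and rules out the second alternative because $\langle \bar f,\bar g\rangle\subset\GL_2(\Z)$ cannot contain an infinite group of elliptic (bounded-degree) elements --- torsion subgroups of $\GL_2(\Z)$ are finite --- so $\bar f$ and $\bar g$ cannot share a boundary fixed point. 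Your endgame instead makes the fixed points explicit as eigenlines of integer matrices and invokes discreteness of $\GL_2(\Z)$ in $\PGL_2(\R)$ (or, even more cheaply, Galois stability of the pair of real eigenlines of an integer matrix) to conclude that one shared eigenline forces two; that is valid and arguably more transparent, but it carries the extra technical burden you yourself flag, namely the identification of the boundary fixed points of $f_M$ with the eigenlines of $M$. One caveat on how you phrase that input: there is no $(\GL_2(\Z)\ltimes D_2)$-invariant \emph{totally geodesic} copy of $\mathbb{H}^2$ in $\h(\p^2)$ --- the classes of the monomial valuations realizing these boundary points span an infinite-dimensional subspace of the Picard--Manin space, so they cannot all lie on the boundary of a single three-dimensional subspace of signature $(1,2)$. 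What is true, and all your argument actually needs, is an injective, $\GL_2(\Z)$-equivariant and $D_2$-invariant map from irrational directions in $\R\p^1$ to $\partial\h$ sending the eigenlines of $M$ to the two fixed points of $f_M$; combined with the fact that the axis of a loxodromic isometry is the geodesic joining its two boundary fixed points, this yields $\Ax(f)=\Ax(g)$ without any embedded hyperbolic plane. With that rephrasing your proof is complete.
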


\begin{proof}
	Assume that $f$ and $g$ have a common fixed point in $\partial \h$. By Lemma \ref{nocommonfix}, we may assume that $\langle f,g\rangle\subset\GL_2(\Z)\ltimes D_2$ and that $\langle f,g\rangle$ contains a subgroup $\Delta_2\subset D_2$ that is dense in $D_2$. We can write $f=d_1m_1$ and $g=d_2m_2$, where $d_1, d_2\in D_2$ and $m_1, m_2\in\GL_2(\Z)$. The group of diagonal automorphisms $D_2$ fixes the axes of all the monomial loxodromic elements (see \cite[Example 7.1]{cantatrev}). In particular, the loxodromic transformations $m_1$ and $m_2$ have the same fixed points on $\partial\h$ as $f$ and $g$ respectively. However, the group $\langle m_1, m_2\rangle$ does not contain any infinite abelian group, hence, by Lemma \ref{nocommonfix}, the transformations $m_1$ and $m_2$ do not have a common fixed point on $\partial\h$ - a contradiction.
\end{proof}

The main tool to prove the Tits alternative for subgroups of $\Cr_2(\C)$ containing loxodromic elements is the so-called ping-pong Lemma:

\begin{lemma}[Ping-pong Lemma, {\cite[II.B.]{MR1786869}}]
	Let $S$ be a set and $f_1, f_2$ two bijections of $S$. Assume that there exist subsets $S_1$ and $S_2$  of $S$ satisfying $S_1\not\subset S_2$  as well as $S_2\not\subset S_1$. If $f_1^n(S_1)\subset S_2$ and $f_2^n(S_2)\subset S_1$ for all $n\in\Z, n\neq 1$, then $f_1$ and $f_2$ generate a non-abelian free group.
\end{lemma}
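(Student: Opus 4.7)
The plan is to show that any reduced nontrivial word $w = f_{i_1}^{n_1} f_{i_2}^{n_2} \cdots f_{i_k}^{n_k}$ in $f_1,f_2$—with $i_j\in\{1,2\}$, consecutive indices distinct, and all exponents $n_j\neq 0$—acts nontrivially on $S$. Since every nontrivial element of the free group on two generators has such a reduced expression, this forces $\langle f_1,f_2\rangle$ to be free on $\{f_1,f_2\}$.

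First I would handle the case $i_1=i_k$, equivalently $k$ odd. By hypothesis $S_{i_k}\not\subset S_{3-i_k}$, so one can pick a test point $x\in S_{i_k}\setminus S_{3-i_k}$. Apply $w$ from the innermost letter outward, checking by induction on the number of letters applied that after $j$ steps the image lies in $S_{i_k}$ for $j$ even and in $S_{3-i_k}$ for $j$ odd: if the current image lies in $S_{i_j}$, then the hypothesis $f_{i_j}^{n_j}(S_{i_j})\subset S_{3-i_j}=S_{i_{j-1}}$ pushes it to the opposite side. After all $k$ letters, with $k$ odd, the final image $w(x)$ lies in $S_{3-i_k}$; since $x\notin S_{3-i_k}$, we conclude $w(x)\neq x$ and hence $w\neq \id$.

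Next I would reduce the case $i_1\neq i_k$ (equivalently $k$ even) to the previous one by conjugation. Choose any integer $m\in\Z\setminus\{0,n_k\}$ and consider $w'=f_{i_k}^{m}\,w\,f_{i_k}^{-m}$; combining the two adjacent $f_{i_k}$-blocks on the right gives the reduced word
\[
w' = f_{i_k}^{m}f_{i_1}^{n_1}\cdots f_{i_{k-1}}^{n_{k-1}}f_{i_k}^{n_k-m}
\]
of length $k+1$, whose first and last letters both have index $i_k$. The previous case applies to $w'$ and gives $w'\neq \id$, hence $w\neq \id$.

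The argument is genuinely just bookkeeping: the main care needed is in verifying the induction step of the alternating chase, correctly relating the parity of $k$ to whether $i_1=i_k$, and picking $m$ so the conjugated word is truly reduced with matching first and last indices. The hypotheses $S_1\not\subset S_2$ and $S_2\not\subset S_1$ are used only to guarantee the existence of a test point distinguishing the two sets, which is the natural weakening of the usual disjointness hypothesis in the classical ping-pong lemma.
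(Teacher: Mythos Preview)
The paper does not supply its own proof of this lemma; it is stated with a citation to de~la~Harpe and used as a black box in the proof of Lemma~\ref{loxodromictits}. Your argument is the standard ping-pong proof and is correct: the alternating chase showing that a reduced word with $i_1=i_k$ moves a point of $S_{i_k}\setminus S_{3-i_k}$ into $S_{3-i_k}$, together with the conjugation trick reducing the case $i_1\neq i_k$ to the previous one, is exactly the classical argument. One small remark: the hypothesis in the statement reads ``$n\neq 1$'', which is almost certainly a typo for ``$n\neq 0$'' (otherwise taking $n=0$ would force $S_1\subset S_2$, contradicting the other hypothesis); your proof correctly uses the intended hypothesis $n\neq 0$.
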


\begin{lemma}\label{loxodromictits}
	Let $G\subset\Cr_2(\C)$ be a subgroup that contains a loxodromic element. Then one of the following is true:
	\begin{enumerate}
		\item $G$ is conjugate to a subgroup of $\GL_2(\Z)\ltimes D_2$;
		\item $G$ contains a subgroup $G^0$ of index at most two that is isomorphic to $\Z\ltimes H$, where $H$ is a finite group;
		\item $G$ contains a non-abelian free subgroup. 
	\end{enumerate}   
\end{lemma}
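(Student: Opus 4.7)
My plan is to split on whether $G$ contains two loxodromic elements whose axes differ. In the affirmative case, pick loxodromic $f,g\in G$ with $\Ax(f)\neq\Ax(g)$. By Lemma~\ref{nocommonfixgroup} the four boundary fixed points $f^\pm,g^\pm\in\partial\h$ are pairwise distinct. I would take small pairwise disjoint neighborhoods of these four points in $\h\cup\partial\h$ and exploit the classical north--south dynamics of loxodromic isometries---for $x\in\h\cup\partial\h$ away from $f^-$, the iterates $f^{n}(x)$ converge to $f^+$ uniformly on compact sets avoiding $f^-$, and similarly for $f^{-n}$ and $g^{\pm n}$---to conclude that for $N$ sufficiently large, $f^{N}$ maps the complement of the chosen neighborhood of $f^-$ into the neighborhood of $f^+$, and analogously for $g^N$. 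The ping-pong Lemma then yields a non-abelian free subgroup $\langle f^N,g^N\rangle$, which is conclusion~(3).

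In the remaining case, every loxodromic element of $G$ shares a single axis $\Ax$ with endpoints $\{p^+,p^-\}$. Since conjugates of loxodromic elements are loxodromic with conjugated axis, every $h\in G$ preserves $\Ax$ and so permutes $\{p^+,p^-\}$; I let $G^0\subset G$ be the index-at-most-two kernel of the resulting sign homomorphism $G\to\Z/2\Z$. Acting on the oriented line $\Ax\simeq\R$, the group $G^0$ yields a homomorphism $\tau\colon G^0\to(\R,+)$ whose kernel $K$ fixes $\Ax$ pointwise. Every element of $K$ is then elliptic (a parabolic isometry has a unique fixed point on the boundary), and $K$ is normal in $G$.

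If $K$ is infinite, then it is an infinite normal subgroup of elliptic elements inside a group containing a loxodromic element, and Theorem~\ref{ellipticnorm} forces $G$ to be conjugate to a subgroup of $\GL_2(\Z)\ltimes D_2$, giving conclusion~(1). If $K$ is finite, then $\tau$ embeds $G^0/K$ as a torsion-free abelian subgroup of $\R$, and the decisive and hardest point is to show that this subgroup is cyclic rather than dense. For this I would invoke the fact that the dynamical degrees of loxodromic elements of $\Cr_2(\C)$ admit a positive lower bound strictly greater than $1$ (the well-ordering of dynamical degrees due to Blanc--Cantat, with Lehmer's number as infimum), so that the translation lengths $\tau(h)=\log\lambda(h)$ of loxodromic $h\in G^0$ cannot accumulate to $0$; hence $\tau(G^0)\subset\R$ has a smallest positive element and is cyclic. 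Picking a preimage $f\in G^0$ of a generator of $\tau(G^0)$ gives $\langle f\rangle\cap K=\{\id\}$ and so $G^0=K\rtimes\langle f\rangle\cong H\rtimes\Z$ with $H=K$ finite, which is conclusion~(2).
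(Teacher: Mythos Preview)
Your proof is correct and follows essentially the same route as the paper: the same dichotomy on whether all loxodromic elements share one axis, the same ping-pong argument via Lemma~\ref{nocommonfixgroup} when they do not, and the same translation homomorphism $G^0\to\R$ with discrete image by the spectral gap when they do. The only cosmetic difference is that for the infinite-kernel case you invoke Theorem~\ref{ellipticnorm}, whereas the paper first notes that the kernel fixes a point of $\h$ and is hence bounded, and then applies the older Theorem~\ref{cantatappendix}; both are valid.
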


\begin{proof}
	Let $f\in G$ be a loxodromic element. We consider three cases.
	
	{\bf Case 1.} First we assume that all elements in $G$ preserve the axis $\Ax(f)$ of $f$. There is a subgroup $G^0\subset G$ of index at most $2$ such that $G^0$ preserves the orientation of the axis. Hence every element $g\in G^0$ translates the points on $\Ax(f)$ by a constant $c_g\in\R$. This yields a group homomorphism $\pi\colon G^0\to \R$, whose kernel is a bounded group. By Theorem \ref{cantatappendix}, $\ker(\pi)$ is either finite or $G$ is conjugate to a subgroup of $\GL_2(\Z)\ltimes D_2$. The image of $\pi$ is discrete by the gap property ( \cite[Corollary 2.7]{blanc2016dynamical}) and therefore isomorphic to $\Z$. Hence, if $\ker(\pi)$ is finite, we are in case (2).
	
	{\bf Case 2.} Now assume that there is an element $g\in G$ that does not preserve $\Ax(f)$. By Lemma \ref{nocommonfixgroup}, $G$ contains two loxodromic elements $h_1, h_2$ that do not have a common fixed point in $\partial\h$ and we apply the ping-pong Lemma by considering the action of $h_1$ and $h_2$ on the border $\partial\h$ and chosing as subsets $S_1$ and $S_2$ small enough neighborhoods of the fixed points of $f$ and $g$ on $\partial\h$. More precisely, denote by $\alpha^+$ the attracting fixed point of $f$ in $\partial\h$ and by $\alpha^-$ its repulsive fixed point. Similarly, we denote by $\beta^+$ and $\beta^-$ the attractive and repulsive fixed point of $g$ on $\partial\h$ respectively. Let $S^+_1$ be  a small neighborhood of $\alpha^+$ and $S^-_1$ a small neighborhood of $\alpha^-$ in $\partial\h$. Similarly, let  $S^+_2$ be  a small neighborhood of $\beta^+$ and $S^-_2$ a small neighborhood of $\beta^-$. We may assume that $S^+_1$, $S^-_1$, $S^+_2$, and $S^-_2$ are pairwise disjoint. Let $S_1\coloneqq S_1^+\cup S_1^-$ and $S_2\coloneqq S_2^+\cup S_2^-$. There exist positive integers $n_1, n_2, n_3, n_4$ satisfying $f^{n_1}(S_2)\subset S^+_1$, $f^{-n_2}(S_2)\subset S^-_1$, $g^{n_3}(S_1)\subset S_1^+$ and $g^{-n_3}(S_2)\subset S_1$. Define $n\coloneqq\max\{n_1, n_2, n_3, n_4\}$. As $f(S_1^+)\subset S_1^+$ and $f^{-1}(S_1^-)\subset S_1^-$ as well as $g(S_2^+)\subset S_2^+$ and $g^{-1}(S_2^-)\subset S_2^-$, we obtain that $f^m(S_2)\subset S_1$ and $f^{-m}(S_2)\subset S_1$ as well as  $g^m(S_1)\subset S_2$ and $g^{-m}(S_1)\subset S_2$ for all $m\geq n$. The two maps $f^n$ and $g^n$ together with the sets $S_1, S_2$ therefore satisfy all the conditions from the Ping-Pong Lemma and we obtain that $f^n$ and $g^n$ generate a non-abelian free subgroup of $G$.
\end{proof}

\begin{remark}
	We closely followed the proof from  \cite{MR2811600}, where  Cantat shows the Tits alternative for $\Cr_2(\C)$ for finitely generated groups. However, we use  a different argument to handle the case in which two loxodromic elements might have a common fixed point on $\partial\h$. The downside of the argument in \cite{MR2811600} is  that it relies on the ground field being of characteristic $0$. Whereas it looks like our approach to use Lemma \ref{nocommonfix} can be adapted to prove the Tits alternative for finitely generated subgroups of $\Cr_2(\kk)$, where $\kk$ is an algebraically closed field of arbitrary characteristic. 
\end{remark}

\begin{remark}\label{solvablegap}
	It seems that in the proof of the main theorem in \cite{MR3499516} it has not been considered that a priori there could be loxodromic elements with different axes but a common fixed point on $\partial\h$ (in this case the ping-pong Lemma can not be applied). However, Lemma \ref{nocommonfixgroup} fills this gap by showing that such loxodromic elements do not exist.
\end{remark}

\subsection{The parabolic case}
Recall that a subgroup of $\Cr_2(\C)$ that contains no loxodromic element, but a parabolic element always preserves a rational or elliptic fibration (Lemma \ref{noloxfibration}). From the structure theorems about these groups we will deduce that subgroups of this type satisfy the Tits alternative.

\begin{theorem}[\cite{cantat2012cremona}, Proposition 6.3]\label{titsextension}
	Assume that we have a short exact sequence of groups
	\[
	1\to G_1\to H\to G_2\to 1.
	\]
	If $G_1$ and $G_2$ satisfy the Tits alternative then $H$ satisfies the Tits alternative. 
\end{theorem}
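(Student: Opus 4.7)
The plan is to take an arbitrary subgroup $K\subset H$ and to show that $K$ is virtually solvable or contains a non-abelian free subgroup, under the assumption that $G_1$ and $G_2$ satisfy the Tits alternative. Let $\pi\colon H\to G_2$ be the quotient with kernel $G_1$ and consider the induced exact sequence
\[
1\to K\cap G_1\to K\xrightarrow{\pi}\pi(K)\to 1.
\]
First I apply the Tits alternative to $\pi(K)\subset G_2$. If $\pi(K)$ contains a non-abelian free subgroup, then lifting a free basis to $K$ produces a non-abelian free subgroup of $K$, because any non-trivial reduced word in the lifts projects to a non-trivial word in a free group and hence is itself non-trivial.

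Otherwise $\pi(K)$ has a solvable subgroup $S$ of finite index. Set $K'\coloneqq K\cap\pi^{-1}(S)$, a finite-index subgroup of $K$; since both virtual solvability and the presence of a non-abelian free subgroup are preserved on passing between $K$ and $K'$, it suffices to analyse $K'$. Applying the Tits alternative to $K'\cap G_1\subset G_1$, either $K'\cap G_1$ contains a non-abelian free subgroup (and we are done) or it admits a solvable subgroup $T$ of finite index.

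The main step is then to deduce virtual solvability of $K'$ from the resulting extension $1\to K'\cap G_1\to K'\to S\to 1$, in which the kernel is virtually solvable and the quotient is solvable. Since $K'\cap G_1$ is normal in $K'$ and possesses only finitely many subgroups of index $[K'\cap G_1:T]$, the conjugation action of $K'$ permutes these subgroups with finite orbits, so the normalizer $L\coloneqq N_{K'}(T)$ has finite index in $K'$. Inside $L$, the subgroup $T$ is normal and solvable, and $L/T$ is an extension of a subgroup of $S$ (hence solvable) by the finite group $(L\cap G_1)/T$.

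To conclude I invoke the standard fact that an extension of a solvable group by a finite normal subgroup is virtually solvable: the centralizer of the finite kernel has finite index because the automorphism group of the kernel is finite, and it is a central extension with abelian kernel and solvable quotient, hence itself solvable. Applied to $L/T$, this yields a solvable subgroup $P/T\subset L/T$ of finite index whose preimage $P$ is solvable and of finite index in $K$. The genuinely delicate ingredient here is the final paragraph, since the naive guess ``virtually solvable by virtually solvable'' needs precisely the normalizer trick together with the finite-kernel lemma above to be made rigorous.
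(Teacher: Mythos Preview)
The paper does not give its own proof here (it cites Cantat and explicitly remarks that the published argument contained a gap, later repaired), so I can only assess your argument on its own merits; it contains a genuine gap at the normalizer step. The assertion that ``$K'\cap G_1$ \ldots possesses only finitely many subgroups of index $[K'\cap G_1:T]$'' is a theorem for finitely generated groups, but nothing in the hypotheses forces $K'\cap G_1$ to be finitely generated, and the claim is false in general. For a concrete failure of your normalizer trick, set $N=K'\cap G_1=\bigl(\bigoplus_{i\in\Z}\Z/2\Z\bigr)\times A_5$ and $K'=N\rtimes\Z$, with $\Z$ acting by shift on the first factor and trivially on $A_5$; here $G_1=N$ is virtually abelian and $G_2=\Z$ is abelian, so both satisfy the Tits alternative. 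Taking $T=\{v:v_0=0\}\times\{1\}$ (abelian, index $120$ in $N$), the shifts send $T$ to the pairwise distinct subgroups $\{v:v_n=0\}\times\{1\}$, so $N_{K'}(T)=N$ has infinite index in $K'$ and your argument collapses.

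The repair is not to count subgroups but to choose $T$ canonically. Any virtually solvable group $N$ possesses a \emph{characteristic} solvable subgroup of finite index, its solvable radical $R(N)=\langle S: S\trianglelefteq N\text{ solvable}\rangle$: if $T_0\trianglelefteq N$ is solvable of index $m$ and derived length $d$, then every solvable normal subgroup of $N$ has derived length at most $d+\lfloor\log_2 m\rfloor$ (intersect it with $T_0$), so $R(N)$, being a directed union of finite products of such subgroups, is itself solvable of that bounded derived length; it is characteristic by construction and of finite index since $T_0\subset R(N)$. Replacing your $T$ by $R(K'\cap G_1)$ gives $T\trianglelefteq K'$ for free, after which your final paragraph (finite-by-solvable is virtually solvable, hence so is $K'/T$, hence so is $K'$) goes through unchanged.
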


\begin{remark}
	In the published version of the paper \cite{MR2811600} there is a gap in the proof of Theorem \ref{titsextension}. However, in the version of the paper on the website of the author \cite{cantat2012cremonawebsite} this gap has been filled and the proof is complete.
\end{remark}

\begin{lemma}\label{parabolictits}
	Let $G\subset\Cr_2(\C)$ be a subgroup that contains a parabolic element but no loxodromic element. Then $G$  satisfies the  Tits alternative.
\end{lemma}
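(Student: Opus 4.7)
The plan is to reduce via Lemma~\ref{noloxfibration} to two concrete ambient groups, and then to verify the Tits alternative for each of them by combining Tits' original theorem on linear groups in characteristic zero with the extension lemma (Theorem~\ref{titsextension}). Since the Tits alternative is inherited by subgroups by definition, it will suffice to treat the two ambient groups.

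First I would apply Lemma~\ref{noloxfibration} to conclude that, up to conjugation, either $G \subset \J \simeq \PGL_2(\C)\ltimes\PGL_2(\C(x))$ or $G \subset \aut(Y)$ for some Halphen surface $Y$. In the de Jonquières case, both $\PGL_2(\C)$ and $\PGL_2(\C(x))$ are quotients of linear groups over fields of characteristic zero, hence both satisfy the Tits alternative by Tits' theorem (\cite{MR0286898}). Theorem~\ref{titsextension} applied to the split exact sequence
\[
1 \to \PGL_2(\C(x)) \to \J \to \PGL_2(\C) \to 1
\]
then shows that $\J$ satisfies the Tits alternative, and therefore so does $G$.

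In the Halphen case I would invoke Theorem~\ref{halphenstructure} to produce a homomorphism $\rho\colon \aut(Y)\to\PGL_2(\C)$ with finite image, whose kernel $K$ is an extension of an abelian group of rank $\leq 8$ by a cyclic group of order dividing $24$. Abelian groups and finite groups are virtually solvable, so they trivially satisfy the Tits alternative; applying Theorem~\ref{titsextension} to the defining extension of $K$ gives that $K$ satisfies the Tits alternative. A second application of Theorem~\ref{titsextension} to
\[
1 \to K \to \aut(Y) \to \rho(\aut(Y)) \to 1,
\]
with $\rho(\aut(Y))$ finite (hence trivially satisfying the Tits alternative), yields that $\aut(Y)$ satisfies the Tits alternative, and once again the property descends to the subgroup $G$.

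There is no real obstacle beyond assembling these inputs; the only point to verify carefully is that ``satisfies the Tits alternative'' passes to subgroups (immediate from the definition, since any subgroup of a subgroup is a subgroup of the ambient group) and to finite-index overgroups via the extension lemma. Because Lemma~\ref{noloxfibration} exhausts the possibilities for a group containing a parabolic but no loxodromic element, this completes the proof.
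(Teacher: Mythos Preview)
Your proof is correct and follows essentially the same approach as the paper: reduce via Lemma~\ref{noloxfibration} to the de Jonqui\`eres and Halphen cases, then handle the former with Tits' theorem plus Theorem~\ref{titsextension} and the latter with Theorem~\ref{halphenstructure}. The only cosmetic difference is that in the Halphen case the paper observes directly that $\aut(Y)$ is virtually abelian (hence every subgroup is virtually solvable), whereas you route through two applications of Theorem~\ref{titsextension}; both are valid.
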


\begin{proof}
	By Lemma \ref{noloxfibration}, $G$ is either conjugate to a subgroup of $\J$ or to a subgroup of $\aut(X)$, where $\aut(X)$ is the automorphism group of a Halphen surface. In the first case, the Tits alternative follows from Theorem \ref{titsextension} and the Tits alternative for linear groups in characteristic zero, since $J\simeq \PGL_2(\C)\ltimes\PGL_2(\C(t))$. In the second case, $G$ is solvable up to finite index since the automorphism group of a Halphen surface is virtually abelian by Theorem \ref{halphenstructure}.
\end{proof}

\subsection{Proof of Theorem \ref{titscremona}}
	The Tits alternative  for groups of birational transformations of non-rational complex compact K\"ahler surfaces has already been shown in \cite{MR2811600}. So it is enough to show it for $\Cr_2(\C)$. Let $G\subset\Cr_2(\C)$ be a subgroup. If $G$ contains a loxodromic element, then, by Lemma \ref{loxodromictits}, $G$ is either conjugate to a subgroup of $\GL_2(\Z)\ltimes D_2$, in which case the the Tits alternative holds by Theorem~\ref{titsextension}, or $G$ is cyclic up to finite index, or $G$ contains a non-abelian free subgroup. Therefore, the Tits alternative holds for groups containing loxodromic elements.
	
	For the case in which $G$ contains a parabolic but no loxodromic element, the Tits alternative is proven in Lemma \ref{parabolictits}.
	
	Assume that all elements in $G$ are elliptic. We thus are in one of the cases of Theorem \ref{main}. All the groups from case (1) or case (3) are isomorphic to a bounded group and hence in particular linear groups (see \cite[Remark 2.21]{MR3092478}). Therefore they satisfy the Tits alternative. If $G$ is a group from case (2)  the Tits alternative follows from Theorem \ref{titsextension}. 
\qed

\section{Solvable subgroups of $\Cr_2(\C)$}
In this section we prove Theorem \ref{derivedlength}. Our starting point is the following Theorem due to D\'eserti. Since the proof in \cite{MR3499516} seems to contain a small gap (see Remark \ref{solvablegap}), we will briefly recall the arguments and give a complete proof.  Moreover, Theorem \ref{main} allows us to refine the original result for the case of groups of elliptic elements:

\begin{theorem}\label{solvabledeserti}
	Let $G\subset\Cr_2(\C)$ be a solvable subgroup, then one of the following is true:
	\begin{enumerate}
		\item $G$ is a subgroup of elliptic elements, and hence isomorphic to a solvable subgroup of $\J\simeq\PGL_2(\C)\ltimes\PGL_2(\C(t))$ or to a solvable subgroup of a bounded group, i.e.\,a solvable subgroup from one of the groups in Theorem~\ref{maxsubgroups}.
		\item $G$ is conjugate to a subgroup of $\J\simeq\PGL_2(\C)\ltimes\PGL_2(\C(t))$.
		\item $G$ is conjugate to a subgroup of the automorphism group of a Halphen surface.
		\item $G$ is conjugate to a subgroup of $\GL_2(\Z)\ltimes D_2$.
		\item There is a loxodromic element $f\in\Cr_2(\C)$ and a finite group $H\subset\Cr_2(\C)$ such that $G=\Z\ltimes H$.
	\end{enumerate}
\end{theorem}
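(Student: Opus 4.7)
The plan is to split into three mutually exclusive cases according to the types of isometries of $\h$ appearing in $G$, mirroring the trichotomy used in the proof of Theorem \ref{titscremona}. The new ingredient, compared with the Tits alternative, is that solvability will be used to rule out the free-subgroup possibility arising in Lemma \ref{loxodromictits}; everything else is essentially a repackaging of earlier results.

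Suppose first that $G$ consists of elliptic elements. Then we are in case (1) by definition, and Theorem \ref{main} refines the structure into either a bounded subgroup, a subgroup preserving a rational fibration (so conjugate into $\J$), or a torsion subgroup; in each of these situations the classification of maximal algebraic subgroups (Theorem \ref{maxsubgroups}), together with Theorem \ref{torsionthm} in the torsion case, provides the embedding into $\J$ or a bounded subgroup asserted in the statement. Next, suppose $G$ contains a parabolic element but no loxodromic element. Then Lemma \ref{noloxfibration} places $G$, up to conjugacy, inside either the de Jonqui\`eres group $\J$ (case (2)) or the automorphism group of a Halphen surface (case (3)).

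Finally, suppose $G$ contains a loxodromic element. Then Lemma \ref{loxodromictits} offers three alternatives: $G$ is conjugate into $\GL_2(\Z)\ltimes D_2$, which is case (4); $G$ contains a non-abelian free subgroup, which is impossible since $G$ is solvable; or $G$ has an index-at-most-two subgroup $G^0$ isomorphic to $\Z \ltimes H$ with $H$ finite, which is meant to yield case (5). Solvability is used only to eliminate the free-subgroup alternative, after which the three remaining possibilities exhaust the loxodromic case.

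The main obstacle I anticipate is the last step, namely matching the alternative $[G:G^0]\leq 2$ with the literal statement of case (5). If the orientation-reversing quotient $G/G^0$ is nontrivial, then $G$ is only virtually of the form $\Z \ltimes H$, and one must enlarge the finite factor to absorb the extra involution. The tool for this is Lemma \ref{nocommonfixgroup}, which plugs the gap flagged in Remark \ref{solvablegap} by ruling out the situation of two loxodromic elements of $G$ sharing a boundary fixed point but having distinct axes: this forces the orientation-reversing coset in $G\setminus G^0$ to act compatibly on $\Ax(f)$, and solvability then permits repackaging it into a finite normal subgroup. This essentially bookkeeping step, rather than any further geometric input, is where the remaining work lies.
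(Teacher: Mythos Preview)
Your approach is essentially identical to the paper's: the same trichotomy (elliptic / parabolic-but-no-loxodromic / loxodromic), invoking Theorem~\ref{main}, Lemma~\ref{noloxfibration}, and Lemma~\ref{loxodromictits} in the respective cases, with solvability used only to discard the free-subgroup alternative of Lemma~\ref{loxodromictits}. The paper's proof is in fact terser than yours; it simply writes ``the statement follows directly from Lemma~\ref{loxodromictits}'' in the loxodromic case.

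You are right to flag the mismatch between Lemma~\ref{loxodromictits}(2), which only produces an index-$\leq 2$ subgroup $G^0\simeq \Z\ltimes H$, and the literal wording of case~(5). The paper's proof does not address this point either; it is a minor imprecision in the statement (case~(5) should really read ``$G$ contains a subgroup of index at most~$2$ of the form $\Z\ltimes H$ with $H$ finite''). However, your proposed remedy via Lemma~\ref{nocommonfixgroup} is off target: that lemma concerns two loxodromic elements with \emph{distinct} axes, whereas in the situation of Lemma~\ref{loxodromictits}(2) all of $G$ preserves a single axis $\Ax(f)$, and the possible index-$2$ quotient comes from an orientation-reversing (hence elliptic) element. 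In that scenario $G$ can genuinely surject onto the infinite dihedral group $D_\infty$, which has no nontrivial finite normal subgroup and hence is not of the form $\Z\ltimes H$. So the discrepancy cannot be ``repackaged'' away as you suggest; one must simply accept the slightly weaker conclusion. For the only downstream use (bounding derived length in Theorem~\ref{derivedlength}), the extra factor of $2$ is harmless.
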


%
%

\begin{proof}
	Let $G\subset\Cr_2(\C)$ be a solvable subgroup. 
	
	{\bf Case 1.} $G$ contains a loxodromic element. In this case the statement follows directly from Lemma \ref{loxodromictits}.
	
	{\bf Case 2.} $G$ does not contain a loxodromic element, but $G$ contains a parabolic element. In this case $G$ is either a subgroup of the de Jonqui\`eres group $\J$, or $G$ is a subgroup of the automorphism group of a Halphen surface.
	
	{\bf Case 3.} $G$ is a group of elliptic elements and as such isomorphic to a subgroup of one of the groups from Theorem \ref{main}.
\end{proof}

We now use the classification of maximal algebraic subgroup (Theorem \ref{maxsubgroups}) to calculate the solvable length of bounded subgroups. Recall that if there is an exact sequence of groups
\[
1\to H_1\to G\to H_2\to 1.
\]
Then $G$ is solvable if and only if $H_1$ and $H_2$ are solvable. Moreover, the derived length of $G$ is at most the sum of the derived lengths of $H_1$ and of $H_2$. 

\begin{lemma}\label{boundedsolv}
The derived length of a bounded solvable subgroup of $\Cr_2(\C)$ is $\leq 5$.
\end{lemma}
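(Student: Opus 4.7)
The plan is to work case-by-case through the classification of maximal algebraic subgroups (Theorem \ref{maxsubgroups}), since every bounded subgroup of $\Cr_2(\C)$ is contained in one of the eleven listed groups. Throughout, I will use two standard tools: (a) if $1\to K\to H\to Q\to 1$ is exact and both $K,Q$ are solvable, then the derived length of $H$ is at most the sum of the derived lengths of $K$ and $Q$; (b) Newman's theorem \cite{MR0302781} that any solvable subgroup of $\GL_2(\C)$ (and hence of $\PGL_2(\C)$) has derived length $\leq 4$, together with Zassenhaus's general bound on derived length of solvable subgroups of $\GL_n(\C)$, which for $n=3$ gives $\leq 5$.

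First I handle the four infinite-dimensional families. Case (1): a solvable subgroup of $\PGL_3(\C)$ lifts to a solvable subgroup of $\GL_3(\C)$, whose derived length is $\leq 5$ by Zassenhaus. Case (2): $\aut(\p^1\times\p^1)\simeq(\PGL_2(\C))^2\rtimes\Z/2\Z$; since $\Z/2\Z$ is abelian, the commutator subgroup of any subgroup $G$ here lies in $(\PGL_2(\C))^2$, whose solvable subgroups have derived length $\leq 4$ (direct products preserve derived length), giving $\leq 5$. Case (3): $\aut(S_6)\simeq(\C^*)^2\rtimes(\s_3\times\Z/2\Z)$; the kernel is abelian and the quotient has derived length $2$, yielding $\leq 3$. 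Case (4): by the semidirect product structure of $\aut(\F_n)$ together with Lemma \ref{GLmodmu}, we have an extension of an abelian group by a subgroup of $\GL_2(\C)$ or $\PGL_2(\C)\times\C^*$, giving $\leq 1+4=5$.

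For the remaining cases I use the finer structural lemmas already proved. Case (5): Lemma \ref{semidirectexcept} embeds $\aut(S,\pi)$ into $\PGL_2(\C)\times\PGL_2(\C)$, so derived length $\leq 4$. Cases (6)-(10): the automorphism groups are finite of order at most $648$ (Theorem \ref{finitesubgroupsorder}); the explicit tables in \cite[\S10]{MR2641179} show that each solvable subgroup of such a group has derived length $\leq 4$. Case (11): the exact sequence $1\to(\Z/2\Z)^2\to\aut(S,\pi)\to H_V\to 1$ with $H_V$ a finite solvable subgroup of $\PGL_2(\C)$ (hence cyclic, dihedral, $\mathcal A_4$ or $\s_4$, all of derived length $\leq 3$) gives $\leq 1+3=4$. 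Taking the maximum over all cases yields the bound $5$.

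The main obstacle is pinning down the cases that actually achieve derived length close to $5$, namely (1), (2) and (4), where the argument relies on having a sharp-enough bound for solvable subgroups of $\GL_2(\C)$ and $\GL_3(\C)$. For $\GL_2(\C)$ this is Newman's explicit bound; for $\GL_3(\C)$ one checks by Lie--Kolchin that a connected solvable subgroup is triangularizable with derived length $\leq 3$, and then bounds the contribution of the finite component group using the structure of the Weyl group and of finite solvable subgroups of the normalizer of a maximal torus, keeping the total at $\leq 5$.
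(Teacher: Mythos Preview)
Your proof is correct and follows essentially the same case-by-case route through Theorem~\ref{maxsubgroups} as the paper, with the same key inputs (Newman's bounds, Lemma~\ref{GLmodmu}, Lemma~\ref{semidirectexcept}, and the del Pezzo tables). The only point worth flagging is your final paragraph: the Lie--Kolchin sketch for $\GL_3(\C)$ is incomplete as stated (the component group of a solvable linear group need not sit in the normalizer of a maximal torus), but this digression is unnecessary since the bound $\psi(\GL_3(\C))=5$ is already in Newman~\cite{MR0302781}, which the paper simply cites alongside the $\GL_2$ case.
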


\begin{proof}
	It is enough to consider solvable subgroups of the maximal algebraic subgroups of $\Cr_2(\C)$, i.e.\,the subgroups described in Theorem \ref{maxsubgroups}. Denote by $\psi(G)$ the maximal derived length of a solvable subgroup of a group $G$. In \cite{MR0302781} it is shown that $\psi(\GL_3(\C))=5$ and $\psi(\GL_2(\C))=4$. This implies that $\psi(\PGL_3(\C))\leq 5$ and $\psi(\PGL_2(\C))\leq 4$ and we obtain $\psi(\aut(\p^2))\leq 5, \psi(\aut(\p^1\times\p^1))=\psi((\PGL_2(\C)\times\PGL_2(\C))\rtimes\Z/2\Z)\leq 5$. Since the derived length of $\s_3$ is $2$, we obtain that $\psi(\aut(S_6))=3$. From Lemma \ref{GLmodmu} we deduce that $\psi(\aut(\F_n))\leq 5$ for all $n\geq 2$. Lemma \ref{semidirectexcept} implies that if $\pi\colon S\to\p^1$ is  an exceptional fibration, then $\psi(\aut(S,\pi))\leq 4$ and if $\pi\colon S\to\p^1$ is a $(\Z/2\Z)^2$-fibration, then $\psi(\aut(S,\pi))\leq 5$, by Theorem \ref{maxsubgroups}.
	
	So it remains to consider automorphism groups of del Pezzo surfaces of degree $\leq 5$, which are all finite. Let $S_5$ be the del Pezzo surface of degree $5$, then $\aut(S_5)=\s_5$ and hence $\psi(\aut(S_5)) \leq 3$. If $S_4$ is a del Pezzo surface of degree $4$, then $\aut(S_4)\simeq T\rtimes H_{\Delta}$, where $H_{\Delta}$ is isomorphic to $\Z/2\Z, \Z/4\Z, D_6$ or $D_{10}$ (see \cite[Section 3.4]{MR2504924}) and hence $\psi(\aut(S_4))\leq 3$. If $S_3$ is a del Pezzo surface of degree $3$ then $S_3$ is a cubic surface. A full list of all possible automorphism groups of cubic surfaces can be found in \cite[Table 4]{MR2641179}. One checks that in all the cases $\psi(\aut(S_3))\leq 3$. If $S_2$ is a del Pezzo surface of degree $2$ then $\aut(S_2)\simeq\Z/2\Z\times H_{S_\Delta}$, where $H_{S_\Delta}$ is a subgroup of $\aut(\p^2)$ (\cite[Section 3.6]{MR2504924}). We obtain $\psi(\aut(S_2))\leq 5$. The list of groups that can appear as automorphism groups of del Pezzo surfaces of degree $1$ can be found in \cite[Table 8]{MR2641179}. By checking all the groups that appear in the cited table, one obtains $\psi(\aut(S_1))\leq 4$ for all del Pezzo surfaces $S_1$ of degree $1$.
\end{proof}

\begin{proof}[Proof of Theorem \ref{derivedlength}]
	Let $G\subset\Cr_2(\C)$ be a solvable subgroup. 
	
	{\bf Case 1.} First we assume that $G$ contains a loxodromic element. By Theorem~\ref{solvabledeserti}, $G$ is either isomorphic to a subgroup of $\GL_2(\Z)\ltimes D_2$ or to a group of the form $\Z\ltimes H$, where $H\subset\Cr_2(\C)$ is finite and solvable. Every solvable subgroup of $\GL_2(\Z)$ has derived length $\leq 4$, hence in the first case, $G$ has derived length $\leq 5$. In the second case, by Lemma \ref{boundedsolv}, the derived length of $H$ is $\leq 5$ and hence the derived length of $G$ is $\leq 6$.
	
	{\bf Case 2.} In a next step we consider the case where $G$ does not contain a loxodromic element, but a parabolic element. In this case, $G$ is either isomorphic to a subgroup of the de Jonqui\`eres group $\J$ or to a subgroup of the automorphism group of a Halphen surface, by Lemma \ref{noloxfibration}. If $G$ is isomorphic to a subgroup of the de Jonqui\`eres group $\PGL_2(\C)\ltimes\PGL_2(\C(t))$  there exists a short exact sequence 
	\[
	1\to H_1\to G\to H_2\to 1,
	\]
	where $H_1\subset\PGL_2(\C(t))$ and $H_2\subset \PGL_2(\C)$. By  \cite{MR0302781}, the derived length of $H_1$ and $H_2$ is $\leq 4$ and hence the derived length of $G$ is $\leq 8$. If $G$ is isomorphic to a subgroup of the automorphism group of a Halphen surface $X$, then there exists, by Theorem \ref{halphenstructure}, a homomorphism $\rho\colon \aut(X)\to H$, where $H\subset\PGL_2(\C)$ is a finite solvable group and as such of derived length $\leq 4$. Moreover, $\ker(\rho)$ is an extension of an abelian group by a cyclic group of order dividing $24$. Hence the derived length of $G$ is $\leq 6$. 
	
	{\bf Case 3.} Finally, we consider the case where $G$ is a group of elliptic elements. By  Theorem \ref{main}, $G$ is either isomorphic to a subgroup of the de Jonquieres group or to a subgroup of a bounded group. In the first case we proceed as above, in the second case the claim is covered by Lemma \ref{boundedsolv}.
\end{proof}

\bibliographystyle{amsalpha}
\bibliography{/Users/christianurech/Dropbox/Literatur/bibliography_cu}

\end{document}